\newcommand{\R}{{\mathbb R}}
\newcommand{\N}{{\mathbb N}}
\newcommand{\Q}{{\mathbb Q}}
\newcommand{\EE}{\mathbb E}
\newcommand{\bEE}[1]{\mathbb E\bigl[ #1 \bigr]}
\newcommand{\bbbEE}[1]{\mathbb E\biggl[ #1 \biggr]}
\newcommand{\bbbbEE}[1]{\mathbb E\Biggl[ #1 \Biggr]}
\newcommand{\pp}[1]{(#1)}
\newcommand{\bp}[1]{\bigl(#1\bigr)}
\newcommand{\bbp}[1]{\Bigl(#1\Bigr)}
\newcommand{\bbbp}[1]{\biggl(#1\biggr)}
\newcommand{\bbbbp}[1]{\Biggl(#1\Biggr)}
\newcommand{\br}[1]{[#1]}
\newcommand{\bbr}[1]{\bigl[#1\bigr]}
\newcommand{\bbbr}[1]{\Bigl[#1\Bigr]}
\newcommand{\bbbbr}[1]{\biggl[#1\biggr]}
\newcommand{\bbbbbr}[1]{\Biggl[#1\Biggr]}
\newcommand{\PP}{{\mathbb P}}
\newcommand{\diff}{\mathrm{d}}
\newcommand{\1}{{\mathbbm 1}}
\newcommand{\eps}{\varepsilon}
\newcommand{\mc}[1]{\mathcal{#1}}
\newcommand{\mf}[1]{\mathfrak{#1}}
\newcommand{\norm}[1]{\lVert #1\rVert}
\newcommand{\bnorm}[1]{\bigl\lVert #1\bigr\rVert}
\newcommand{\bbnorm}[1]{\Bigl\lVert #1\Bigr\rVert}
\newcommand{\bbbnorm}[1]{\biggl\lVert #1\biggr\rVert}
\newcommand{\nnorm}[1]{{\lvert\kern-0.25ex\lvert\kern-0.25ex\lvert #1 
    \rvert\kern-0.25ex\rvert\kern-0.25ex\rvert}}
\newcommand{\ang}[1]{\langle #1 \rangle}
\newcommand{\abs}[1]{\lvert #1 \rvert}
\newcommand{\babs}[1]{\bigl\lvert #1 \bigr\rvert}
\newcommand{\bbabs}[1]{\Bigl\lvert #1 \Bigr\rvert}
\newcommand{\bbbabs}[1]{\biggl\lvert #1 \biggr\rvert}
\def\beq#1\eeq{\begin{equation}#1\end{equation}}
\def\ba#1\ea{\begin{equation}\begin{split}#1\end{split}\end{equation}}
\def\bm#1\em{\begin{multline}#1\end{multline}}
\theoremstyle{plain}
\newtheorem{theorem}{Theorem}[section]
\newtheorem{prop}[theorem]{Proposition}
\newtheorem{lemma}[theorem]{Lemma}
\newtheorem{cor}[theorem]{Corollary}
\theoremstyle{definition}
\newcommand{\opnorm}[1]{\norm{#1}_{\mathrm{op}}}
\newcommand{\qqandqq}{\qquad\text{and}\qquad}
\newcommand{\T}[1]{\marginpar{\textcolor{red}{#1}}}
\newcommand{\intrtype}[1]{#1}
\newcommand{\intrtypen}[1]{}
\newcommand{\ann}[2]{#1}
\newcommand{\com}[1]{\ignorespaces}
\newcommand{\is}{\leftarrow}
\newcommand{\id}{\mathrm{id}}
\newcommand{\extR}{[-\infty,\infty]}
\title{On the strong regularity of degenerate additive\\
noise driven stochastic differential equations \\
with respect to their initial~values}
\author{Arnulf Jentzen$^1$,
  Benno Kuckuck$^2$,\\
  Thomas M\"uller-Gronbach$^3$, 
  and 
  Larisa Yaroslavtseva$^{4}$
  \bigskip
  \\
  \small{$^1$Department of Mathematics, 
    ETH Zurich,}
  \\
    \small{e-mail: 
    arnulf.jentzen@sam.math.ethz.ch}
  \smallskip
  \\
  \small{$^2$Mathematical Institute,  
    Universit\"at D\"usseldorf,}
  \\
  \small{e-mail:
    kuckuck@math.uni-duesseldorf.de}
  \smallskip
  \\
  \small{$^3$Faculty of Computer Science and Mathematics, 
    Universit\"at Passau,}
  \\
    \small{e-mail: 
    thomas.mueller-gronbach@uni-passau.de} 
\smallskip
\\
\small{$^4$Faculty of Computer Science and Mathematics, 
  Universit\"at Passau,}
\\
\small{e-mail: 
  larisa.yaroslavtseva@uni-passau.de} 
}
\begin{document}

\maketitle
\begin{abstract}
 Recently in [M.~Hairer, M.~Hutzenthaler, and A.~Jentzen, \textit{Ann.\ Probab.\ 43}, 2 (2015), 468--527] 
 and [A.~Jentzen, T.~M\"uller-Gronbach, and L.~Yaroslavtseva, \textit{Commun.\ Math.\ Sci.\ 14}, 6 (2016), 1477--1500]
 stochastic differential equations (SDEs) 
 with smooth coefficient functions have  been constructed which have an 
 arbitrarily slowly converging modulus of continuity in the initial value.
 In these SDEs it is crucial that some of  the first order partial derivatives 
 of the drift coefficient functions grow at least exponentially and, 
 in particular, quicker than any polynomial. 
 However, in applications SDEs do typically have coefficient functions 
 whose  first order partial derivatives are polynomially bounded. 
 In this article we study  whether 
 arbitrarily bad regularity phenomena in the initial value may also arise 
 in the latter case
 and we partially answer this question in the negative.  
 More precisely, we show that every additive noise driven 
 SDE which admits a Lyapunov-type condition (which ensures the existence 
 of a unique solution of the SDE)
 and which has a drift coefficient function whose first order partial derivatives 
 grow at most polynomially
 is at least logarithmically H\"older continuous in the initial value. 
\end{abstract}
\newpage 

\tableofcontents

\section{Introduction} 

The regularity analysis of nonlinear stochastic differential equations (SDEs) 
with respect to their initial values is an active research topic in 
stochastic analysis (cf., e.g., \cite{CL14, CHJ13, FIZ07, HHJ15, HHM19, HHM192, Kry99, Li94, LS11, SS17, Zhang10} and the 
references mentioned therein). In particular, it has recently been revealed in the 
literature that there exist SDEs with smooth coefficient functions 
which have very poor regularity properties in the initial value.  
More precisely, it  has been shown  in~\cite{HHJ15}  that there exist 
additive noise driven SDEs with infinitely often differentiable 
drift coefficient functions which have a modulus of continuity in the 
initial value that converges to zero slower than with any polynomial rate. 
Moreover, in~\cite{JMGY16}  additive noise driven SDEs with infinitely 
often differentiable drift coefficient functions have been constructed 
which even  have an arbitrarily slowly converging modulus of continuity 
in the initial value.
In these SDEs it is crucial that the first order partial derivatives 
of the drift coefficient functions grow at least exponentially and, 
in particular, quicker than any polynomial. However, in applications 
SDEs do typically have coefficient functions whose  first order partial 
derivatives grow at most polynomially 
(cf., e.g.,~\cite{ ans96,  df12, h07c,  Leung1995,  Schmalfuss1997,  Schoeneretal1985, TimmerEtAl2000,  TornatoreBuccellatoVetro2005},  
\cite[Chapter 7]{kp92},  and \cite[Chapter 4]{HJ2015} for examples). 
In particular, in many applications the coefficient functions of the 
SDEs under consideration are polynomials 
(cf., e.g.,~\cite{ ans96,  df12, Schmalfuss1997, TimmerEtAl2000,  TornatoreBuccellatoVetro2005}, 
\cite[Chapter 7]{kp92}, and \cite[Chapter 4]{HJ2015}   for examples). 
In view of this, the natural question arises whether such arbitrarily 
bad regularity phenomena in the initial value may also arise in the case 
of SDEs with coefficient functions whose  first order partial 
derivatives grow at most polynomially. It is the subject of the main result 
of this article to partially answer this question in the negative.  
More precisely, the main result of this article, 
Theorem~\ref{thm:introduction} below, shows that every additive noise driven 
SDE which admits a 
Lyapunov-type condition (which ensures the existence of a unique solution of the SDE)
and which has a drift coefficient function whose  first order partial 
derivatives grow at most polynomially
is at least logarithmically H\"older continuous in the initial value. 

\begin{theorem}
  \label{thm:introduction}
  Let $d,m\in\N$, 
  $T,\kappa\in[0,\infty)$,
  $\alpha\in[0,2)$,
  $\mu\in C^1(\R^d,\R^d)$,
  $\sigma\in\R^{d\times m}$,
  $V\in C^1(\R^d,[0,\infty))$,
  let $\norm{\cdot}\colon\R^d\to[0,\infty)$ and $\nnorm\cdot\colon\R^m\to[0,\infty)$ be norms,
  assume for all 
    $x,h\in\R^d$, 
    $z\in\R^m$ 
  that
    $\norm{\mu'(x)h}\leq \kappa \bp{1+\norm x^\kappa} \norm h$,
    $V'(x)\mu(x+\sigma z)\leq \kappa(1+\nnorm{z}^\alpha)V(x)$,
    and $\norm x\leq V(x)$,
  let $(\Omega,\mc F,\PP)$ be a probability space,
  and let $W\colon[0,T]\times\Omega\to\R^m$ be a standard Brownian motion
    with continuous sample paths.
  Then
  \begin{enumerate}[label=(\roman{enumi}),ref=(\roman{enumi})]
  \item \label{enum:thmintroduction:1}
    there exist unique stochastic processes
    $X^x\colon [0,T]\times\Omega\to\R^d$, $x\in\R^d$, with continuous sample paths
    such that for all 
      $x\in\R^d$,
      $t\in[0,T]$,
      $\omega\in\Omega$
    it holds that
    \beq\label{eq:SDEintroduction}
    X^x(t,\omega)=x+\int_0^t\mu(X^x(s,\omega))\,\diff s+\sigma W(t,\omega)
    \eeq
    and
  \item \label{enum:thmintroduction:2}
    it holds for all 
      $R,q\in[0,\infty)$
      that there exists 
        $c\in(0,\infty)$ 
      such that for all 
        $x,y\in\{v\in \R^d\colon \norm v\leq R\}$ with $0<\norm{x-y}\neq 1$
      it holds that
      \beq
      \label{eq:mainresultintroduction}
        \sup_{t\in[0,T]}\bEE{\norm{X^x(t)-X^y(t)}}\leq c\,\babs{\ln(\norm{x-y})}^{-q}
        .
      \eeq
  \end{enumerate}
\end{theorem}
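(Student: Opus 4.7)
The crucial structural feature I plan to exploit is that the additive noise cancels in the difference: pathwise, for $\omega\in\Omega$,
\[
  X^x(t,\omega) - X^y(t,\omega) = x-y + \int_0^t \bp{\mu(X^x(s,\omega)) - \mu(X^y(s,\omega))}\,\diff s,
\]
so the difference satisfies an ordinary integral equation with no stochastic term. This permits a pathwise Gronwall estimate in which the noise enters only through the size of the reference paths $X^x,X^y$ themselves, and the Lyapunov hypothesis will be used to control these sizes.

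\textbf{Step 1: part (\ref{enum:thmintroduction:1}) and moment bounds.} I would first substitute $Y^x(t)=X^x(t)-\sigma W(t)$ so that, for each fixed $\omega$, $Y^x$ solves the random ODE $\tfrac{d}{dt}Y^x(t)=\mu(Y^x(t)+\sigma W(t))$. Local Lipschitz continuity of $\mu$ (from continuity of $\mu'$ together with its polynomial bound) yields local existence and uniqueness for each $\omega$. The Lyapunov hypothesis combined with the chain rule gives
\[
  \tfrac{d}{dt}V(Y^x(t)) = V'(Y^x(t))\mu(Y^x(t)+\sigma W(t)) \leq \kappa\bp{1+\nnorm{W(t)}^\alpha}V(Y^x(t)),
\]
whence Gronwall yields $V(Y^x(t))\leq V(x)\exp\bp{\kappa\int_0^T(1+\nnorm{W(s)}^\alpha)\,\diff s}$ for every $t\in[0,T]$. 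Since $\norm{\cdot}\leq V$, this precludes blowup and establishes (\ref{enum:thmintroduction:1}). Because $\alpha<2$, standard Gaussian estimates (e.g.\ Fernique together with Jensen applied to the time integral) show the exponential factor on the right-hand side has finite moments of every order, so for all $R,p\in[0,\infty)$ one obtains the a priori bound
\[
  \sup_{\norm{x}\leq R}\bbEE{\sup_{t\in[0,T]}\norm{X^x(t)}^p} < \infty.
\]

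\textbf{Step 2: pathwise difference bound.} Using the fundamental theorem of calculus together with the polynomial bound on $\mu'$ I would estimate
\[
  \norm{\mu(X^x(s))-\mu(X^y(s))} \leq \kappa\bp{1+(\norm{X^x(s)}+\norm{X^y(s)})^\kappa}\norm{X^x(s)-X^y(s)},
\]
and apply pathwise Gronwall to the integral equation for $X^x-X^y$. On the event $\mc A_M := \{\sup_{s\in[0,T]}(\norm{X^x(s)}+\norm{X^y(s)})\leq M\}$ this yields
\[
  \sup_{t\in[0,T]}\norm{X^x(t)-X^y(t)} \leq \norm{x-y}\exp\bp{\kappa T(1+M^\kappa)}.
\]

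\textbf{Step 3: cutoff, optimization, and the main obstacle.} Splitting the expectation over $\mc A_M$ and $\mc A_M^c$, I would bound the good-event contribution by the Gronwall estimate and the bad-event contribution via Cauchy--Schwarz, Markov's inequality, and Step~1, obtaining
\[
  \bEE{\norm{X^x(t)-X^y(t)}\1_{\mc A_M^c}} \leq C_p\, M^{-p/2}
\]
for arbitrarily large $p\in[0,\infty)$. Choosing $M=\bp{\beta\babs{\ln\norm{x-y}}/(\kappa T)}^{1/\kappa}$ with sufficiently small $\beta\in(0,1)$ makes the good-event contribution of order $\norm{x-y}^{1-\beta}$ and the bad-event contribution of order $\babs{\ln\norm{x-y}}^{-p/(2\kappa)}$; picking $p\geq 2\kappa q$ then yields \eqref{eq:mainresultintroduction}. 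The main obstacle---and the reason only a logarithmic modulus can be obtained---is precisely this balance: pathwise Gronwall in Step~2 produces an $\exp(CM^\kappa)$ factor, whereas the moment tail of Step~1 only supplies polynomial decay $M^{-p}$; no power of $\norm{x-y}$ with positive exponent can survive the optimization, and the two contributions can only be balanced at a logarithmic scale in $\norm{x-y}^{-1}$.
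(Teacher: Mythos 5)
Your argument is correct in substance and delivers \eqref{eq:mainresultintroduction}, but it takes a genuinely different route from the paper's for the regularity part~\ref{enum:thmintroduction:2}. The paper never runs a pathwise Gronwall argument on the difference $X^x-X^y$ directly; instead it differentiates in the initial value (Lemma~\ref{lem:existX}), writes $X^{x+h}(t)-X^x(t)$ through the derivative process via the fundamental theorem of calculus (Lemma~\ref{lem:distlem}), obtains the pathwise bound $\norm{X^{x+h}(t)-X^x(t)}\leq\norm{h}\,Z$ with $Z=\exp\bp{T\sup_{\norm{z}\leq R+1}\sup_{s}\varphi(X^z(s))}$, and then converts this into a moment estimate by the factorization $y=G(y)F(y)$ with $F(y)=\ln(1+y)$ and $G(y)=y/\ln(1+y)$, Cauchy--Schwarz, and a split along the \emph{random} event $\{Z\leq 1/\norm{h}\}$ (Lemma~\ref{lem:lem11}); only polynomial moments of $\ln Z$, i.e.\ of $\sup\varphi(X)$, enter. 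You instead exploit the additive-noise cancellation to get an integral equation for the difference of the two fixed solutions, run Gronwall pathwise with the random Lipschitz constant $\kappa\bp{1+(\norm{X^x(s)}+\norm{X^y(s)})^{\kappa}}$, truncate at a \emph{deterministic} level $M$, and balance $\norm{x-y}e^{\kappa T(1+M^\kappa)}$ against the Markov/Cauchy--Schwarz tail $C_pM^{-p/2}$ with $M\sim\abs{\ln\norm{x-y}}^{1/\kappa}$. Both proofs rest on the same two pillars (all polynomial moments of $\sup_t\norm{X^x(t)}$ from the Lyapunov condition and Fernique with $\alpha<2$, and an exponential Gronwall factor whose exponent is polynomial in the path supremum), and both can only be balanced at a logarithmic scale, as you note. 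Your route is more elementary: it needs no differentiability in the initial value, and it only requires $\sup_{\norm{x}\leq R}\EE\bbr{\sup_t\norm{X^x(t)}^p}<\infty$, whereas the paper's supremum over initial values \emph{inside} the expectation is what forces its joint-continuity statements and the measurability machinery of Section~\ref{section3}.

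Two routine points still need patching. First, your part~\ref{enum:thmintroduction:1} gives pathwise existence and uniqueness but not that $\Omega\ni\omega\mapsto X^x(t,\omega)$ is measurable, i.e.\ that $X^x$ is a stochastic process; the paper obtains this from pathwise convergence of Euler approximations (Lemma~\ref{lem:intmeas}), and some such argument is required. Second, your optimization handles small $\norm{x-y}$; for $\norm{x-y}$ near or above $1$ (allowed, since only $\norm{x-y}=1$ is excluded and $\norm{x-y}$ may be as large as $2R$) the bad-event bound degenerates, and one should finish with the trivial estimate $\sup_t\EE\bbr{\norm{X^x(t)-X^y(t)}}\leq 2\sup_{\norm{v}\leq R}\sup_t\EE\bbr{\norm{X^v(t)}}$ together with the fact that $\abs{\ln\norm{x-y}}^{-q}$ is bounded below on that range, exactly as in Lemma~\ref{lem:lnhoelder}; also treat the degenerate cases $\kappa=0$ and $T=0$ separately, since your choice of $M$ divides by $\kappa T$.
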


Theorem~\ref{thm:introduction} above is an immediate consequence of 
Theorem~\ref{thm:finalthm} in Subsection~\ref{subsec:subhoelder} below. 
Inequality~\eqref{eq:mainresultintroduction}  proves, roughly speaking, 
only H\"older continuity in the initial value in a logarithmic sense 
but does neither prove local Lipschitz continuity nor prove 
local H\"older continuity in the initial value in the
usual sense. In view of this, the question arises 
whether the statement of Theorem~\ref{thm:introduction} can be 
strengthened to ensure local H\"older continuity in the initial value in 
the usual sense. In a subsequent article we show that this is not the case 
and specify a concrete additive noise driven SDE which satisfies 
the hypotheses of Theorem~\ref{thm:introduction} but whose 
solution  fails for every arbitrarily small $\alpha\in (0,1]$ to be 
locally $\alpha$-H\"older continuous in the initial value.
Even more, we show that under the hypotheses of 
Theorem~\ref{thm:introduction} the upper bound 
in~\eqref{eq:mainresultintroduction} can not be substantially 
improved in general.

In the following we briefly sketch the key ideas of our proof of 
inequality~\eqref{eq:mainresultintroduction} in Theorem~\ref{thm:introduction}. 
A straightforward approach to estimating the expectation of the  
Euclidean distance between two solutions of the SDE~\eqref {eq:SDEintroduction} 
with different initial values (cf.\ the left hand side 
of~\eqref{eq:mainresultintroduction}) would be (i) to apply the fundamental 
theorem of calculus 
to the difference of the two solutions with the derivative 
being  taken with respect to the initial value, thereafter, 
(ii) to employ the triangle inequality to get the Euclidean norm 
inside of the Riemann integral which has appeared due to the application 
of the fundamental theorem of calculus, and, finally, 
(iii) to try to provide a finite upper bound for the expectation of the 
Euclidean operator norm of the derivative processes 
of solutions of~\eqref {eq:SDEintroduction} with respect to the initial value. 
This approach, however, fails to work in general under  the hypotheses 
of Theorem~\ref{thm:introduction}  as the derivative processes of solutions  
may have very poor integrability properties and, in particular, 
may have infinite absolute moments. 
A key idea in this article for overcoming the latter obstacle is to 
estimate the expectation of the Euclidean distance  between the  two solutions 
in terms of  the expectation of a new distance  between the  two solutions, 
which is  induced from a very slowly growing norm-type function.
As in the approach above, we then also apply the fundamental theorem of calculus 
to the difference of the two solutions. However, in the latter approach 
the derivative processes of solutions appear only inside of the argument 
of the very slowly growing norm-type function and the expectation of the 
resulting random variable is finite. We then estimate  the expectation of 
this random variable by employing properties of the derivative  processes 
of solutions  and the assumption that the first order partial derivatives 
of the   drift coefficient function grow at most polynomially and, thereby, 
finally establish
 inequality~\eqref{eq:mainresultintroduction}.

The remainder of this article is organized as follows. 
In Section~\ref{section2} we 
establish  an essentially well-known existence and 
uniqueness result for  perturbed ordinary differential
equations.  In Section~\ref{section3}
 we recall well-known  facts on 
measurability properties of function limits and  
in Section~\ref{section4} we establish  a well-known measurability result for 
solutions of additive noise driven SDEs.    
  In Section~\ref{section5} we prove   existence, 
uniqueness, and pathwise differentiability with respect to the 
initial value  and   in Section~\ref{section6} we present a few elementary 
integrability properties for solutions of additive noise driven SDEs 
with a drift coefficient function which admits a  Lyapunov-type condition. 
In Section~\ref{section7} we establish
an abstract regularity result for solutions of 
certain additive noise driven SDEs with respect to their initial values.  
This result together with the results of 
Sections \ref{section5} and \ref{section6} is  then used to prove 
the main result of this article, Theorem~\ref{thm:finalthm},  
in Section~\ref{section8}. 

\section{Existence of solutions of perturbed ordinary differential equations (ODEs)}\label{section2}

In this section we employ suitable Lyapunov-type functions to
establish in Lemma~\ref{lem:inteq} in Subsection~\ref{subsec:expodes} 
below an essentially well-known existence and 
uniqueness result for a certain class of perturbed ordinary differential
equations (ODEs).
Our proof of Lemma~\ref{lem:inteq} employs the essentially well-known a priori 
estimate in Lemma~\ref{lem:bndy} in Subsection~\ref{subsec:aprioripodes} below. 
Our proof of Lemma~\ref{lem:bndy} uses a suitable Lyapunov-type function 
(denoted by $V \colon \R^d \to \R $ in Lemma~\ref{lem:bndy} below).

\subsection{A priori estimates for solutions of perturbed ODEs}
\label{subsec:aprioripodes}
\begin{lemma}
  \label{lem:bndy}
  Let $d,m\in\N$,
  $T\in[0,\infty)$, 
  $\xi\in\R^d$, 
  $\mu\in C(\R^d,\R^d)$,
  $\sigma\in\R^{d\times m}$,
  $\varphi\in C(\R^m,[0,\infty))$,
  $V\in C^1(\R^d,[0,\infty))$,
  let $\norm{\cdot}\colon\R^d\to[0,\infty)$ be a norm,
  let $J\subseteq[0,T]$ be an interval, 
  assume that $0\in J$,
  and let $y\in C(J,\R^d)$,
  $w\in C([0,T],\R^m)$
  satisfy for all 
    $x\in\R^d$, 
    $u\in\R^m$,
    $t\in J$
  that
  $
  V'(x)\mu(x+\sigma u)\leq \varphi(u)V(x)
  $,
    $\norm x\leq V(x)$,
  and
  \beq
  \label{eq:assy1}
  y(t)=\xi+\int_0^t \mu(y(s))\,\diff s+\sigma w(t).
  \eeq
  Then it holds that
    $\sup_{t\in J}\bbr{\varphi(w(t))+\norm{\sigma w(t)}}<\infty$
  and
  \beq
  \label{eq:bndy}
  \sup_{t\in J}\,\norm{y(t)}\leq V(\xi)\exp\bbbp{ T\bbbbr{\sup_{s\in J} \varphi(w(s))} }+\bbbbr{\sup_{t\in J}\,\norm{\sigma w(t)}}.
  \eeq
\end{lemma}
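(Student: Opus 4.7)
The proof plan is to reduce the integral equation to a smooth object governed by the Lyapunov inequality, then apply Gronwall.

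First I would dispense with the finiteness claim. Since $w\in C([0,T],\R^m)$ and $J\subseteq[0,T]$, the image $w(J)$ is contained in the compact set $w([0,T])$. The continuity of $\varphi$ and of $u\mapsto\norm{\sigma u}$ therefore yields
\[
\sup_{t\in J}\bbr{\varphi(w(t))+\norm{\sigma w(t)}}\leq \sup_{t\in[0,T]}\bbr{\varphi(w(t))+\norm{\sigma w(t)}}<\infty.
\]

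Next I would introduce the \emph{detrended} process $z\colon J\to\R^d$ defined by $z(t)=y(t)-\sigma w(t)$. Substituting \eqref{eq:assy1} gives $z(t)=\xi+\int_0^t\mu(y(s))\,\diff s=\xi+\int_0^t\mu(z(s)+\sigma w(s))\,\diff s$. Since $\mu$ and $y$ are continuous, $z$ is continuously differentiable on $J$ with $z'(t)=\mu(z(t)+\sigma w(t))$ and $z(0)=\xi$. The chain rule together with the Lyapunov hypothesis $V'(x)\mu(x+\sigma u)\leq\varphi(u)V(x)$ (applied with $x=z(t)$, $u=w(t)$) then gives
\[
\tfrac{\diff}{\diff t}V(z(t))=V'(z(t))\mu(z(t)+\sigma w(t))\leq \varphi(w(t))\,V(z(t))
\]
for every $t\in J$.

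Now I would apply the standard (integral form of the) Gronwall inequality to the nonnegative continuous function $t\mapsto V(z(t))$. Denoting $\Phi=\sup_{s\in J}\varphi(w(s))\in[0,\infty)$, this yields for all $t\in J$
\[
V(z(t))\leq V(\xi)\exp\bbp{\textstyle\int_0^t\varphi(w(s))\,\diff s}\leq V(\xi)\exp(T\Phi).
\]
Finally I would combine the assumption $\norm\cdot\leq V$ with the triangle inequality: for every $t\in J$,
\[
\norm{y(t)}\leq \norm{z(t)}+\norm{\sigma w(t)}\leq V(z(t))+\norm{\sigma w(t)}\leq V(\xi)\exp(T\Phi)+\bbr{\sup_{s\in J}\norm{\sigma w(s)}},
\]
which is precisely \eqref{eq:bndy} after taking the supremum over $t\in J$.

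There is no real obstacle; the only point that requires a moment's care is the justification that $V\circ z$ is differentiable and that the chain rule applies, which hinges solely on $V\in C^1$ and the continuous differentiability of $z$ (the latter following from the continuity of $\mu\circ y$ combined with the integral representation of $z$). Note that the hypothesis $\alpha\in[0,2)$ or any integrability of $w$ is not needed here --- the argument is entirely pathwise and only uses that $\varphi(w(\cdot))$ is bounded on $J$.
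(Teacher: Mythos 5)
Your proposal is correct and follows essentially the same route as the paper's proof: detrend via $z=y-\sigma w$, apply the chain rule to $V\circ z$ together with the Lyapunov hypothesis, invoke Gronwall, and conclude with the triangle inequality and $\norm{\cdot}\leq V$. The only cosmetic difference is that the paper differentiates $V\circ z$ only on the open interior of $J$ and recovers the (possibly included) right endpoint by a continuity limit, whereas you work directly on $J$ with one-sided derivatives, which is equally valid.
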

\begin{proof}[Proof of Lemma~\ref{lem:bndy}]
  Throughout this proof
    assume w.l.o.g.\ that $\sup J>0$,
  let $I\subseteq[0,T]$ be the set which satisfies $I=(0,\sup J)$,
  let $K\in[0,\infty]$ satisfy
  \beq
    K=\sup_{s\in J}\varphi(w(s)),
  \eeq
  and let $z\colon J\to \R^d$ 
  \intrtype{be the function which satisfies }%
  \intrtypen{satisfy }%
  for all 
    $t\in J$ 
  that
  \beq
  \label{eq:defytilde}
    z(t)=y(t)-\sigma w(t)
    .
  \eeq
  Observe that
    the fact that $\varphi$ and $w$ are continuous functions
  ensures that
  \beq
    \label{eq:Kfin2}
    K
    \leq 
    \sup_{s\in[0,T]}\varphi(w(s))<\infty
    .
  \eeq
    This
    and the hypothesis that $w
      $ is a continuous function
  ensure that
  \beq
    \sup_{t\in J}\bbr{\varphi(w(t))+\norm{\sigma w(t)}}
    \leq
    \bbbbr{\sup_{t\in J}\varphi(w(t))}+\bbbbr{\sup_{t\in J}\,\norm{\sigma w(t)}}
    \leq K+\sup_{t\in [0,T]}\,\norm{\sigma w(t)}
    <
    \infty
    .
  \eeq
  Next note that
    \eqref{eq:assy1}
    and \eqref{eq:defytilde}
  imply that for all
    $t\in J$
  it holds that
  \beq
  \label{eq:zint}
    z(t)=\xi+\int_0^t \mu(y(s))\,\diff s
    .
  \eeq
    The hypothesis that $\mu$ and $y$ are continuous functions
    and the fundamental theorem of calculus
    hence 
  ensure that for all
    $t\in I$ 
  it holds that
    $z|_I\in C^1(I,\R^d)$ 
  and
  \beq
    (z|_I)'(t)=\mu(y(t))
    .
  \eeq
    This,
    the assumption that $V\in C^1(\R^d,[0,\infty))$,
    and the chain rule
  imply that for all
    $t\in I$
  it holds that
    $V\circ (z|_I)\in C^1(I,[0,\infty))$
  and
  \beq
  \label{eq:Vtydiff}
    (V\circ (z|_I))'(t)
    =
    V'(z(t))(z|_I)'(t)
    =
    V'(z(t))\mu(y(t))
    .
  \eeq
  Furthermore, note that
    the hypothesis that $V\in C^1(\R^d,[0,\infty))$
    and the hypothesis that $y$, $w$, and $\mu$ are continuous functions
  establish that
    $J\ni t\mapsto V'(z(t))\mu(y(t))\in\R$
  is a continuous function.
  Combining
    this
    and \eqref{eq:Vtydiff}
  with
    the fundamental theorem of calculus
    and the fact that $z(0)=\xi$
  shows that for all
    $t\in I$ 
  it holds that
  \ba
    V(z(t))&=\bbr{ V(z(s)) }_{s=0}^{s=t} + V(z(0))\\
    &=\int_0^t V'(z(s))\mu(y(s))\,\diff s + V(\xi)\\
    &=\int_0^t V'(z(s))\mu(z(s)+\sigma w(s))\,\diff s + V(\xi)
    .
  \ea
    The hypothesis that 
      for all 
        $x\in\R^d$, 
        $u\in\R^m$ 
      it holds that
        $V'(x)\mu(x+\sigma u)\leq \varphi(u)V(x)$
    and \eqref{eq:Kfin2}
    hence
  prove that for all
    $t\in I$
  it holds that
  \beq
    \label{eq:Vgw}
    V(z(t))
    \leq\int_0^t \varphi(w(s))V(z(s))\,\diff s + V(\xi)
    \leq \int_0^t KV(z(s))\,\diff s + V(\xi)
    .
  \eeq
  %
    The assumption that $\sup J>0$
    and the fact that $J\in t\mapsto V(z(t))\in[0,\infty)$ is a continuous function
    therefore
  imply that for all
    $u\in\{s\in J\colon s=\sup J\}$
  it holds that
  \ba
    V(z(u))
    &=
    \limsup_{t\nearrow u}V(z(t))
    \\&\leq
    \limsup_{t\nearrow u}\bbbbr{\int_0^t KV(z(s))\,\diff s + V(\xi)}
    \\&=
    \int_0^u KV(z(s))\,\diff s + V(\xi)
    .
  \ea
    This,
    \eqref{eq:Vgw},
    and the fact that $V(z(0))=V(\xi)$
  demonstrate that for all
    $t\in J$
  it holds that
  \beq
    \label{eq:VgwJ}
    V(z(t))
    \leq 
    \int_0^t KV(z(s))\,\diff s + V(\xi)
    .
  \eeq
  Combining
    this
    and \eqref{eq:Kfin2}
  with
    Gronwall's integral inequality
      (see, e.g., Grohs et al.~\cite[Lemma~2.11]{GHJW}
        (with
          $\alpha\is V(\xi)$,
          $\beta\is K$,
          $T\is t$,
          $f\is ([0,t]\ni s\mapsto V(z(s))\in[0,\infty))$
        for
        $t\in J$
        in the notation of Grohs et al.~\cite[Lemma~2.11]{GHJW})%
      )
  proves that for all 
    $t\in J$ 
  it holds that
  \beq
    V(z(t))\leq V(\xi)\exp(tK)\leq V(\xi)\exp(TK)
    .
  \eeq
  %
    The triangle inequality
    and the hypothesis that for all $x\in\R^d$ it holds that $\norm{x}\leq V(x)$
    hence 
  establish that
  \ba
  \label{eq:supy}
    \sup_{t\in J}\,\norm{y(t)}
    &= \sup_{t\in J}\,\norm{z(t)-\sigma w(t)}\\
    &\leq \sup_{t\in J} \bbr{\norm{z(t)}+\norm{\sigma w(t)}}\\
    &\leq \bbbbr{\sup_{t\in J}\, \norm{z(t)}}+\bbbbr{\sup_{t\in J}\,\norm{\sigma w(t)}}\\
    &\leq \bbbbr{\sup_{t\in J} V(z(t))}+\bbbbr{\sup_{t\in J}\,\norm{\sigma w(t)}}\\
    &\leq V(\xi)\exp(TK)+\bbbbr{\sup_{t\in J}\,\norm{\sigma w(t)}}
    .
  \ea
  The proof of Lemma~\ref{lem:bndy} is thus completed.
\end{proof}

\subsection{Existence of solutions of perturbed ODEs}
\label{subsec:expodes}

\begin{lemma}
  \label{lem:inteq}
  Let $d,m\in\N$,
  $T\in[0,\infty)$, 
  $\xi\in\R^d$, 
  $\sigma\in\R^{d\times m}$,
  $\varphi\in C(\R^m,[0,\infty))$,
  $V\in C^1(\R^d,[0,\infty))$,
  $w\in C([0,T],\R^m)$,
  let $\norm{\cdot}\colon\R^d\to[0,\infty)$ be a norm,
  let $\mu\colon\R^d\to\R^d$ be a locally Lipschitz continuous function,
  and assume for all $x\in\R^d$, $z\in\R^m$ that
  $
  V'(x)\mu(x+\sigma z)\leq \varphi(z)V(x)
  $
  and $\norm x\leq V(x)$%
  .
  Then there exists a unique $y\in C([0,T],\R^d)$
  such that for all $t\in[0,T]$ it holds that
  \beq
  \label{eq:inteqconc}
  y(t)=\xi+\int_0^t \mu(y(s))\,\diff s+\sigma w(t).
  \eeq
\end{lemma}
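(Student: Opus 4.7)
The plan is to reduce the integral equation~\eqref{eq:inteqconc} to a classical non-autonomous ordinary differential equation by removing the non-smooth noise path. Concretely, set $z := y - \sigma w$ and observe that $y\in C([0,T],\R^d)$ satisfies~\eqref{eq:inteqconc} if and only if $z\in C([0,T],\R^d)$ satisfies $z(0)=\xi$ and
\[
z(t) = \xi + \int_0^t \mu(z(s) + \sigma w(s))\,\diff s \qquad\text{for all } t\in[0,T].
\]
The driving field $g(t,x) := \mu(x+\sigma w(t))$ is jointly continuous in $(t,x)$ by continuity of $w$ and $\mu$, and locally Lipschitz in $x$ uniformly for $t\in[0,T]$, because $\sigma w([0,T])$ is a compact subset of $\R^d$ and the local Lipschitz constant of $\mu$ on any bounded set is finite. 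Thus $g$ falls squarely into the scope of standard Picard--Lindel\"of theory for non-autonomous ODEs.

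Classical Picard--Lindel\"of theory then yields a unique maximal continuous solution $z\colon J\to\R^d$ of the transformed ODE on some interval $J\subseteq[0,T]$ with $0\in J$ and $\sup J>0$, with the dichotomy that either $J=[0,T]$, or $\sup J<T$ and $\limsup_{t\nearrow \sup J}\norm{z(t)}=\infty$. Setting $y := z+\sigma w$ on $J$, the assumptions of the present lemma match exactly the hypotheses of Lemma~\ref{lem:bndy}, which therefore provides the a priori bound
\[
\sup_{t\in J}\norm{y(t)} \leq V(\xi)\exp\bbp{T \sup_{s\in[0,T]} \varphi(w(s))} + \sup_{t\in[0,T]}\norm{\sigma w(t)} < \infty,
\]
where finiteness of the right-hand side follows from continuity of $\varphi\circ w$ and $\sigma w$ on the compact set $[0,T]$. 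Consequently $z = y-\sigma w$ is also bounded on $J$, which rules out the blow-up alternative and forces $\sup J = T$. A routine closure argument at the right endpoint---the integrand $\mu(z(\cdot)+\sigma w(\cdot))$ is bounded on $J$, so the defining integral extends continuously to $\sup J$---then gives $J=[0,T]$.

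Uniqueness is inherited from the local uniqueness part of Picard--Lindel\"of for the transformed ODE: if $y_1,y_2\in C([0,T],\R^d)$ both solve~\eqref{eq:inteqconc}, then $z_i := y_i-\sigma w$ both solve the transformed ODE with the same initial value $\xi$, and the set $\{t\in[0,T]\colon z_1(t)=z_2(t)\}$ is nonempty, closed, and (by local uniqueness around any common point) open, hence equal to $[0,T]$. The only substantive point in the entire argument is the use of the Lyapunov inequality through Lemma~\ref{lem:bndy} to preclude finite-time blow-up; the rest is routine ODE theory, greatly simplified by the fact that the noise enters additively and can be absorbed into the unknown.
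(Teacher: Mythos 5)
Your proof is correct, and its skeleton coincides with the paper's: obtain a maximal local solution together with a blow-up alternative, invoke Lemma~\ref{lem:bndy} (the Lyapunov-type a priori bound) to exclude blow-up, extend to $[0,T]$, and then argue uniqueness. The difference lies in how the local theory is obtained. The paper does not transform the equation; it cites an external existence/uniqueness theorem for perturbed integral equations (\cite[Theorem~8.3]{JentzenMazzonettoSalimova}, with the forcing $o(t)=\xi+\sigma w(t)$) to get the maximal solution and blow-up criterion, and then \cite[Lemma~8.1]{JentzenMazzonettoSalimova} to perform the extension to the closed interval. You instead absorb the additive noise via $z=y-\sigma w$, note that $g(t,x)=\mu(x+\sigma w(t))$ is continuous and locally Lipschitz in $x$ uniformly in $t$ (using compactness of $\sigma w([0,T])$), and run classical Picard--Lindel\"of, supplying the endpoint closure and the open-closed connectedness argument for uniqueness by hand. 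Your route is more elementary and self-contained (no black-box citations beyond standard ODE theory), at the cost of writing out the continuation and uniqueness details that the cited results package for the paper; interestingly, the same subtraction $z=y-\sigma w$ is exactly the device the paper itself uses inside the proofs of Lemma~\ref{lem:bndy} and Lemma~\ref{lem:diffy}, so your argument is very much in the spirit of the paper. The one substantive step --- ruling out finite-time blow-up through the Lyapunov hypothesis via Lemma~\ref{lem:bndy} --- is identical in both proofs, and your application of that lemma (with $J$ the maximal interval and the supremum of $\varphi\circ w$ taken over $[0,T]$) is legitimate.
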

\begin{proof}[Proof of Lemma~\ref{lem:inteq}]
  Throughout this proof 
    assume w.l.o.g.\ that $T>0$.
  Note that 
    the hypothesis that $\mu$ is a locally Lipschitz continuous function,
    the hypothesis that $w$ is a continuous function,
    and~\cite[Theorem~8.3]{JentzenMazzonettoSalimova}
      (with
        $(V,\norm{\cdot}_V)\is (\R^d,\norm{\cdot})$,
        $(W,\norm{\cdot}_W)\is (\R^d,\norm{\cdot})$,
        $T\is T$,
        $F\is \mu$,
        $S\is \pp{(0,T)\ni t\mapsto \id_{\R^d}\in L(\R^d,\R^d)}$,
        $\mc S\is \pp{[0,T]\ni t\mapsto \id_{\R^d}\in L(\R^d,\R^d)}$,
        $o\is \pp{[0,T]\ni t\mapsto \xi+\sigma w(t)\in\R^d}$,
        $\phi\is \pp{(0,T)\ni t\mapsto t\in(0,\infty)}$
      in the notation of~\cite[Theorem~8.3]{JentzenMazzonettoSalimova})
  ensure that
    there exists an interval $J\subseteq[0,T]$ 
    with 
      $0\in J$
      and $\sup J>0$
      such that 
        there exists a unique $x\in C(J,\R^d)$ which satisfies
          for all 
            $t\in J$
          that
          \beq
          \label{eq:defy}
            x(t)=\xi+\int_0^t \mu(x(s))\,\diff s+\sigma w(t)
            \qqandqq 
            \limsup_{s\nearrow\sup J} \bbr{ (T-s)^{-1} + \norm{x(s)} }=\infty
            .
          \eeq
    Lemma~\ref{lem:bndy}
    hence 
  proves that $\sup_{t\in J}[\varphi(w(s))+\norm{\sigma w(t)}]<\infty$ and
  \beq
  \label{eq:supxfin}
    \sup_{t\in J}\,\norm{x(t)}\leq V(\xi)\exp\bbbp{ T\bbbbr{\sup_{s\in J} \varphi(w(s))} }+\bbbbr{\sup_{t\in J}\,\norm{\sigma w(t)}}<\infty.
  \eeq
  Combining 
    this
  with 
    \eqref{eq:defy} 
  ensures that
    $\sup J=T$.
  Therefore, we obtain that 
    $J=[0,T)$ or $J=[0,T]$.
    This,
    the hypothesis that $\mu$ is a locally Lipschitz continuous function,
    \eqref{eq:supxfin},
    and~\cite[Lemma~8.1]{JentzenMazzonettoSalimova}
      (with
        $(V,\norm{\cdot}_V)\is (\R^d,\norm{\cdot})$,
        $(W,\norm{\cdot}_W)\is (\R^d,\norm{\cdot})$,
        $T\is T$,
        $\tau\is T$,
        $x\is x|_{[0,T)}$,
        $o\is \pp{[0,T]\ni t\mapsto \xi+\sigma w(t)\in\R^d}$,
        $F\is\mu$,
        $S\is \pp{(0,T)\ni t\mapsto \id_{\R^d}\in L(\R^d)}$,
        $\phi\is \pp{(0,T)\ni t\mapsto t\in(0,\infty)}$
      in the notation of~\cite[Lemma~8.1]{JentzenMazzonettoSalimova})
  prove that 
    there exists a continuous function $y\colon[0,T]\to\R^d$
      such that it holds for all 
        $t\in[0,T]$ 
      that
      \beq
      \label{eq:yex}
        y|_{[0,T)}=x\qqandqq y(t)=\xi+\int_0^t\mu(y(s))\,\diff s+\sigma w(t).
      \eeq
  In the next step we observe that 
    \eqref{eq:defy}
    and the fact that $\sup J=T$
  show that for all 
    $z\in\bigl\{u\in C([0,T],\R^d)\colon \bp{\forall\, t\in[0,T]\colon u(t)=\xi+\int_0^t\mu(u(s))\,\diff s+\sigma w(t)}\bigr\}$
  it holds that 
    $z|_J=x$.
    The fact that $y$ is a continuous function 
    and the fact that $y|_{[0,T)}=x$ 
    hence 
  demonstrate that for all
    $z\in\bigl\{u\in C([0,T],\R^d)\colon \bp{\forall\, t\in[0,T]\colon u(t)=\xi+\int_0^t\mu(u(s))\,\diff s+\sigma w(t)}\bigr\}$
  it holds that 
    $z=y$.
  Combining
    this
  with
    \eqref{eq:yex}
  establishes 
    \eqref{eq:inteqconc}. 
  The proof of Lemma~\ref{lem:inteq} is thus completed.
\end{proof}

\section{Measurability properties}\label{section3}

In this section we recall in Lemmas~\ref{lem:sup_measurability}--\ref{lem:diffmeas} 
in Subsection~\ref{subsec:measfunc} and in
Lemmas~\ref{lem:supmeas1} and \ref{lem:supmeas} in Subsection~\ref{subsec:measproc}
below a few well-known facts on 
measurability properties of suitable function limits. 
For completeness we also include in this section proofs 
for Lemmas~\ref{lem:sup_measurability}--\ref{lem:supmeas}.

\subsection{Measurability properties for functions}
\label{subsec:measfunc}

\begin{lemma}
\label{lem:sup_measurability}
Let $ ( \Omega, \mathcal{F} ) $
be a measurable space,
let $I$ be a non-empty and at most countable set,
let $ Y \colon \Omega \to \extR $
be a function, and
let
$ X_i \colon \Omega \to \extR $,
$ i \in I $,
be
$ \mathcal{F} 
$/$ \mathcal{B}( \extR ) 
$-measurable
functions which satisfy for all
$ \omega \in \Omega $
that
\begin{equation}
  Y( \omega ) =
  \sup_{ i \in I } X_i( \omega )
  .
\end{equation}
Then it holds that 
$ Y $ is an
$ \mathcal{F} $/$ \mathcal{B}( \extR ) 
$-measurable function.
\end{lemma}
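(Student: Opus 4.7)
The plan is to use the standard characterisation of Borel measurability on the extended real line via the generating family $\{(a,\infty]: a\in\R\}$ (or equivalently $\{[-\infty,a]:a\in\R\}$). Concretely, I would first recall that $\mc B(\extR)$ is generated by the half-open rays $(a,\infty]$ with $a\in\R$, so to prove $\mc F$/$\mc B(\extR)$-measurability of $Y$ it suffices to show that $Y^{-1}((a,\infty])\in\mc F$ for every $a\in\R$.

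Next I would compute this preimage explicitly. For any $\omega\in\Omega$ and $a\in\R$, by the definition of the supremum one has $\sup_{i\in I}X_i(\omega)>a$ if and only if there exists some $i\in I$ with $X_i(\omega)>a$. This translates into the set identity
\beq
  Y^{-1}((a,\infty])=\bigl\{\omega\in\Omega: \sup_{i\in I}X_i(\omega)>a\bigr\}=\bigcup_{i\in I}X_i^{-1}((a,\infty]).
\eeq
Since each $X_i$ is $\mc F$/$\mc B(\extR)$-measurable and $(a,\infty]\in\mc B(\extR)$, every set $X_i^{-1}((a,\infty])$ lies in $\mc F$. Because $I$ is at most countable, the displayed union is a countable union of $\mc F$-measurable sets and hence belongs to $\mc F$.

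Combining the two steps, $Y^{-1}((a,\infty])\in\mc F$ for every $a\in\R$, and since $\{(a,\infty]:a\in\R\}$ generates $\mc B(\extR)$, standard measure-theoretic reasoning (preimage under $Y$ of a generator lies in $\mc F$ implies preimage of any Borel set lies in $\mc F$) yields the $\mc F$/$\mc B(\extR)$-measurability of $Y$. There is no real obstacle here; the only point that must be handled with care is to argue on the extended real line rather than on $\R$ (so that $+\infty$ is included in the generating rays) and to invoke the countability of $I$ explicitly to justify the countable union.
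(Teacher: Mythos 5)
Your proof is correct and follows essentially the same route as the paper: both reduce measurability to a generating family of rays in $\mc B(\extR)$ and use the countability of $I$, you via $\{Y>a\}=\bigcup_{i\in I}\{X_i>a\}$ and the paper via the dual identity $\{Y\leq c\}=\bigcap_{i\in I}\{X_i\leq c\}$. This is only a cosmetic difference (upper rays and countable unions versus lower rays and countable intersections), so no further comparison is needed.
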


\begin{proof}[Proof
of Lemma~\ref{lem:sup_measurability}]
Note that 
  the hypothesis that 
    for all 
      $i\in I$
    it holds that
      $X_i$ is an $\mc F$/$\mc B(\extR)$-measurable function
  and the hypothesis that $I$ is at most countable
establish that
for all $ c \in \R $
it holds that
\begin{equation}
  \left\{ 
    Y \leq c
  \right\}
=
  \left\{ 
    \sup_{ i \in I } X_i \leq c
  \right\}
=
  \bigcap_{ i \in I }
  \underbrace{
  \left\{ 
    X_i \leq c
  \right\}
  }_{ \in \mathcal{F} }
\in 
  \mathcal{F}
  .
\end{equation}
The proof of
Lemma~\ref{lem:sup_measurability}
is thus completed.
\end{proof}

\begin{lemma}
\label{lem:convergence_measurability}
Let $ ( \Omega, \mathcal{F} ) $
be a measurable space,
let $ Y \colon \Omega \to \R $
be a function, and let
$ X_n \colon \Omega \to \R $,
$ n \in \N $,
be a sequence of 
$ \mathcal{F} 
$/$ \mathcal{B}( \R ) $-measurable
functions which satisfies 
for all
$ \omega \in \Omega $ 
that
$
  \limsup_{ n \to \infty } 
  | 
    X_n( \omega )
  - 
    Y( \omega )
  |
  = 0
$.
Then it holds that 
$ Y $ is an
$ \mathcal{F} $/$ \mathcal{B}( \R ) 
$-measurable function.
\end{lemma}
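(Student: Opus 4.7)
The plan is to reduce the claim to Lemma~\ref{lem:sup_measurability}, which handles measurability of countable suprema of $\extR$-valued measurable functions. The key observation is that pointwise convergence lets us rewrite $Y$ as a $\limsup$ (equivalently, $\liminf$) of the sequence $X_n$, which is a countable infimum of countable suprema, and both operations preserve measurability with respect to $\mc B(\extR)$.

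First, I would argue that each $X_n$, viewed as a function with codomain $\extR$, is $\mc F/\mc B(\extR)$-measurable; this follows because the canonical inclusion $\R\hookrightarrow\extR$ is Borel-measurable, so $X_n$ composed with the inclusion is measurable. Next, from the hypothesis $\limsup_{n\to\infty}\abs{X_n(\omega)-Y(\omega)}=0$ I would conclude that for every $\omega\in\Omega$ the sequence $(X_n(\omega))_{n\in\N}$ converges in $\R$ to $Y(\omega)$, so in particular
\[
Y(\omega)=\limsup_{n\to\infty}X_n(\omega)=\inf_{n\in\N}\,\sup_{k\in\N\cap[n,\infty)}X_k(\omega).
\]

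Then I would apply Lemma~\ref{lem:sup_measurability} (with $I=\N\cap[n,\infty)$, which is at most countable) to deduce that for each fixed $n\in\N$ the function $\omega\mapsto\sup_{k\geq n}X_k(\omega)$ is $\mc F/\mc B(\extR)$-measurable. Writing $\inf_{n}a_n=-\sup_{n}(-a_n)$ and observing that the map $a\mapsto -a$ on $\extR$ is Borel-measurable, a second application of Lemma~\ref{lem:sup_measurability} (with $I=\N$) shows that $Y$, considered as a function $\Omega\to\extR$, is $\mc F/\mc B(\extR)$-measurable.

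Finally, I would upgrade this to $\mc F/\mc B(\R)$-measurability. Since $Y(\Omega)\subseteq\R$ and the trace $\sigma$-algebra $\mc B(\extR)|_{\R}$ coincides with $\mc B(\R)$, for every $B\in\mc B(\R)$ one has $Y^{-1}(B)=Y^{-1}(B\cap\R)\in\mc F$, which completes the argument. The only potential subtlety here is the passage between the $\extR$-valued and $\R$-valued frameworks, but since $Y$ is real-valued by hypothesis this is immediate from the definition of the trace $\sigma$-algebra; there is no genuine obstacle beyond bookkeeping.
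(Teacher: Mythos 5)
Your proof is correct and follows essentially the same route as the paper: both express $Y$ pointwise as $\limsup_{n\to\infty} X_n=\inf_{n\in\N}\sup_{m\geq n}X_m$ and invoke Lemma~\ref{lem:sup_measurability} to handle the countable tail suprema. The only difference is cosmetic: the paper disposes of the infimum over $n$ (and of the passage from $\extR$-valued suprema back to the real-valued $Y$) in one stroke by checking $\{Y\geq c\}=\bigcap_{n\in\N}\bigl\{\sup_{m\in\{n,n+1,\dots\}}X_m\geq c\bigr\}\in\mathcal F$ for all $c\in\R$, whereas you use the negation trick with a second application of Lemma~\ref{lem:sup_measurability} plus a trace-$\sigma$-algebra argument; both steps are routine and valid.
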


\begin{proof}[Proof
of Lemma~\ref{lem:convergence_measurability}]
First, observe that
  the assumption that
    for all
      $\omega\in\Omega$
    it holds that
      $\limsup_{n\to\infty}\abs{X_n(\omega)-Y(\omega)}=0$
implies that
  for all 
    $\omega\in\Omega$ 
  it holds that
    $\N\ni n\mapsto X_n(\omega)\in\R$ is a convergent sequence
    and
    \beq
    \label{eq:lim}
    \lim_{n\to\infty} X_n(\omega)=Y(\omega)
    .
    \eeq
Moreover, note that 
  Lemma~\ref{lem:sup_measurability}
ensures that
for all $n\in\N$ it holds that
$\Omega\ni\omega\mapsto \sup_{m\in\{n,n+1,\dots\}}X_m(\omega)\in\extR$
is an $\mc F$/$\mc B(\extR)$-measurable function.
  This
  and \eqref{eq:lim}
  show that
for all $ c \in \R $
it holds that
\begin{equation}
\begin{split}
&
  \left\{
    Y \geq c
  \right\}
=
  \left\{
    \lim_{ n \to \infty } X_n \geq c
  \right\}
=
  \biggl\{
    \limsup_{ n \to \infty } X_n \geq c
  \biggr\}
\\ & =
  \biggl\{
    \lim_{ n \to \infty }
    \bbbbr{
      \sup_{ m \in \{ n, n + 1 , \dots \} }
      X_m 
    }
    \geq c
  \biggr\}
=
  \bigcap_{ n \in \N }
  \underbrace{
  \biggl\{
    \bbbbr{
      \sup_{ m \in \{ n, n + 1 , \dots \} }
      X_m 
    }
    \geq c
  \biggr\}
  }_{
    \in \mathcal{F}
  }
  \in \mathcal{F}
  .
\end{split}
\end{equation}
The proof of Lemma~\ref{lem:convergence_measurability}
is thus completed.
\end{proof}

\begin{lemma}
\label{lem:convergence_measurability_d}
Let $ ( \Omega, \mathcal{F} ) $
be a measurable space,
let $d\in\N$,
let $\norm{\cdot}\colon\R^d\to [0,\infty)$ be a norm,
let $ Y \colon \Omega \to \R^d $
be a function, and let
$ X_n \colon \Omega \to \R^d $,
$ n \in \N $,
be a sequence of 
$ \mathcal{F} 
$/$ \mathcal{B}( \R^d ) $-measurable
functions which satisfies 
for all
$ \omega \in \Omega $ 
that
$
  \limsup_{ n \to \infty } 
  \norm{
    X_n( \omega )
  - 
    Y( \omega )
  }
  = 0
$.
Then it holds that 
$ Y $ is an
$ \mathcal{F} $/$ \mathcal{B}( \R^d ) 
$-measurable function.
\end{lemma}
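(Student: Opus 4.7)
The plan is to reduce the $d$-dimensional statement to the one-dimensional statement already established in Lemma~\ref{lem:convergence_measurability} by looking at coordinates. The key observation is that although the norm $\norm{\cdot}$ appearing in the hypothesis is arbitrary, all norms on the finite-dimensional vector space $\R^d$ are equivalent, so pointwise norm-convergence of $X_n$ to $Y$ automatically forces coordinate-wise convergence in $\R$.

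First I would introduce the coordinate projections $\pi_i\colon\R^d\to\R$, $i\in\{1,2,\dots,d\}$, which are continuous and hence $\mc B(\R^d)$/$\mc B(\R)$-measurable. Composing with the $X_n$ yields $\mc F$/$\mc B(\R)$-measurable functions $\pi_i\circ X_n\colon\Omega\to\R$. Next I would invoke equivalence of norms on $\R^d$ to produce a constant $C\in(0,\infty)$ such that for every $v\in\R^d$ and every $i\in\{1,\dots,d\}$ one has $\abs{\pi_i(v)}\leq C\norm{v}$. Applied to $v=X_n(\omega)-Y(\omega)$ this gives, for every $\omega\in\Omega$ and every $i\in\{1,\dots,d\}$,
\begin{equation}
\limsup_{n\to\infty}\babs{(\pi_i\circ X_n)(\omega)-(\pi_i\circ Y)(\omega)}\leq C\limsup_{n\to\infty}\norm{X_n(\omega)-Y(\omega)}=0.
\end{equation}

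Then I would apply Lemma~\ref{lem:convergence_measurability} (with $X_n\is \pi_i\circ X_n$ and $Y\is \pi_i\circ Y$) separately for each $i\in\{1,\dots,d\}$ to conclude that $\pi_i\circ Y\colon\Omega\to\R$ is $\mc F$/$\mc B(\R)$-measurable. Finally I would use the standard fact that $\mc B(\R^d)$ coincides with the product $\sigma$-algebra $\mc B(\R)^{\otimes d}$, so a function into $\R^d$ is $\mc F$/$\mc B(\R^d)$-measurable if and only if all its coordinate functions are $\mc F$/$\mc B(\R)$-measurable; this gives the claim for $Y$.

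There is no real obstacle here: the whole argument is essentially a reduction via norm equivalence plus the product structure of the Borel $\sigma$-algebra on $\R^d$. The only mildly subtle point is the invocation of norm equivalence, and a short sentence citing this classical fact suffices. The proof of Lemma~\ref{lem:convergence_measurability_d} is thus completed.
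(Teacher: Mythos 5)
Your proposal is correct and follows essentially the same route as the paper: reduce to coordinates via equivalence of norms on $\R^d$ (the paper bounds the $\ell^1$-sum of coordinate differences by a constant times $\norm{X_n(\omega)-Y(\omega)}$, which is the same device as your projection bound), apply Lemma~\ref{lem:convergence_measurability} coordinatewise, and conclude using $\mc B(\R^d)=[\mc B(\R)]^{\otimes d}$. No gaps.
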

\begin{proof}[Proof of Lemma~\ref{lem:convergence_measurability_d}]
  Throughout this proof
  let $K\in[0,\infty]$ satisfy
    \beq
    \label{eq:normq}
    K
    =
    \sup_{v=(v_1,v_2,\dots,v_d)\in\R^d\setminus\{0\}}
    \bbbbp{\frac{\bp{\sum_{j=1}^d\,\abs{v_j}}}{\norm{v}}},
    \eeq
  let $X_{n,i}\colon\Omega\to\R$, $n\in\N$, $i\in\{1,2,\dots,d\}$, 
    \intrtype{be the functions which }%
    satisfy
    for all 
      $n\in\N$,
      $\omega\in\Omega$
    that
    \beq
      X_n(\omega)=(X_{n,1}(\omega),X_{n,2}(\omega),\dots,X_{n,d}(\omega)),
    \eeq
  and let $Y_i\colon \Omega\to\R$, $i\in\{1,2,\dots,d\}$, 
    \intrtype{be the functions which }%
    satisfy
    for all 
      $\omega\in\Omega$
    that
    \beq
    Y(\omega)=(Y_1(\omega),Y_2(\omega),\dots,Y_d(\omega))
    .
    \eeq
  Observe that
    the fact that all norms on $\R^d$ are equivalent
  ensures that
    $K<\infty$.
    This 
  implies that for all
    $n\in\N$,
    $i\in\{1,2,\dots,d\}$, 
    $\omega\in\Omega$ 
  it holds that
  \beq
  \label{eq:compbnd}
    \abs{X_{n,i}(\omega)-Y_i(\omega)}
    \leq 
    \sum_{j=1}^d\,\abs{X_{n,j}(\omega)-Y_j(\omega)}
    \ann{\leq}{\eqref{eq:normq}}
    K\,\norm{X_n(\omega)-Y(\omega)}.
  \eeq
    The assumption that 
      for all 
        $\omega\in\Omega$ 
      it holds that
        $\limsup_{ n \to \infty } \norm{X_n(\omega)-Y(\omega)}=0$
    and the fact that $K<\infty$
    hence
  show that for all 
    $i\in\{1,2,\dots,d\}$, 
    $\omega\in\Omega$
  it holds that
  \beq
  \label{eq:compconv}
    \limsup_{ n \to \infty }\,\abs{X_{n,i}(\omega)-Y_i(\omega)}=0.
  \eeq
  Furthermore, observe that 
    the assumption that
      for all
        $n\in\N$
      it holds that
        $X_n$ is an $\mc F$/$\mc B(\R^d)$-measurable function
  implies that for all 
    $n\in\N$, 
    $i\in\{1,2,\dots,d\}$ 
  it holds that
    $X_{n,i}
    $ is an $\mathcal F$/$\mathcal B(\R)$-measurable function.
  Combining 
    this 
    and \eqref{eq:compconv} 
  with
    Lemma~\ref{lem:convergence_measurability}
  establishes that for all 
    $i\in\{1,2,\dots,d\}$
  it holds that 
    $Y_i
    $ is an $\mc F$/$\mc B(\R)$-measurable function.
    The fact that $\mc B(\R^d)=[\mc B(\R)]^{\otimes d}$
    hence
  shows that 
    $Y
    $ is an $\mc F$/$\mc B(\R^d)$-measurable function.
  The proof of Lemma~\ref{lem:convergence_measurability_d}
  is thus completed.
\end{proof}

\begin{lemma}
\label{lem:diffmeas}
  Let $d\in\N$,
  $x,h\in\R^d$,
  let $(\Omega,\mc F)$ be a measurable space,
  and let $y^z\colon \Omega\to\R^d$, $z\in\R^d$,
    be $\mc F$/$\mc B(\R^d)$-measurable functions
    which satisfy for all 
      $\omega\in\Omega$ 
    that 
      $(\R^d\ni z\mapsto y^z(\omega)\in\R^d)\in C^1(\R^d,\R^d)$.
  Then it holds that
    $\Omega\ni \omega\mapsto \bp{\tfrac\partial{\partial x} y^x(\omega)}(h)\in\R^d$
    is an $\mc F$/$\mc B(\R^d)$-measurable function.
\end{lemma}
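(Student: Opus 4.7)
The plan is to express the directional derivative as a pointwise limit of measurable difference quotients and then invoke Lemma~\ref{lem:convergence_measurability_d}.

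\textbf{Step 1 (Approximating sequence).} For each $n\in\N$, define $Z_n\colon\Omega\to\R^d$ by
\[
  Z_n(\omega) \;=\; n\,\bbr{ y^{x+h/n}(\omega) - y^x(\omega) } .
\]
Since the maps $\omega\mapsto y^{x+h/n}(\omega)$ and $\omega\mapsto y^x(\omega)$ are, by hypothesis, $\mc F$/$\mc B(\R^d)$-measurable, and since finite linear combinations of $\R^d$-valued Borel-measurable functions are again Borel-measurable (the componentwise addition/scalar multiplication maps are continuous, hence Borel), each $Z_n$ is $\mc F$/$\mc B(\R^d)$-measurable.

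\textbf{Step 2 (Pointwise convergence).} Fix $\omega\in\Omega$. By assumption the map $F_\omega\colon\R^d\to\R^d$ given by $F_\omega(z) = y^z(\omega)$ lies in $C^1(\R^d,\R^d)$. By the definition of the (Fr\'echet) derivative,
\[
  \lim_{n\to\infty}\bbbnorm{ n\bbr{ F_\omega(x+h/n) - F_\omega(x) } - \bp{\tfrac{\partial}{\partial x}y^x(\omega)}(h) }_{\R^d} \;=\; 0
\]
(for any norm on $\R^d$, since all norms on $\R^d$ are equivalent). In other words, $\limsup_{n\to\infty}\norm{Z_n(\omega) - \bp{\tfrac\partial{\partial x}y^x(\omega)}(h)}_{\R^d}=0$ for every $\omega\in\Omega$, where $\norm{\cdot}_{\R^d}$ denotes, say, the Euclidean norm on $\R^d$.

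\textbf{Step 3 (Measurability of the limit).} Combining Steps 1 and 2 with Lemma~\ref{lem:convergence_measurability_d} (applied with $d\is d$, $\norm{\cdot}\is \norm{\cdot}_{\R^d}$, $X_n\is Z_n$, and $Y\is \bp{\omega\mapsto \bp{\tfrac\partial{\partial x}y^x(\omega)}(h)}$) yields that $\Omega\ni\omega\mapsto\bp{\tfrac\partial{\partial x}y^x(\omega)}(h)\in\R^d$ is $\mc F$/$\mc B(\R^d)$-measurable, completing the proof.

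There is no real obstacle here: the only subtlety is making sure the $C^1$ hypothesis on $z\mapsto y^z(\omega)$ is strong enough to guarantee the pointwise convergence of the difference quotients to the directional derivative, which it clearly is (in fact mere differentiability at $x$ suffices for this step).
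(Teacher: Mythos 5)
Your proof is correct and follows essentially the same route as the paper: both express $\bp{\tfrac\partial{\partial x}y^x(\omega)}(h)$ as the pointwise limit of the measurable difference quotients $n\br{y^{x+h/n}(\omega)-y^x(\omega)}$ and conclude via Lemma~\ref{lem:convergence_measurability_d}. No gaps.
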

\begin{proof}[Proof of Lemma~\ref{lem:diffmeas}]
  Throughout this proof let 
    $D_n\colon \Omega\to \R^d$, 
      $n\in\N$,
      \intrtype{be the sequence of functions which satisfies }%
      \intrtypen{satisfy }%
      for all
        $n\in\N$,
        $\omega\in\Omega$
      that
      \beq
        D_{n}(\omega)=\frac{y^{x+n^{-1}h}(\omega)-y^x(\omega)}{n^{-1}}
      \eeq
    and let $\norm{\cdot}\colon\R^d\to[0,\infty)$ be the $d$-dimensional Euclidean norm.
  Note that for all
    $\omega\in\Omega$
  it holds that
  \beq
  \label{eq:difflim}
    \limsup_{n\to\infty}\,\bnorm{D_{n}(\omega)-\bp{\tfrac\partial{\partial x} y^x(\omega)}(h)}=0
    .
  \eeq
  Furthermore, observe that
    the assumption that
      for all
        $z\in\R^d$
      it holds that
        $y^z$ is an $\mc F$/$\mc B(\R^d)$-measurable function
  ensures that for all
    $n\in\N$
  it holds that
    $D_n$ is an $\mc F$/$\mc B(\R^d)$-measurable function.
  Combining
    this
    and \eqref{eq:difflim}
  with
    Lemma~\ref{lem:convergence_measurability_d} 
      (with
        $(\Omega,\mc F)\is (\Omega,\mc F)$,
        $d\is d$,
        $\norm\cdot\is\norm\cdot$,
        $Y\is \bp{\Omega\ni \omega\mapsto\bp{\frac\partial{\partial x}y^x(\omega)}(h)\in\R^d}$,
        $(X_n)_{n\in\N}\is (D_n)_{n\in\N}$
      in the notation of Lemma~\ref{lem:convergence_measurability_d})
  implies that
    $\Omega\ni \omega\mapsto \bp{\tfrac\partial{\partial x} y^x(\omega)}(h)\in\R^d$ 
      is an $\mc F$/$\mc B(\R^d)$-mea\-sur\-able function.
  The proof of Lemma~\ref{lem:diffmeas} is thus completed.
\end{proof}

\subsection{Measurability properties for stochastic processes}
\label{subsec:measproc}
 
\begin{lemma}
\label{lem:supmeas1}
  Let $T\in[0,\infty)$,
  let $(\Omega,\mc F,\PP)$ be a probability space,
  and let $Y\colon [0,T]\times\Omega\to \R$ be a stochastic process
    with continuous sample paths.
  Then 
  \begin{enumerate}[label=(\roman{enumi})]
    \item 
      it holds for all 
        $\omega\in\Omega$ 
      that
      $
        \sup_{t\in[0,T]} Y(t,\omega)=\sup_{t\in[0,T]\cap\Q} Y(t,\omega)
      $
      and
    \item
      it holds that
        $\Omega\ni\omega\mapsto \sup_{t\in[0,T]} Y(t,\omega)\in\R$
        is an $\mc F$/$\mc B(\R)$-measurable function.
  \end{enumerate}
\end{lemma}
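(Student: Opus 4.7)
The plan is to deduce item~(ii) from item~(i) together with Lemma~\ref{lem:sup_measurability}, so I would first establish~(i) and then use it to conclude~(ii).

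For item~(i), I would fix an arbitrary $\omega\in\Omega$. The inequality $\sup_{t\in[0,T]\cap\Q}Y(t,\omega)\leq\sup_{t\in[0,T]}Y(t,\omega)$ is immediate from $[0,T]\cap\Q\subseteq[0,T]$. For the reverse inequality, I would exploit the density of $[0,T]\cap\Q$ in $[0,T]$ combined with the hypothesis that $Y$ has continuous sample paths: for every $t\in[0,T]$ there exists a sequence $(t_n)_{n\in\N}\subseteq[0,T]\cap\Q$ with $\lim_{n\to\infty}t_n=t$, and continuity of $s\mapsto Y(s,\omega)$ then yields $Y(t,\omega)=\lim_{n\to\infty}Y(t_n,\omega)\leq\sup_{s\in[0,T]\cap\Q}Y(s,\omega)$. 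Taking the supremum over $t\in[0,T]$ completes~(i).

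For item~(ii), I would note that $[0,T]\cap\Q$ is (at most) countable and that the hypothesis that $Y$ is a stochastic process ensures that for every $t\in[0,T]$ the function $Y(t,\cdot)\colon\Omega\to\R$ is $\mc F$/$\mc B(\R)$-measurable and hence also $\mc F$/$\mc B(\extR)$-measurable. Applying Lemma~\ref{lem:sup_measurability} with $I\is [0,T]\cap\Q$ and $(X_i)_{i\in I}\is (Y(i,\cdot))_{i\in I}$ then shows that $\Omega\ni\omega\mapsto\sup_{t\in[0,T]\cap\Q}Y(t,\omega)\in\extR$ is $\mc F$/$\mc B(\extR)$-measurable. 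Combining this with~(i) and the fact that a continuous function on the compact set $[0,T]$ is bounded, so that the supremum lies in $\R$ for every $\omega\in\Omega$, yields that $\Omega\ni\omega\mapsto\sup_{t\in[0,T]}Y(t,\omega)\in\R$ is $\mc F$/$\mc B(\R)$-measurable.

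There is no substantive obstacle; the only mild subtlety is the passage from $\mc B(\extR)$-measurability to $\mc B(\R)$-measurability, which is handled by the compactness of $[0,T]$ together with the continuity of sample paths, guaranteeing finiteness of the supremum for each $\omega\in\Omega$.
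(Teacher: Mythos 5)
Your proposal is correct and follows essentially the same route as the paper: item~(i) via density of $[0,T]\cap\Q$ in $[0,T]$ together with continuity of the sample paths, and item~(ii) by applying Lemma~\ref{lem:sup_measurability} to the countable index set $[0,T]\cap\Q$ and combining with~(i). Your extra remark on finiteness of the supremum (continuity on the compact interval $[0,T]$), justifying the passage from $\mc B(\extR)$- to $\mc B(\R)$-measurability, is a detail the paper leaves implicit but is handled correctly.
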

\begin{proof}[Proof of Lemma~\ref{lem:supmeas1}]
  Observe that 
    the hypothesis that
      for all
        $\omega\in\Omega$
      it holds that
        $[0,T]\ni t\mapsto Y(t,\omega)\in\R$ is a continuous function
    and the fact that $[0,T]\cap\Q$ is dense in $[0,T]$
  imply that for all 
    $\omega\in\Omega$
  it holds that
  \beq
    \sup_{t\in[0,T]} Y(t,\omega)
    =
    \sup_{t\in[0,T]\cap\Q} Y(t,\omega)
    .
  \eeq
  Combining this with Lemma~\ref{lem:sup_measurability}
    (with
      $(\Omega,\mc F)\is(\Omega,\mc F)$,
      $I\is[0,T]\cap\Q$,
      $Y\is\bp{\Omega\ni\omega\mapsto\sup_{t\in[0,T]} Y(t,\omega)\in\R}$,
      $(X_t)_{t\in[0,T]\cap\Q}\is (\Omega\ni\omega\mapsto Y(t,\omega)\in\R)_{t\in[0,T]\cap\Q}$
    in the notation of Lemma~\ref{lem:sup_measurability}) 
  shows that $\Omega\ni\omega\mapsto\sup_{t\in[0,T]} Y(t,\omega)\in\R$
    is an $\mc F$/$\mc B(\R)$-measurable function.
  The proof of Lemma~\ref{lem:supmeas1} is thus completed.
\end{proof}

\begin{lemma}
\label{lem:supmeas}
  Let $d\in\N$,
  $T,R\in[0,\infty)$,
  let $\norm\cdot\colon\R^d\to[0,\infty)$ be a norm,
  let $(\Omega,\mc F,\PP)$ be a probability space,
  and let $Y^x\colon[0,T]\times \Omega\to[0,\infty)$, $x\in\R^d$,
    be stochastic processes with continuous sample paths
    which satisfy 
      for all 
        $t\in[0,T]$,
        $\omega\in\Omega$
      that
        $(\R^d\ni x\mapsto Y^x(t,\omega)\in[0,\infty))\in C(\R^d,[0,\infty))$.
  Then
  \begin{enumerate}[label=(\roman{enumi}),ref=(\roman{enumi})]
    \item \label{item:lemsupmeas:1}
    it holds for all
      $\omega\in\Omega$
    that
    \beq
      \bbbbr{
      \sup_{x\in\{z\in\R^d\colon\norm{z}\leq R\}}\,
        \sup_{t\in[0,T]}
          Y^x(t,\omega)}
      =
      \bbbbr{
      \sup_{x\in\{z\in\Q^d\colon\norm{z}\leq R\}}\,
        \sup_{t\in[0,T]\cap\Q}
          Y^x(t,\omega)}
    \eeq
    and
    \item \label{item:lemsupmeas:2}
    it holds that
      \beq
      \Omega\ni\omega\mapsto 
        \bbbbr{\sup_{x\in\{z\in\R^d\colon\norm{z}\leq R\}}\,
          \sup_{t\in[0,T]}
            Y^x(t,\omega)}
      \in[0,\infty]
      \eeq
      is an $\mc F$/$\mc B([0,\infty])$-measurable function%
      .
    \em
  \end{enumerate}
\end{lemma}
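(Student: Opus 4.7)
The plan is to deduce (ii) from (i) by a direct application of Lemma~\ref{lem:sup_measurability}, so the substance of the proof lies in establishing (i). The strategy for (i) is to reduce the two suprema sequentially, first the inner supremum over $[0,T]$ and then the outer supremum over the closed ball, to suprema over countable dense subsets.

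For the inner reduction I would fix $\omega\in\Omega$ and $x\in\R^d$ arbitrary and apply Lemma~\ref{lem:supmeas1}(i) (or directly invoke its argument) to the continuous sample path $[0,T]\ni t\mapsto Y^x(t,\omega)\in[0,\infty)$ to obtain $\sup_{t\in[0,T]}Y^x(t,\omega)=\sup_{t\in[0,T]\cap\Q}Y^x(t,\omega)$. I would then introduce the auxiliary function $F(x,\omega):=\sup_{t\in[0,T]\cap\Q}Y^x(t,\omega)\in[0,\infty]$ and observe that $F(\cdot,\omega)$ is lower semicontinuous on $\R^d$: by the hypothesis on $Y^x$, for every $t\in[0,T]\cap\Q$ the map $\R^d\ni x\mapsto Y^x(t,\omega)\in[0,\infty)$ is continuous, and $F(\cdot,\omega)$ is a countable supremum of such continuous functions. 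For the outer reduction I would use that $\{z\in\Q^d\colon\norm{z}\le R\}$ is dense in $B:=\{z\in\R^d\colon\norm{z}\le R\}$ (the case $R=0$ is trivial since both sets equal $\{0\}$; for $R>0$ the open ball $\{\norm{z}<R\}$ has non-empty Euclidean-interior in which $\Q^d$ is dense, and every point of the closed ball is the limit of a sequence in the open ball by scaling toward the origin). Combining density with lower semicontinuity, for each $x_0\in B$ I can pick a sequence $(x_n)_{n\in\N}\subseteq\{z\in\Q^d\colon\norm{z}\le R\}$ with $x_n\to x_0$ and estimate
\[
  F(x_0,\omega)\le\liminf_{n\to\infty}F(x_n,\omega)\le\sup_{x\in\{z\in\Q^d\colon\norm{z}\le R\}}F(x,\omega),
\]
which together with the obvious reverse inequality yields the equality claimed in (i).

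For (ii), I would note that for each fixed $x\in\R^d$ and $t\in[0,T]$ the function $\Omega\ni\omega\mapsto Y^x(t,\omega)$ is $\mc F$/$\mc B([0,\infty))$-measurable by the definition of a stochastic process, hence also $\mc F$/$\mc B([0,\infty])$-measurable. Since the product index set $\{z\in\Q^d\colon\norm{z}\le R\}\times([0,T]\cap\Q)$ is countable, Lemma~\ref{lem:sup_measurability} applied with this index set delivers the $\mc F$/$\mc B([0,\infty])$-measurability of the double supremum, which by (i) coincides with the function in (ii). The main technical point to watch is that the hypothesis only supplies separate continuity of $(x,t)\mapsto Y^x(t,\omega)$ in each variable (with $\omega$ frozen), not joint continuity in $(x,t)$, so a naive one-step approximation by a single dense sequence in the product $B\times[0,T]$ is not immediately available; the lower semicontinuity of the inner supremum $F(\cdot,\omega)$, together with the sequential reduction, is precisely what compensates for the lack of joint continuity.
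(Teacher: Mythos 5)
Your proposal is correct and follows essentially the paper's own route: reduce the inner supremum to rational times via Lemma~\ref{lem:supmeas1}, reduce the outer supremum to the rational ball $\{z\in\Q^d\colon\norm{z}\leq R\}$ using continuity in $x$ together with density, and then obtain \ref{item:lemsupmeas:2} from Lemma~\ref{lem:sup_measurability} applied to the countable index set $\{z\in\Q^d\colon\norm{z}\leq R\}\times([0,T]\cap\Q)$. The only (harmless) difference is in the justification of the outer reduction: the paper interchanges the two suprema and applies continuity in $x$ at each fixed rational $t$, whereas you invoke lower semicontinuity of the countable envelope $F(\cdot,\omega)$ --- two equivalent ways of exploiting the same separate-continuity hypothesis.
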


\begin{proof}[Proof of Lemma~\ref{lem:supmeas}]
Throughout this proof 
  let $I\subseteq \Q^d\times([0,T]\cap\Q)$ 
    be the set which satisfies
    \beq
    I=\{(x,t)\in\Q^d\times([0,T]\cap\Q)\colon \norm x\leq R\}
    .
    \eeq
Observe that 
  Lemma~\ref{lem:supmeas1}
implies that for all 
  $x\in\R^d$,
  $\omega\in\Omega$
it holds that
\beq
  \sup_{t\in[0,T]}Y^x(t,\omega)
  =\sup_{t\in[0,T]\cap\Q}Y^x(t,\omega)
  .
\eeq
  \com{a} The assumption that 
    for all 
      $t\in[0,T]$,
      $\omega\in\Omega$
    it holds that
      $\R^d\ni x\mapsto Y^x(t,\omega)\in[0,\infty)$ is a continuous function
  and \com{b} the fact that
    $\{z\in\Q^d\colon \norm z\leq R\}$ is dense in $\{z\in\R^d\colon \norm z\leq R\}$ 
  \com{c} therefore 
show that for all 
  $\omega\in\Omega$
it holds that
\ba
\label{eq:supcomb}
  \bbbbr{\sup_{x\in\{z\in\R^d\colon\norm{z}\leq R\}}\,\sup_{t\in[0,T]}Y^x(t,\omega)}
  &\ann{=}{c}
  \bbbbr{\sup_{x\in\{z\in\R^d\colon\norm{z}\leq R\}}\,\sup_{t\in[0,T]\cap\Q}Y^x(t,\omega)}\\
  &=
  \bbbbr{\sup_{(x,t)\in\{(z,s)\in\R^d\times([0,T]\cap \Q)\colon\norm z\leq R\}}Y^x(t,\omega)}\\
  &=
  \bbbbr{\sup_{t\in[0,T]\cap\Q}\,\sup_{x\in\{z\in\R^d\colon\norm{z}\leq R\}}Y^x(t,\omega)}\\
  &\ann{=}{ab}
  \bbbbr{\sup_{t\in[0,T]\cap\Q}\,\sup_{x\in\{z\in\Q^d\colon\norm{z}\leq R\}}Y^x(t,\omega)}\\
  &=
  \bbbbr{\sup_{(x,t)\in\{(z,s)\in\Q^d\times([0,T]\cap \Q)\colon\norm z\leq R\}}Y^x(t,\omega)}
  .
\ea
Hence, we obtain that for all
  $\omega\in\Omega$ 
it holds that
\beq
  \bbbbr{
  \sup_{x\in\{z\in\R^d\colon\norm{z}\leq R\}}\,\sup_{t\in[0,T]}Y^x(t,\omega)
  }
  =
  \bbbbr{
  \sup_{x\in\{z\in\Q^d\colon\norm{z}\leq R\}}\,\sup_{t\in[0,T]\cap\Q}Y^x(t,\omega)
  }
  .
\eeq
This establishes \ref{item:lemsupmeas:1}.
In the next step we combine
  \eqref{eq:supcomb}
  and the fact that $I$ is an at most countable set
with Lemma~\ref{lem:sup_measurability}
  (with
    $(\Omega,\mc F)\is(\Omega,\mc F)$,
    $I\is I$,
    $Y\is \bp{\Omega\ni\omega\mapsto \sup_{x\in\{z\in\R^d\colon\norm{z}\leq R\}}\,\sup_{t\in[0,T]}Y^x(t,\omega)\in[0,\infty]}$,
    $(X_{(x,t)})_{(x,t)\in I}\is(\Omega\ni\omega\mapsto Y^x(t,\omega)\in[0,\infty))_{(x,t)\in I}$
  in the notation of Lemma~\ref{lem:sup_measurability})
to obtain~\ref{item:lemsupmeas:2}.
The proof of Lemma~\ref{lem:supmeas} is thus completed.
\end{proof}

\section{Measurability properties for solutions of SDEs}\label{section4}

In this section we establish in Lemma~\ref{lem:intmeas} in Subsection~\ref{subsec:meassdes}
below the well-known fact that pathwise 
solutions of certain additive noise driven SDEs are stochastic processes. 
Our proof of Lemma~\ref{lem:intmeas} exploits the fact that Euler approximations
converge pathwise to solutions of such SDEs (cf.~Lemmas~\ref{lem:approxa} and
\ref{lem:approxa2} in 
Subsection~\ref{subsec:euler} and Lemma~\ref{lem:interpol} in Subsection~\ref{subsec:conteuler} 
below) 
as well as the elementary fact that the Euler approximations, in turn, 
are indeed stochastic processes.
Our proof of the convergence statement for the Euler approximations in Lemma~\ref{lem:approxa} 
exploits the familiar time-discrete Gronwall inequality in Lemma~\ref{lem:discr_gronwall} in 
Subsection~\ref{subsec:euler} below. For completeness we also include here detailed proofs 
for Lemmas~\ref{lem:discr_gronwall}--%
\ref{lem:intmeas}.

\subsection{Time-discrete approximations for deterministic differential equations (DEs)}
\label{subsec:euler}

\begin{samepage}
\begin{lemma}
\label{lem:discr_gronwall}
Let $ N \in \N $,
$ \beta \in [0,\infty) $,
$ \alpha \in \R $, 
$ f_0, f_1, \dots, f_N \in \R \cup \{ \infty \} $ 
satisfy 
for all
$ n \in \left\{ 0, 1, \dots, N \right\} $
that
\begin{equation}
\label{gronwall2need}
  f_n
  \leq
  \alpha
  +
  \beta
  \left(
    \sum^{ n - 1 }_{ k = 0 }
    f_k
  \right)
  .
\end{equation}
Then it holds for all
$ n \in \left\{ 0, 1, \dots, N \right\} $
that
\begin{equation}
\label{eq:discr_gronwall_c}
  f_n
  \leq 
  \alpha 
  \left(
    1 + \beta 
  \right)^n
  \leq
  \left| \alpha \right|
  e^{
    \beta n
  }
  < \infty
  .
\end{equation}
\end{lemma}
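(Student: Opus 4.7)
\medskip

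The plan is to prove the inequality $f_n \leq \alpha(1+\beta)^n$ by strong induction on $n \in \{0,1,\dots,N\}$, and then deduce the remaining two inequalities by elementary estimates.

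For the base case $n=0$, I would observe that the empty sum convention makes \eqref{gronwall2need} reduce to $f_0 \leq \alpha$, which matches $\alpha(1+\beta)^0 = \alpha$ and in particular forces $f_0 \in \R$ (since $\alpha \in \R$). For the inductive step, assuming $f_k \leq \alpha(1+\beta)^k < \infty$ for all $k \in \{0,1,\dots,n-1\}$, I would apply \eqref{gronwall2need} together with the geometric sum identity, splitting into the cases $\beta=0$ (trivial) and $\beta>0$ (where $\sum_{k=0}^{n-1}(1+\beta)^k = \frac{(1+\beta)^n-1}{\beta}$). This yields
\begin{equation}
  f_n \leq \alpha + \beta\bbbbr{\sum_{k=0}^{n-1}\alpha(1+\beta)^k}
  = \alpha + \alpha\bp{(1+\beta)^n - 1}
  = \alpha(1+\beta)^n,
\end{equation}
where the inequality uses monotonicity of finite sums (valid regardless of the sign of $\alpha$, since the bound $f_k \leq \alpha(1+\beta)^k$ holds termwise). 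In particular $f_n \in \R$, so the induction continues without issues from the extended-real value $\infty$.

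For the remaining two inequalities in \eqref{eq:discr_gronwall_c}, I would note that $\alpha(1+\beta)^n \leq |\alpha|(1+\beta)^n$ is immediate, that $1+\beta \leq e^\beta$ for $\beta \in [0,\infty)$ gives $(1+\beta)^n \leq e^{\beta n}$, and that $|\alpha| e^{\beta n} < \infty$ since $\alpha \in \R$ and $\beta n \in [0,\infty)$.

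The argument is entirely routine — there is no real obstacle; the only mildly subtle point is making sure the induction is performed with the extended-real codomain in mind, using the hypothesis at each step to conclude that $f_n < \infty$ before applying arithmetic identities. The case distinction $\beta=0$ versus $\beta>0$ in the geometric sum step is a minor bookkeeping issue but causes no difficulty.
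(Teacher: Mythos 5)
Your proposal is correct and follows essentially the same route as the paper: an induction establishing the discrete Gronwall bound $f_n\leq\alpha(1+\beta)^n$, followed by the elementary estimates $1+\beta\leq e^{\beta}$ and $\alpha\leq\abs{\alpha}$. The only cosmetic difference is that the paper introduces an auxiliary majorant sequence $u_n=\alpha+\beta\sum_{k=0}^{n-1}u_k$ and derives its closed form from the recursion $u_{n+1}=(1+\beta)u_n$, whereas you substitute the closed-form bound directly and sum the geometric series; both amount to the same induction.
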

\end{samepage}
\begin{proof}[Proof of
Lemma \ref{lem:discr_gronwall}]
Throughout this proof
  let
$ 
  u_0, u_1, \dots, u_N
  \in \R 
$
be the real numbers 
which satisfy
for all 
$ n \in \{ 0, 1, 2, \dots, N \} $
that
\begin{equation}
\label{eq:def_un}
  u_n 
=
  \alpha
  +
  \beta
  \left(
    \sum^{ n - 1 }_{ k = 0 }
    u_k
  \right)
  .
\end{equation}
Hence, we obtain that
for all
$ 
  n \in \{ 0, 1, \dots, N - 1 \}
$
it holds that
\begin{equation}
  u_{ n + 1 }
  =
  \alpha
  +
  \beta
  \left(
    \sum^n_{ k = 0 }
    u_k
  \right)
  =
  \underbrace{
  \alpha
  +
  \beta
  \left(
  \sum^{ n - 1 }_{ k = 0 }
  u_k
  \right)
  }_{
    = u_n
  }
  +
  \beta 
  u_n
  =
  \left( 1 + \beta \right) u_n
  .
\end{equation}
This implies that
for all $ n \in \{ 0, 1, \dots, N \} $
it holds that
\begin{equation}
\label{eq:un_satisfies}
  u_n
  =
  \alpha 
  \left( 1 + \beta \right)^{
    n
  }
  .
\end{equation}
Moreover, observe that induction shows that
for all
$ n \in \{ 0, 1, \dots, N \} $
it holds that
\begin{equation}
  f_n \leq u_n  
  .
\end{equation}
Combining this with \eqref{eq:un_satisfies}
establishes \eqref{eq:discr_gronwall_c}.
This
completes the proof
of Lemma~\ref{lem:discr_gronwall}.
\end{proof}

\begin{lemma}
  \label{lem:approxa}
  Let $d\in\N$, 
  $T\in[0,\infty)$,
  $f\in C([0,T]\times\R^d,\R^d)$, 
  let $\norm{\cdot}\colon\R^d\to[0,\infty)$ be a norm,
  assume for all $r\in(0,\infty)$ that
  \beq
    \sup_{t\in[0,T]}\,\sup_{\substack{x,y\in\R^d,\,x\neq y,\\ \norm x+\norm y\leq r}} \frac{\norm{f(t,x)-f(t,y)}}{\norm{x-y}}<\infty,
  \eeq
  let $Y\in C([0,T],\R^d)$ satisfy
  for all 
    $t\in[0,T]$ 
  that
  $
    Y(t)=Y(0)+\int_0^tf(s,Y(s))\,\diff s
  $,
  and let $\mc Y^N\colon\{0,1,\dots,N\}\to\R^d$, $N\in\N$, satisfy for all
    $N\in\N$, 
    $n\in\{0,1,\dots,N-1\}$ 
  that
  \beq
    \mc Y^N(0)=Y(0)
    \qqandqq
    \mc Y^N(n+1)=\mc Y^N(n)+ \tfrac TN f\bp{\tfrac{nT}{N},\mc Y^N(n)}
    .
  \eeq
  Then it holds that
  \beq
    \label{eq:approxaconc}
    \limsup_{N\to\infty}\bbbbr{\sup_{n\in\{0,1,\dots,N\}}\bnorm{\mc Y^N(n)-Y\bp{\tfrac{nT}N}}}
    =0
    .
  \eeq
\end{lemma}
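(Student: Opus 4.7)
The plan is to bound the discrete error $e_n := \bnorm{\mc Y^N(n) - Y(nT/N)}$ via a discrete Gronwall recursion, combined with a bootstrapping induction that keeps the Euler iterates inside a bounded region on which the local Lipschitz hypothesis on $f$ yields a uniform Lipschitz constant. First fix $R \in (0,\infty)$ with $R > \sup_{t\in[0,T]}\norm{Y(t)}$, which is finite since $Y$ is continuous on $[0,T]$, and let $L := \sup_{t\in[0,T]} \sup_{x\neq y,\,\norm x + \norm y \leq 4R} \norm{f(t,x)-f(t,y)}/\norm{x-y} \in [0,\infty)$, which is finite by hypothesis. Also introduce the local consistency quantity $\delta_N := \max_{n\in\{0,1,\dots,N-1\}} \sup_{s\in[nT/N,(n+1)T/N]} \norm{f(s,Y(s)) - f(nT/N,Y(nT/N))}$; since the map $[0,T] \ni s \mapsto f(s,Y(s)) \in \R^d$ is a continuous function on a compact interval, uniform continuity delivers $\lim_{N\to\infty}\delta_N=0$.

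Next comes the standard one-step error recursion. As long as $\norm{\mc Y^N(n)} \leq 3R$ (so that $\norm{\mc Y^N(n)} + \norm{Y(nT/N)} \leq 4R$ and the Lipschitz bound on $f$ in the spatial variable applies), writing $Y((n+1)T/N) - Y(nT/N) = \int_{nT/N}^{(n+1)T/N} f(s,Y(s))\,\diff s$ and splitting $f(nT/N,\mc Y^N(n)) - f(s,Y(s))$ into a spatial piece (controlled by $L e_n$) and a temporal piece (controlled by $\delta_N$) yields the inequality $e_{n+1} \leq (1 + LT/N)\,e_n + (T/N)\delta_N$. Telescoping this bound (with $e_0 = 0$) puts it in the form $e_n \leq T\delta_N + (LT/N)\sum_{k=0}^{n-1} e_k$, to which Lemma~\ref{lem:discr_gronwall} applies with $\alpha = T\delta_N$ and $\beta = LT/N$, giving $e_n \leq T \delta_N e^{LT}$ for every $n$ for which the Lipschitz region has been preserved.

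To close the argument, pick $N_0 \in \N$ large enough that $T \delta_N e^{LT} < R$ for all $N \geq N_0$, and for such $N$ carry out an induction on $n \in \{0,1,\dots,N\}$: the base case is immediate from $\mc Y^N(0) = Y(0)$, and the inductive hypothesis $e_n < R$ forces $\norm{\mc Y^N(n)} \leq \norm{Y(nT/N)} + e_n < 2R \leq 3R$, so the recursion above legitimately applies at the next step and produces $e_{n+1} \leq T\delta_N e^{LT} < R$. Since $T\delta_N e^{LT} \to 0$ as $N\to\infty$, passing to the limit delivers~\eqref{eq:approxaconc}. The main subtlety, as is typical for convergence of Euler schemes under merely local Lipschitz assumptions, is exactly this bootstrapping: one must rule out the possibility that the iterates temporarily leave the Lipschitz region before the error estimate has had time to take effect, which here is resolved cleanly by choosing $N_0$ so large that a single application of discrete Gronwall cannot push $e_n$ past the escape threshold $R$.
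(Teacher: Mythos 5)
Your proposal is correct and takes essentially the same route as the paper's proof: a one-step error recursion, the discrete Gronwall inequality of Lemma~\ref{lem:discr_gronwall}, a consistency term that vanishes by uniform continuity, and a localization device to handle the merely local Lipschitz hypothesis. The only differences are cosmetic: the paper localizes via the explicit exit index $\tau_N$ (first $n$ with error exceeding $1$) instead of your bootstrap induction with threshold $R$, and it splits the consistency error into a time increment of $f$ along $Y$ (its $\alpha_N$) plus a spatial increment bounded through $\norm{Y(s)-Y(kT/N)}\leq LT/N$, which produces the extra $L^2T^2/N$ term, whereas you compare directly with $f\bp{\tfrac{nT}{N},Y\bp{\tfrac{nT}{N}}}$ and absorb both effects into $\delta_N$ — equivalent in substance.
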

\begin{proof}[Proof of Lemma~\ref{lem:approxa}]
  Throughout this proof 
    let $R\in[0,\infty)$ be the real number which satisfies
    \beq
      R=2\bbbbr{\sup_{t\in[0,T]}\norm{Y(t)}}+1,
    \eeq
    let $L\in[0,\infty)$ be the real number which satisfies
    \beq
      L
      =
      \bbbbr{\sup_{t\in[0,T]}\,\sup_{\substack{x,y\in\R^d,\,x\neq y,\\ \norm x+\norm y\leq R}} \frac{\norm{f(t,x)-f(t,y)}}{\norm{x-y}}}
      +
      \bbbbr{\sup_{s,t\in[0,T]} f(s,Y(t))}
      ,
    \eeq
    let $\tau_N\in\{0,1,\dots,N\}$, $N\in\N$, be the numbers which satisfy
      for all
        $N\in\N$
      that
      \beq
      \label{eq:approxatauN}
        \tau_N
        =
        \min\bp{
          \{N\}
          \cup
          \bigl\{n\in\{0,1,\dots,N\}\colon \bnorm{\mc Y^N(n)-Y\bp{\tfrac{nT}N}}>1\bigr\}
        }
        ,
      \eeq
    and let $\alpha_{N}\in[0,\infty)$, $N\in\N$,
      be the real numbers which satisfy for all
        $N\in\N$
      that
      \beq
        \alpha_{N}
        =
        \sum_{k=0}^{N-1}\int_{\tfrac{kT}N}^{\tfrac{(k+1)T}N} \bnorm{f(s,Y(s))-f\bp{\tfrac{kT}N,Y(s)}}\,\diff s
        .
      \eeq
    Note that for all
      $N\in\N$,
      $n\in\{0,1,\dots,N\}$
    it holds that
    \ba
      &\bnorm{Y\bp{\tfrac{nT}N}-\mc Y^N(n)}
      \\&=
      \bbbnorm{
        \bbbbr{Y(0)+\int_0^{\tfrac{nT}N} f(s,Y(s))\,\diff s}
        -
        \bbbbr{Y(0)+\frac TN\sum_{k=0}^{n-1}f\bp{\tfrac{kT}N,\mc Y^N(k)}}
      }
      \\&=
      \bbbnorm{
        \int_0^{\tfrac{nT}N} f(s,Y(s))\,\diff s
        -
        \frac TN\sum_{k=0}^{n-1}f\bp{\tfrac{kT}N,\mc Y^N(k)}
      }
      .
    \ea
    Hence, we obtain that for all
      $N\in\N$,
      $n\in\{0,1,\dots,N\}$
    it holds that
    \ba
    \label{eq:approxa1}
      &\norm{Y\bp{\tfrac{nT}N}-\mc Y^N(n)}
      \\&=
      \bbbnorm{
        \sum_{k=0}^{n-1}\int_{\tfrac{kT}N}^{\tfrac{(k+1)T}N} f(s,Y(s))\,\diff s
        -
        \sum_{k=0}^{n-1}\int_{\tfrac{kT}N}^{\tfrac{(k+1)T}N}f\bp{\tfrac{kT}N,\mc Y^N(k)}\,\diff s
      }
      \\&=
      \bbbnorm{
        \sum_{k=0}^{n-1}\int_{\tfrac{kT}N}^{\tfrac{(k+1)T}N} f(s,Y(s))-f\bp{\tfrac{kT}N,\mc Y^N(k)}\,\diff s
      }
      \\&\leq
      \sum_{k=0}^{n-1}\int_{\tfrac{kT}N}^{\tfrac{(k+1)T}N} \bnorm{f(s,Y(s))-f\bp{\tfrac{kT}N,\mc Y^N(k)}}\,\diff s
      \\&\leq
      \sum_{k=0}^{n-1}\int_{\tfrac{kT}N}^{\tfrac{(k+1)T}N} \bnorm{f(s,Y(s))-f\bp{\tfrac{kT}N,Y(s)}}
        +
        \bnorm{f\bp{\tfrac{kT}N,Y(s)}-f\bp{\tfrac{kT}N,\mc Y^N(k)}}\,\diff s
      \\&\leq
      \alpha_N
      +
      \sum_{k=0}^{n-1}\int_{\tfrac{kT}N}^{\tfrac{(k+1)T}N} 
        \bnorm{f\bp{\tfrac{kT}N,Y(s)}-f\bp{\tfrac{kT}N,\mc Y^N(k)}}\,\diff s
      .
    \ea
    Moreover, note that for all
      $N\in\N$,
      $k\in\{0,1,\dots,N-1\}\cap[0,\tau_N)$,
      $s\in\bbr{\frac{kT}N,\frac{(k+1)T}N}$
    it holds that
    \ba
    \label{eq:approxa2}
      \norm{Y(s)-\mc Y^N(k)}
      &\leq
      \bnorm{Y(s)-Y\bp{\tfrac{kT}N}}+\bnorm{Y\bp{\tfrac{kT}N}-\mc Y^N(k)}
      \\&\leq
      \norm{Y(s)}+\bnorm{Y\bp{\tfrac{kT}N}}+\bnorm{Y\bp{\tfrac{kT}N}-\mc Y^N(k)}
      \\&\leq R
      .
    \ea
    Furthermore, note that for all
      $N\in\N$,
      $k\in\{0,1,\dots,N-1\}$,
      $s\in\bbr{\frac{kT}N,\frac{(k+1)T}N}$
    it holds that
    \ba
      \bnorm{Y(s)-Y\bp{\tfrac{kT}N}}
      &=
      \bbbnorm{\bbbbr{Y(0)+\int_0^sf(u,Y(u))\,\diff u}-\bbbbr{Y(0)+\int_0^{\frac{kT}N}f(u,Y(u))\,\diff u}}
      \\&=
      \bbbnorm{\int_{\frac{kT}N}^sf(u,Y(u))\,\diff u}
      \\&\leq
      \int_{\frac{kT}N}^s\norm{f(u,Y(u))}\,\diff u
      \\&\leq
      L\bp{s-\tfrac{kT}N}
      \\&\leq
      \frac{LT}N
      .
    \ea
      This 
      and \eqref{eq:approxa2}
    imply that for all
      $N\in\N$,
      $k\in\{0,1,\dots,N-1\}\cap[0,\tau_N)$,
      $s\in\bbr{\frac{kT}N,\frac{(k+1)T}N}$
    it holds that
    \ba
      \bnorm{f\bp{\tfrac{kT}N,Y(s)}-f\bp{\tfrac{kT}N,\mc Y^N(k)}}
      &\leq
      L\norm{Y(s)-\mc Y^N(k)}
      \\&\leq
      L\bnorm{Y(s)-Y\bp{\tfrac{kT}N}}+L\bnorm{Y\bp{\tfrac{kT}N}-\mc Y^N(k)}
      \\&\leq
      \frac{L^2T}N+L\bnorm{Y\bp{\tfrac{kT}N}-\mc Y^N(k)}
      .
    \ea
    Combining
      this
    with
      \eqref{eq:approxa1}
    shows that
      $N\in\N$,
      $n\in\{0,1,\dots,\tau_N\}$
    it holds that
    \ba
      \bnorm{Y\bp{\tfrac{nT}N}-\mc Y^N(n)}
      &\leq
      \alpha_{N}
      +
      \sum_{k=0}^{n-1}\bbbbr{\int_{\tfrac{kT}N}^{\tfrac{(k+1)T}N} 
        \frac{L^2T}N+L\bnorm{Y\bp{\tfrac{kT}N}-\mc Y^N(k)}\,\diff s}
      \\&=
      \alpha_{N}
      +
      \sum_{k=0}^{n-1}
        \bbbbr{\frac{L^2T^2}{N^2}+\frac{LT}N\bnorm{Y\bp{\tfrac{kT}N}-\mc Y^N(k)}}
      \\&=
      \alpha_{N}
      +
      \frac{L^2T^2n}{N^2}+\frac{LT}N\bbbp{\sum_{k=0}^{n-1}\bnorm{Y\bp{\tfrac{kT}N}-\mc Y^N(k)}}
      \\&\leq
      \alpha_{N}
      +
      \frac{L^2T^2}{N}+\frac{LT}N\bbbp{\sum_{k=0}^{n-1}\bnorm{Y\bp{\tfrac{kT}N}-\mc Y^N(k)}}
      .
    \ea
      Lemma~\ref{lem:discr_gronwall}
      (with
        $N\is \tau_N$,
        $\beta\is \frac{LT}N$,
        $\alpha\is \alpha_{N}+\frac{L^2T^2}{N}$,
        $(f_n)_{n\in\{0,1,\dots,N\}}\is \bp{\bnorm{Y\bp{\tfrac{nT}N}-\mc Y^N(n)}}_{n\in\{0,1,\dots,\tau_N\}}$
        for
        $N\in\N$
      in the notation of Lemma~\ref{lem:discr_gronwall})
      hence 
    establishes that for all
      $N\in\N$,
      $n\in\{0,1,\dots,\tau_N\}$
    it holds that
    \beq
    \label{eq:discgronc}
      \bnorm{Y\bp{\tfrac{nT}N}-\mc Y^N(n)}
      \leq
      \bbbabs{\alpha_{N}+\frac{L^2T^2}{N}}\exp\bbbp{\frac{LTn}N}
      \leq
      \bbbabs{\alpha_{N}+\frac{L^2T^2}{N}}\exp(LT)
      .
    \eeq
    In the next step we observe that
      the fact that $f$ is a continuous function
    ensures that there exist
      $\delta_\eps\in(0,\infty)$, $\eps\in(0,\infty)$,
    such that for all
      $\eps\in(0,\infty)$,
      $s,t\in[0,T]$,
      $x\in\{z\in\R^d\colon\norm{z}\leq R\}$
      with $\abs{s-t}\leq \delta_{\eps}$
    it holds that
    \beq
      \norm{f(s,x)-f(t,x)}<\eps
      .
    \eeq
    Hence, it holds for all
      $\eps\in(0,\infty)$,
      $N\in\N\cap[T/\delta_\eps,\infty)$
    that
    \beq
      \alpha_N
      \leq
      \sum_{k=0}^{N-1}\int_{\tfrac{kT}N}^{\tfrac{(k+1)T}N} \eps\,\diff s
      =
      T\eps
      .
    \eeq
    Therefore, we obtain that
    \beq
    \label{eq:limalphaN0}
      \limsup_{N\to\infty} \alpha_N=0.
    \eeq
      This 
      and \eqref{eq:discgronc}
    prove that there exists 
      $M\in\N$ 
    such that for all
      $N\in\N\cap[M,\infty)$
    it holds that
    \beq
      \bnorm{Y\bp{\tfrac{\tau_NT}N}-\mc Y^N(\tau_N)}
      <
      1
      .
    \eeq
    Combining
      this 
    with
      \eqref{eq:approxatauN}
    shows that for all
      $N\in\{M,M+1,\dots\}$
    it holds that
    \beq
      \tau_N=N
      .
    \eeq
      This
      and \eqref{eq:discgronc}
    show that for all
      $N\in\{M,M+1,\dots\}$
    it holds that
    \beq
      \sup_{n\in\{0,1,\dots,N\}}\bnorm{Y\bp{\tfrac{nT}N}-\mc Y^N(n)}
      \leq
      \bbbabs{\alpha_{N}+\frac{L^2T^2}{N}}\exp(LT)
      .
    \eeq
    Combining
      this
    with
      \eqref{eq:limalphaN0}
    proves
      \eqref{eq:approxaconc}.
    The proof of Lemma~\ref{lem:approxa} is thus completed.
\end{proof}

\begin{lemma}
  \label{lem:approxa2}
  Let $d\in\N$, 
  $T\in[0,\infty)$,
  $f\in C([0,T]\times\R^d,\R^d)$,
  $w\in C([0,T],\R^d)$,
  $\xi\in\R^d$,
  let $\norm{\cdot}\colon\R^d\to[0,\infty)$ be a norm,
  assume for all $r\in(0,\infty)$ that
  \beq
    \label{eq:flip3}
    \sup_{t\in[0,T]}\,\sup_{\substack{x,y\in\R^d,\,x\neq y,\\ \norm x+\norm y\leq r}} \frac{\norm{f(t,x)-f(t,y)}}{\norm{x-y}}<\infty,
  \eeq
  let $Y\in C([0,T],\R^d)$ satisfy
  for all 
    $t\in[0,T]$ 
  that
  $
    Y(t)=\xi+\int_0^tf(s,Y(s))\,\diff s+w(t)
  $,
  and let $\mc Y^N\colon\{0,1,\dots,N\}\to\R^d$, $N\in\N$, satisfy for all
    $N\in\N$, 
    $n\in\{0,1,\dots,N-1\}$ 
  that $\mc Y^N(0)=Y(0)$ and
  \beq
  \label{eq:recYN3}
    \mc Y^N(n+1)
    =
    \mc Y^N(n)+ \tfrac TN f\bp{\tfrac{nT}{N},\mc Y^N(n)}
    -w\bp{\tfrac{nT}N}+w\bp{\tfrac{(n+1)T}N}
    .
  \eeq
  Then it holds that
  \beq
    \limsup_{N\to\infty}\bbbbr{\sup_{n\in\{0,1,\dots,N\}}\bnorm{\mc Y^N(n)-Y\bp{\tfrac{nT}N}}}=0
    .
  \eeq
\end{lemma}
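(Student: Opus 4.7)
The plan is to remove the driving noise $w$ by a deterministic shift, thereby reducing the assertion to the unperturbed case already handled in Lemma~\ref{lem:approxa}. Concretely, I would introduce the function $g\in C([0,T]\times\R^d,\R^d)$ given by $g(s,z)=f(s,z+w(s))$, together with $Z\in C([0,T],\R^d)$ given by $Z(t)=Y(t)-w(t)$, and the grid functions $\mc Z^N\colon\{0,1,\dots,N\}\to\R^d$ given by $\mc Z^N(n)=\mc Y^N(n)-w(\tfrac{nT}{N})$. Continuity of $g$ follows from continuity of $f$ and $w$.

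The first step is to check that $Z$ satisfies the unperturbed integral equation driven by $g$, i.e.\ $Z(t)=\xi+\int_0^t g(s,Z(s))\,\diff s$ for all $t\in[0,T]$; this is immediate from the defining equation for $Y$. The second step is to check that $\mc Z^N$ are exactly the Euler approximations for $g$ with initial value $\xi$. Using the recursion~\eqref{eq:recYN3} one computes
\ba
\mc Z^N(n+1)
&=\mc Y^N(n+1)-w\bp{\tfrac{(n+1)T}{N}}\\
&=\mc Y^N(n)+\tfrac TN f\bp{\tfrac{nT}N,\mc Y^N(n)}-w\bp{\tfrac{nT}N}\\
&=\mc Z^N(n)+\tfrac TN g\bp{\tfrac{nT}N,\mc Z^N(n)},
\ea
together with $\mc Z^N(0)=\xi=Z(0)$.

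The third step is to verify the local Lipschitz hypothesis of Lemma~\ref{lem:approxa} for $g$. Let $M=\sup_{s\in[0,T]}\norm{w(s)}\in[0,\infty)$, fix $r\in(0,\infty)$, and observe that for all $t\in[0,T]$ and $x,y\in\R^d$ with $\norm{x}+\norm{y}\leq r$ one has $\norm{x+w(t)}+\norm{y+w(t)}\leq r+2M$, so that
\beq
\frac{\norm{g(t,x)-g(t,y)}}{\norm{x-y}}
=\frac{\norm{f(t,x+w(t))-f(t,y+w(t))}}{\norm{(x+w(t))-(y+w(t))}}
\leq L_{r+2M},
\eeq
where $L_{r+2M}\in[0,\infty)$ denotes the corresponding local Lipschitz bound for $f$ from~\eqref{eq:flip3}.

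Having verified all hypotheses, I would apply Lemma~\ref{lem:approxa} to $g,Z,(\mc Z^N)_{N\in\N}$ to obtain $\limsup_{N\to\infty}\sup_{n\in\{0,1,\dots,N\}}\bnorm{\mc Z^N(n)-Z(\tfrac{nT}{N})}=0$. The conclusion then follows at once because by construction $\mc Y^N(n)-Y(\tfrac{nT}{N})=\mc Z^N(n)-Z(\tfrac{nT}{N})$ for every $N\in\N$ and $n\in\{0,1,\dots,N\}$. There is no real obstacle here; the only point requiring care is to propagate the local Lipschitz property of $f$ through the (bounded) shift by $w$, which is handled by the uniform bound $M$ above.
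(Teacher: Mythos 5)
Your proposal is correct and follows essentially the same route as the paper's proof: shifting by $w$ to define $Z(t)=Y(t)-w(t)$, $\mc Z^N(n)=\mc Y^N(n)-w(\tfrac{nT}N)$, and $g(t,x)=f(t,x+w(t))$, verifying the local Lipschitz hypothesis for $g$ via the uniform bound on $w$, and then invoking Lemma~\ref{lem:approxa}. No gaps to report.
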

\begin{proof}[Proof of Lemma~\ref{lem:approxa2}]
  Throughout this proof
    let $Z\colon [0,T]\to\R^d$
      \intrtype{be the function which satisfies }%
      \intrtypen{satisfy }%
      for all
        $t\in[0,T]$
      that
      \beq
      \label{eq:defZ3}
        Z(t)=Y(t)-w(t),
      \eeq
    let $g\colon [0,T]\times\R^d\to\R^d$
      \intrtype{be the function which satisfies }%
      \intrtypen{satisfy }%
      for all
        $t\in[0,T]$,
        $x\in\R^d$
      that
      \beq
      \label{eq:defg}
        g(t,x)=f(t,x+w(t)),
      \eeq
  let $\mc Z^N\colon\{0,1,\dots,N\}\to\R^d$, $N\in\N$,
    \intrtype{be the functions which satisfy }%
    \intrtypen{satisfy }%
    for all
      $N\in\N$,
      $n\in\{0,1,\dots,N\}$
    that
    \beq
    \label{eq:defmcZN}
      \mc Z^N(n)=\mc Y^N(n)-w\bp{\tfrac{nT}N}
      ,
    \eeq
  let $C_{t,r}\in[0,\infty]$, $t\in[0,T]$, $r\in(0,\infty)$,
    satisfy 
    for all 
      $t\in[0,T]$, 
      $r\in(0,\infty)$ 
    that
    \beq
      C_{t,r}
      =
      \sup_{\substack{x,y\in\R^d,\,x\neq y,\\\norm x+\norm y\leq r}}\frac{\norm{f(t,x)-f(t,y)}}{\norm {x- y}},
    \eeq
  and let $K\in[0,\infty]$ satisfy
    \beq
      K=\sup_{t\in[0,T]}\norm{w(t)}
      .
    \eeq
  Note that 
    \eqref{eq:flip3} 
    and the hypothesis that $w
      $ is a continuous function
  imply that for all 
    $r\in(0,\infty)$ 
  it holds that
  \beq
  \label{eq:bndCK2}
    \sup_{t\in[0,T]} C_{t,r}<\infty
    \qqandqq 
    K<\infty
    .
  \eeq
  In addition, observe that 
    the hypothesis that $Y$, $w$, and $f$ are continuous functions
  shows that
  \beq
  \label{eq:Zgcont}
    Z\in C([0,T],\R^d)
    \qqandqq
    g\in C([0,T]\times\R^d,\R^d)
    .
  \eeq
  Moreover, note that 
    the triangle inequality 
  ensures that for all 
    $t\in[0,T]$,
    $r\in(0,\infty)$, 
    $x,y\in\R^d$
      with $\norm x+\norm y\leq r$
  it holds that 
  \beq
    \norm{x+w(t)}+\norm{y+w(t)}
    \leq \norm x+\norm{w(t)}+\norm y+\norm{ w(t)}
    \leq r+2K
    .
  \eeq
    This
  demonstrates that for all
    $t\in[0,T]$,
    $r\in(0,\infty)$,
    $x,y\in\R^d$ with $x\neq y$ and $\norm x+\norm y\leq r$
  it holds that
  \ba
    \frac{\norm{g(t,x)-g(t,y)}}{\norm{x-y}}
    &=
    \frac{\norm{f(t,x+w(t))-f(t,y+w(t))}}{\norm{(x+w(t))-(y+w(t))}}
    \\&\leq
    \sup_{\substack{u,v\in\R^d,\,u\neq v,\\\norm{u}+\norm{v}\leq r+2K}}
      \frac{\norm{f(t,u)-f(t,v)}}{\norm{u-v}}
    \\&=
    C_{t,r+2K}.
  \ea
  Combining
    this
    with \eqref{eq:bndCK2} 
  ensures that for all
    $r\in(0,\infty)$
  it holds that
  \beq
  \label{eq:glip}
    \sup_{t\in[0,T]}\,\sup_{\substack{x,y\in\R^d,\,x\neq y,\\\norm x+\norm y\leq r}}
      \frac{\norm{g(t,x)-g(t,y)}}{\norm{x-y}}
    \leq
    \sup_{t\in[0,T]}
      C_{t,r+2K}
    <\infty
    .
  \eeq
  Moreover, observe that
    the hypothesis that
      for all
        $t\in[0,T]$
      it holds that
        $Y(t)=\xi+\int_0^tf(s,Y(s))\,\diff s+w(t)$,
    \eqref{eq:defZ3},
    \eqref{eq:defg},
    and \eqref{eq:Zgcont}
  show that for all
    $t\in[0,T]$
  it holds that
  \beq
  \label{eq:Zint4}
    Z(t)
    =
    \xi+\int_0^t f(s,Y(s))\,\diff s
    =
    \xi+\int_0^t f(s,Z(s)+w(s))\,\diff s
    =
    Z(0)+\int_0^t g(s,Z(s))\,\diff s
    .
  \eeq
  Next we combine
    \eqref{eq:recYN3},
    \eqref{eq:defg},
    and \eqref{eq:defmcZN}
  to obtain that for all
    $N\in\N$,
    $n\in\{0,1,\dots,N-1\}$
  it holds that
  \ba
  \label{eq:recZN3}
    \mc Z^N(n+1)
    &=
    \mc Y^N(n+1)-w\bp{\tfrac{(n+1)T}N}
    \\&=
    \bbbr{\mc Y^N(n)+ \tfrac TN f\bp{\tfrac{nT}{N},\mc Y^N(n)}
      -w\bp{\tfrac{nT}N}+w\bp{\tfrac{(n+1)T}N}}
      -w\bp{\tfrac{(n+1)T}N}
    \\&=
    \mc Y^N(n)-w\bp{\tfrac{nT}N} + \tfrac TN f\bp{\tfrac{nT}{N},\mc Y^N(n)}
    \\&=
    \mc Z^N(n) + \tfrac TN f\bp{\tfrac{nT}{N},\mc Z^N(n)+w\bp{\tfrac{nT}N}}
    \\&=
    \mc Z^N(n) + \tfrac TN g\bp{\tfrac{nT}{N},\mc Z^N(n)}
    .
  \ea
  Furthermore, observe that 
    the assumption that for all
      $N\in\N$
    it holds that
      $\mc Y^N(0)=Y(0)$
  ensures that for all
    $N\in\N$
  it holds that
  \beq
    \mc Z^N(0)
    =
    \mc Y^N(0)-w(0)
    =
    Y(0)-w(0)
    =
    Z(0)
    .
  \eeq
  Combining
    this,
    \eqref{eq:Zgcont},
    \eqref{eq:glip},
    \eqref{eq:Zint4},
    and \eqref{eq:recZN3}
  with
    Lemma~\ref{lem:approxa}
    (with
      $d\is d$,
      $T\is T$,
      $f\is g$,
      $\norm\cdot\is\norm\cdot$,
      $Y\is Z$,
      $(\mc Y^N)_{N\in\N}\is\mc (\mc Z^N)_{N\in\N}$
    in the notation of Lemma~\ref{lem:approxa})
  shows that
  \beq
  \label{eq:limsupZN}
    \limsup_{N\to\infty}\bbbbr{\sup_{n\in\{0,1,\dots,N\}}\bnorm{\mc Z^N(n)-Z\bp{\tfrac{nT}N}}}
    =
    0
    .
  \eeq
  Moreover, note that for all
    $N\in\N$,
    $n\in\{0,1,\dots,N\}$
  it holds that
  \beq
    \bnorm{\mc Y^N(n)-Y\bp{\tfrac{nT}N}}
    =
    \bnorm{\bbr{\mc Z^N(n)+w\bp{\tfrac{nT}N}}-\bbr{Z\bp{\tfrac{nT}N}+w\bp{\tfrac{nT}N}}}
    =
    \bnorm{\mc Z^N(n)-Z\bp{\tfrac{nT}N}}
    .
  \eeq
  Combining 
    this 
  with
    \eqref{eq:limsupZN}
  establishes that
  \beq
    \limsup_{N\to\infty}\bbbbr{\sup_{n\in\{0,1,\dots,N\}}\bnorm{\mc Y^N(n)-Y\bp{\tfrac{nT}N}}}
    =0
    .
  \eeq
  This completes the proof of Lemma~\ref{lem:approxa2}.
\end{proof}

\subsection{Time-continuous approximations for deterministic differential equations}
\label{subsec:conteuler}
\begin{lemma}
\label{lem:interpol}
  Let $d\in\N$,
  $T\in(0,\infty)$,
  $Y\in C([0,T],\R^d)$,
  let $\norm{\cdot}\colon \R^d\to[0,\infty)$ be a norm,
  let $\mc Y^N\colon[0,T]\to\R^d$, $N\in\N$, satisfy
    for all
      $N\in\N$,
      $n\in\{0,1,\dots,N-1\}$,
      $t\in\bbr{\tfrac{nT}N,\tfrac{(n+1)T}N}$
    that
    \beq
    \label{eq:interpYN}
      \mc Y^N(t)=\bp{1-\bp{\tfrac{tN}T-n}}\mc Y^N\bp{\tfrac{nT}N}+\bp{\tfrac{tN}T-n}\mc Y^N\bp{\tfrac{(n+1)T}N},
    \eeq
    and assume that
    \beq
    \label{eq:pointconv}
      \limsup_{N\to\infty}\bbbbr{\sup_{n\in\{0,1,\dots,N\}}\bnorm{\mc Y^N\bp{\tfrac{nT}N}-Y\bp{\tfrac{nT}N}}}
      =
      0
      .
    \eeq
  Then it holds that
    \beq
    \label{eq:YNconv}
      \limsup_{N\to\infty}\bbbbr{\sup_{t\in[0,T]}\norm{\mc Y^N(t)-Y(t)}}
      =
      0
      .
    \eeq
\end{lemma}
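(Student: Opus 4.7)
The plan is to bound $\norm{\mc Y^N(t)-Y(t)}$ uniformly in $t\in[0,T]$ by (a) the nodal error, which is controlled by hypothesis~\eqref{eq:pointconv}, plus (b) an oscillation term of $Y$ on intervals of length $T/N$, which is controlled by the uniform continuity of $Y$ on the compact interval $[0,T]$.

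First I would fix $N\in\N$ and $t\in[0,T]$. Choosing $n\in\{0,1,\dots,N-1\}$ with $t\in\bbr{\tfrac{nT}N,\tfrac{(n+1)T}N}$ (or handling the endpoint $t=T$ separately), I would write $\lambda=\tfrac{tN}T-n\in[0,1]$ and observe that $Y(t)=(1-\lambda)Y(t)+\lambda Y(t)$, so that \eqref{eq:interpYN} and the triangle inequality yield
\beq
  \norm{\mc Y^N(t)-Y(t)}
  \leq (1-\lambda)\bnorm{\mc Y^N\bp{\tfrac{nT}N}-Y(t)}+\lambda\bnorm{\mc Y^N\bp{\tfrac{(n+1)T}N}-Y(t)}.
\eeq
Then, splitting each term further through the nodal values $Y(nT/N)$ and $Y((n+1)T/N)$, I obtain
\ba
  \norm{\mc Y^N(t)-Y(t)}
  &\leq \bbbbr{\sup_{k\in\{0,1,\dots,N\}}\bnorm{\mc Y^N\bp{\tfrac{kT}N}-Y\bp{\tfrac{kT}N}}}
  \\&\quad +\bbbbr{\sup_{s,u\in[0,T],\,\abs{s-u}\leq T/N}\norm{Y(s)-Y(u)}}.
\ea

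Next I would pass to the supremum over $t\in[0,T]$ on the left-hand side, noting that the right-hand side is independent of $t$. The first term on the right tends to $0$ as $N\to\infty$ by the hypothesis~\eqref{eq:pointconv}. The second term tends to $0$ as $N\to\infty$ because $Y\in C([0,T],\R^d)$ is uniformly continuous on the compact interval $[0,T]$ (with respect to the norm $\norm{\cdot}$, using that $[0,T]$ is compact and that $Y$ is continuous on it). Combining these two observations yields~\eqref{eq:YNconv}.

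The proof is conceptually routine; the only mild subtlety is invoking the uniform continuity of $Y$ correctly and making sure to cover the boundary case $t=T$, which is immediate since $\mc Y^N(T)=\mc Y^N((N-1)T/N+T/N)$ falls into the interval indexed by $n=N-1$ with $\lambda=1$. I do not expect any substantial obstacle.
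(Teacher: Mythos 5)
Your proposal is correct and follows essentially the same route as the paper: decompose $\mc Y^N(t)-Y(t)$ via the convex interpolation weights, pass through the nodal values with the triangle inequality, and control the two resulting terms by hypothesis~\eqref{eq:pointconv} and by the uniform continuity of $Y$ on the compact interval $[0,T]$. The only cosmetic difference is that you phrase the continuity contribution through an explicit modulus of continuity term, while the paper runs an $\eps/2$-argument with thresholds $M_{\eps/2}$ and $T/\delta_{\eps/2}$; the substance is identical, including the observation that $t=T$ is covered by the interval with $n=N-1$.
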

\begin{proof}[Proof of Lemma~\ref{lem:interpol}]
  First, note that 
    \eqref{eq:pointconv}
  ensures that there exist
    $M_\eps\in\N$, $\eps\in(0,\infty)$, 
  such that for all
    $N\in\N\cap[M_\eps,\infty)$
  it holds that
  \beq
  \label{eq:limsupZ}
    \sup_{n\in\{0,1,\dots,N\}}\bnorm{\mc Y^N\bp{\tfrac{nT}N}-Y\bp{\tfrac{nT}N}}<\eps
    .
  \eeq
  Next observe that 
    the hypothesis that $Y\colon[0,T]\to\R^d%
      $ is a continuous function
  implies that
    $Y$ is a uniformly continuous function.
    This
  ensures that there exist
    $\delta_\eps\in(0,\infty)$, $\eps\in(0,\infty)$,
  such that for all
    $\eps\in(0,\infty)$,
    $s,t\in[0,T]$ with $\abs{s-t}\leq\delta_\eps$
  it holds that
  \beq
    \label{eq:unifcont}
    \norm{Y(s)-Y(t)}
    <
    \eps
    .
  \eeq
  In the next step we observe that for all
    $\eps\in(0,\infty)$,
    $N\in\N\cap[T/\delta_{\eps/2},\infty)$,
    $n\in\{0,1\dots,N-1\}$,
    $t\in\bigl[\tfrac{nT}N,\tfrac{(n+1)T}N\bigr]$,
    $k\in\{n,n+1\}$
  it holds that
  \beq
    \babs{t-\tfrac{kT}{N}}
    \leq
    \tfrac TN
    \leq
    \delta_{\eps/2}
    .
  \eeq
    This,
    \eqref{eq:limsupZ},
    \eqref{eq:unifcont},
    and the triangle inequality
  show that for all
    $\eps\in(0,\infty)$,
    $N\in\N\cap[\max\{T/\delta_{\eps/2},M_{\eps/2}\},\infty)$,
    $n\in\{0,1,\dots,N-1\}$,
    $t\in\bigl[\tfrac{nT}N,\tfrac{(n+1)T}N\bigr]$,
    $k\in\{n,n+1\}$
  it holds that
  \beq
    \bnorm{\mc Y^N\bp{\tfrac{kT}N}-Y(t)}
    \leq
    \bnorm{\mc Y^N\bp{\tfrac{kT}N}-Y\bp{\tfrac{kT}{N}}}+\bnorm{Y\bp{\tfrac{kT}{N}}-Y(t)}
    <
    \tfrac{\eps}2+\tfrac\eps2
    =
    \eps
    .
  \eeq
    The triangle inequality
    and \eqref{eq:interpYN}
    hence
  demonstrate that for all
    $\eps\in(0,\infty)$,
    $N\in\N\cap[\max\{T/\delta_{\eps/2},M_{\eps/2}\},\infty)$,
    $n\in\{0,1,\dots,N-1\}$,
    $t\in\bigl[\tfrac{nT}N,\tfrac{(n+1)T}N\bigr]$
  it holds that
  \ba
    &\norm{\mc Y^N(t)-Y(t)}\\
    &=
    \bnorm{\bbr{\bp{1-\bp{\tfrac{tN}T-n}}\mc Y^N\bp{\tfrac{nT}N}+\bp{\tfrac{tN}T-n}\mc Y^N\bp{\tfrac{(n+1)T}N}}-Y(t)}\\
    &=
    \bnorm{\bp{1-\bp{\tfrac{tN}T-n}}\bp{\mc Y^N\bp{\tfrac{nT}N}-Y(t)}+\bp{\tfrac{tN}T-n}\bp{\mc Y^N\bp{\tfrac{(n+1)T}N}-Y(t)}}\\
    &\leq
    \bnorm{\bp{1-\bp{\tfrac{tN}T-n}}\bp{\mc Y^N\bp{\tfrac{nT}N}-Y(t)}}+\bnorm{\bp{\tfrac{tN}T-n}\bp{\mc Y^N\bp{\tfrac{(n+1)T}N}-Y(t)}}\\
    &=
    \bp{1-\bp{\tfrac{tN}T-n}}\bnorm{\mc Y^N\bp{\tfrac{nT}N}-Y(t)}+\bp{\tfrac{tN}T-n}\bnorm{\mc Y^N\bp{\tfrac{(n+1)T}N}-Y(t)}\\
    &<
    \bp{1-\bp{\tfrac{tN}T-n}}\eps+\bp{\tfrac{tN}T-n}\eps
    \\&=
    \eps
    .
  \ea
  Therefore, we obtain that for all
    $\eps\in(0,\infty)$,
    $N\in\N\cap[\max\{T/\delta_{\eps/2},M_{\eps/2}\},\infty)$
  it holds that
  \beq
    \sup_{t\in[0,T]}\norm{\mc Y^N(t)-Y(t)}
    \leq
    \eps
    .
  \eeq
  This establishes \eqref{eq:YNconv}. 
  The proof of Lemma~\ref{lem:interpol} is thus completed.
\end{proof}

\subsection{Measurability properties for solutions of SDEs}
\label{subsec:meassdes}

\begin{lemma}
\label{lem:intmeas}
  Let $d\in\N$,
  $T\in[0,\infty)$,
  $f\in C([0,T]\times\R^d,\R^d)$,
  $\xi\in\R^d$,
  let $(\Omega,\mc F,\PP)$ be a probability space,
  let $W\colon [0,T]\times \Omega\to\R^d$ be a stochastic process 
    with continuous sample paths,
  let $\norm{\cdot}\colon\R^d\to[0,\infty)$ be a norm,
  assume for all $r\in(0,\infty)$ that
  \beq
  \label{eq:flip2}
    \sup_{t\in[0,T]}\,\sup_{\substack{x,y\in\R^d,\,x\neq y,\\ \norm x+\norm y\leq r}} \frac{\norm{f(t,x)-f(t,y)}}{\norm{x-y}}<\infty,
  \eeq
  and let $Y\colon[0,T]\times \Omega\to \R^d$ satisfy for all 
    $t\in[0,T]$,
    $\omega\in\Omega$
  that
    $\bp{[0,T]\ni s\mapsto Y(s,\omega)\in\R^d}\in C([0,T],\R^d)$
  and
  \beq
  \label{eq:Yint2}
    Y(t,\omega)=\xi+\int_0^t f(s,Y(s,\omega))\,\diff s+W(t,\omega).
  \eeq
  Then it holds that $Y$ is a stochastic process.
\end{lemma}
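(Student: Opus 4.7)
The plan is to approximate $Y$ pathwise by Euler schemes built from $W$ and $f$, verify that these approximations are manifestly stochastic processes, and then transfer measurability to $Y$ via the pathwise convergence results already established. Since the definition of a stochastic process only requires $\mathcal{F}$/$\mathcal{B}(\R^d)$-measurability of $\omega\mapsto Y(t,\omega)$ for every $t\in[0,T]$, it suffices to exhibit, for each $t$, a sequence of $\mathcal{F}$/$\mathcal{B}(\R^d)$-measurable functions $\Omega\ni\omega\mapsto\mathcal{Y}^N(t,\omega)\in\R^d$ that converges pathwise to $\omega\mapsto Y(t,\omega)$ and then apply Lemma~\ref{lem:convergence_measurability_d}.

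First, I would assume without loss of generality that $T>0$ and, for each $N\in\N$, define a discrete-time Euler approximation $\mathcal{Y}^N\colon\{0,1,\ldots,N\}\times\Omega\to\R^d$ recursively by $\mathcal{Y}^N(0,\omega)=\xi$ and
\beq
\mathcal{Y}^N(n+1,\omega)=\mathcal{Y}^N(n,\omega)+\tfrac{T}{N}f\bp{\tfrac{nT}{N},\mathcal{Y}^N(n,\omega)}-W\bp{\tfrac{nT}{N},\omega}+W\bp{\tfrac{(n+1)T}{N},\omega}
\eeq
for $n\in\{0,1,\ldots,N-1\}$ and $\omega\in\Omega$. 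A straightforward induction on $n$ shows that $\omega\mapsto\mathcal{Y}^N(n,\omega)$ is $\mathcal{F}$/$\mathcal{B}(\R^d)$-measurable: the case $n=0$ is trivial, and the induction step uses continuity (hence Borel measurability) of $f$ together with the assumption that $W$ is a stochastic process, so that $\omega\mapsto W(tN^{-1}T,\omega)$ is measurable for every integer multiple of $T/N$. I would then extend $\mathcal{Y}^N$ to $[0,T]\times\Omega$ by the piecewise linear interpolation in~\eqref{eq:interpYN} of Lemma~\ref{lem:interpol}; for each fixed $t\in[0,T]$, the map $\omega\mapsto\mathcal{Y}^N(t,\omega)$ is then a convex combination of two measurable functions and hence itself $\mathcal{F}$/$\mathcal{B}(\R^d)$-measurable.

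Next, I would transfer Lemma~\ref{lem:approxa2} and Lemma~\ref{lem:interpol} into pathwise statements. For every fixed $\omega\in\Omega$ the hypotheses of Lemma~\ref{lem:approxa2} are satisfied with $w\is W(\cdot,\omega)$, $Y\is Y(\cdot,\omega)$, and $\mathcal{Y}^N\is\mathcal{Y}^N(\cdot,\omega)|_{\{0,\ldots,N\}}$: indeed, $W(\cdot,\omega)$ is continuous by assumption, $Y(\cdot,\omega)$ is continuous by hypothesis and satisfies~\eqref{eq:Yint2}, the local Lipschitz bound~\eqref{eq:flip2} is~\eqref{eq:flip3}, and the recursion~\eqref{eq:recYN3} holds by construction. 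Lemma~\ref{lem:approxa2} therefore yields pointwise convergence of $\mathcal{Y}^N(nT/N,\omega)$ to $Y(nT/N,\omega)$ uniformly in $n$, and then applying Lemma~\ref{lem:interpol} pathwise gives
\beq
\limsup_{N\to\infty}\bbbbr{\sup_{t\in[0,T]}\bnorm{\mathcal{Y}^N(t,\omega)-Y(t,\omega)}}=0
\eeq
for every $\omega\in\Omega$.

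Finally, for each fixed $t\in[0,T]$ the above yields $\limsup_{N\to\infty}\norm{\mathcal{Y}^N(t,\omega)-Y(t,\omega)}=0$ for every $\omega\in\Omega$, and since each $\omega\mapsto\mathcal{Y}^N(t,\omega)$ is $\mathcal{F}$/$\mathcal{B}(\R^d)$-measurable, Lemma~\ref{lem:convergence_measurability_d} shows that $\omega\mapsto Y(t,\omega)$ is $\mathcal{F}$/$\mathcal{B}(\R^d)$-measurable. Thus $Y$ is a stochastic process. The only mild obstacle is the bookkeeping confirming that the hypotheses of Lemma~\ref{lem:approxa2} are genuinely satisfied pathwise; the measurability argument itself is then just induction plus an appeal to Lemma~\ref{lem:convergence_measurability_d}.
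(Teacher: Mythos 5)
Your overall strategy is exactly the paper's: build the pathwise Euler scheme with compensated increments of $W$, show by induction that the discrete scheme and its piecewise linear interpolation are measurable in $\omega$ for each fixed time, obtain pathwise uniform convergence via Lemma~\ref{lem:approxa2} and Lemma~\ref{lem:interpol}, and conclude with Lemma~\ref{lem:convergence_measurability_d}. However, there is one concrete defect: you initialize the scheme by $\mathcal{Y}^N(0,\omega)=\xi$, whereas Lemma~\ref{lem:approxa2} requires $\mathcal{Y}^N(0)=Y(0)$, and in the setting of Lemma~\ref{lem:intmeas} one has $Y(0,\omega)=\xi+W(0,\omega)$ by \eqref{eq:Yint2}; the process $W$ here is only assumed to be a stochastic process with continuous sample paths, so $W(0,\omega)$ need not vanish (and indeed the lemma is later invoked with $W$ replaced by $\sigma W$ for a general continuous process). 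With your initialization the hypotheses of Lemma~\ref{lem:approxa2} are not satisfied when $W(0,\omega)\neq 0$, and in that case the scheme in fact approximates the solution started at $\xi$ rather than at $\xi+W(0,\omega)$, so the claimed pathwise convergence to $Y(\cdot,\omega)$ fails; the assertion that the hypotheses of Lemma~\ref{lem:approxa2} ``hold by construction'' silently skips the initial-value requirement.

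The repair is immediate and is what the paper does: set $\mathcal{Y}^N(0,\omega)=Y(0,\omega)=\xi+W(0,\omega)$. Measurability of the base case is then not the constant map but follows from the assumption that $W$ is a stochastic process, so $\Omega\ni\omega\mapsto\xi+W(0,\omega)\in\R^d$ is $\mc F$/$\mc B(\R^d)$-measurable, and your induction step, the interpolation argument (each $\mathcal{Y}^N(t,\cdot)$ is a fixed convex combination of two measurable maps, cf.\ \eqref{eq:interpYN}), the pathwise application of Lemmas~\ref{lem:approxa2} and~\ref{lem:interpol} (using \eqref{eq:flip2} for \eqref{eq:flip3} and \eqref{eq:recYN3} for the recursion), and the final appeal to Lemma~\ref{lem:convergence_measurability_d} all go through unchanged. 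With this correction your argument coincides with the paper's proof.
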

\begin{proof}[Proof of Lemma~\ref{lem:intmeas}]
  Throughout this proof
    assume w.l.o.g.\ that $T>0$,
    let $\mc Z^N\colon \{0,1,\dots,N\}\times\Omega\to\R^d$, $N\in\N$,
      be the sequence of functions which satisfies
      for all
        $N\in\N$,
        $n\in\{0,1,\dots,N-1\}$,
        $\omega\in\Omega$
      that
        $\mc Z^N(0,\omega)=Y(0,\omega)$ and
        \beq
        \label{eq:recZN}
          \mc Z^N(n+1,\omega)
          =
          \mc Z^N(n,\omega)+\tfrac TNf\bp{\tfrac{nT}N,\mc Z^N(n,\omega)}
            -W\bp{\tfrac{nT}N,\omega}
            +W\bp{\tfrac{(n+1)T}N,\omega},
        \eeq
    and let $\mc Y^N\colon[0,T]\times\Omega \to\R^d$, $N\in\N$, 
      be the sequence of functions which satisfies 
      for all
        $N\in\N$, 
        $n\in\{0,1,\dots,N-1\}$,
        $t\in\bigl(\frac{nT}N,\frac{(n+1)T}N\bigr]$,
        $\omega\in\Omega$
      that $\mc Y^N(0,\omega)=\mc Z^N(0,\omega)$ and
      \beq
      \label{eq:defYN}
        \mc Y^N(t,\omega)=\bp{1-\bp{\tfrac{tN}T-n}}\mc Z^N(n,\omega)+\bp{\tfrac{tN}T-n}\mc Z^N(n+1,\omega)
        .
      \eeq
  Note that
    \eqref{eq:flip2},
    \eqref{eq:Yint2},
    \eqref{eq:recZN},
    and Lemma~\ref{lem:approxa2}
    (with
      $d\is d$,
      $T\is T$,
      $f\is f$,
      $w\is \bp{[0,T]\ni t\mapsto W(t,\omega)\in\R^d}$,
      $\xi\is\xi$,
      $\norm\cdot\is\norm\cdot$,
      $Y\is\bp{[0,T]\ni t\mapsto Y(t,\omega)\in\R^d}$,
      $(\mc Y^N)_{N\in\N}\is\bp{\{0,1,\dots,N\}\ni n\mapsto\mc Z^N(n,\omega)\in\R^d}{}_{N\in\N}$
      for $\omega\in\Omega$
    in the notation of Lemma~\ref{lem:approxa2})
  show that for all
    $\omega\in\Omega$
  it holds that
  \beq
  \label{eq:ZNconv}
    \limsup_{N\to\infty}\bbbbr{
      \sup_{n\in\{0,1,\dots,N\}}
        \bnorm{\mc Z^N(n,\omega)-Y\bp{\tfrac{nT}N,\omega}}
    }
    =0
    .
  \eeq
  In the next step we observe that
    \eqref{eq:defYN}
  implies that for all
    $N\in\N$,
    $n\in\{0,1,\dots,N-1\}$,
    $\omega\in\Omega$
  it holds that
  \ba
    \mc Y^N\bp{\tfrac{(n+1)T}N,\omega}
    &=
    \bp{1-\bp{(n+1)-n}}\mc Z^N(n,\omega)+\bp{(n+1)-n}\mc Z^N(n+1,\omega)
    \\&=
    \mc Z^N(n+1,\omega)
    .
  \ea
  Combining
    this
  with
    the fact that
      for all
        $N\in\N$,
        $\omega\in\Omega$
      it holds that
        $\mc Y^N(0,\omega)=\mc Z^N(0,\omega)$
  shows that for all
    $N\in\N$,
    $n\in\{0,1,\dots,N\}$,
    $\omega\in\Omega$
  it holds that
  \beq
  \label{eq:YNZN}
    \mc Y^N\bp{\tfrac{nT}N,\omega}
    =
    \mc Z^N(n,\omega)
    .
  \eeq
    This
    and \eqref{eq:defYN}
  prove that for all
    $N\in\N$, 
    $n\in\{0,1,\dots,N-1\}$,
    $t\in\bigl[\frac{nT}N,\frac{(n+1)T}N\bigr]$,
    $\omega\in\Omega$
  it holds that 
    $\mc Y^N(0,\omega)=\mc Z^N(0,\omega)$ 
  and
  \beq
    \mc Y^N(t,\omega)=\bp{1-\bp{\tfrac{tN}T-n}}\mc Y^N\bp{\tfrac{nT}N,\omega}+\bp{\tfrac{tN}T-n}\mc Y^N\bp{\tfrac{(n+1)T}N,\omega}
    .
  \eeq
  Combining
    this,
    \eqref{eq:ZNconv},
    and \eqref{eq:YNZN},
  with
    Lemma~\ref{lem:interpol}
    (with
      $d\is d$,
      $T\is T$,
      $Y\is([0,T]\ni t\mapsto Y(t,\omega)\in\R^d)$,
      $\norm\cdot\is\norm\cdot$,
      $(\mc Y^N)_{N\in\N}\is([0,T]\ni t\mapsto \mc Y^N(t,\omega)\in\R^d)_{N\in\N}$
      for $\omega\in\Omega$
    in the notation of Lemma~\ref{lem:interpol})
  establishes that for all
    $\omega\in\Omega$
  it holds that
  \beq
  \label{eq:limYNY}
    \limsup_{N\to\infty}\bbbbr{\sup_{t\in[0,T]}\norm{\mc Y^N(t,\omega)-Y(t,\omega)}}
    =0
    .
  \eeq
  %
    Next observe that 
    the assumption that 
      $\Omega\ni\omega\mapsto W(0,\omega)\in\R^d$ is an $\mc F$/$\mc B(\R^d)$-measurable function
    and the fact that
      for all
        $N\in\N$,
        $\omega\in\Omega$
      it holds that
        $\mc Z^N(0,\omega)=\xi+W(0,\omega)$
    imply that for all
      $N\in\N$
    it holds that 
      \beq
      \label{eq:ZNnmeasind0}
        \Omega\ni\omega\mapsto \mc Z^N(0,\omega)\in\R^d
      \eeq
    is an $\mc F$/$\mc B(\R^d)$-measurable function.
  Furthermore, observe that
    \eqref{eq:recZN},
    the hypothesis that $f$ is a continuous function,
    and the hypothesis that 
      for all 
        $t\in[0,T]$
      it holds that 
        $\Omega\ni\omega\mapsto W(t,\omega)\in\R^d$ is an $\mc F$/$\mc B(\R^d)$-measurable function
    show that for all 
      $N\in\N$,
      $n\in\{m\in\{0,1,\dots,N-1\}\colon(\Omega\ni\omega\mapsto \mc Z^N(m,\omega)\in\R^d)\text{ is an $\mc F$/$\mc B(\R^d)$-measurable function}\}$
    it holds that
      \beq
        \Omega\ni\omega\mapsto \mc Z^N(n+1,\omega)\in\R^d
      \eeq 
      is an $\mc F$/$\mc B(\R^d)$-measurable function.
  Combining
    this
    and \eqref{eq:ZNnmeasind0}
  with
    the induction principle
  proves that for all
    $N\in\N$,
    $n\in\{0,1,\dots,N\}$
  it holds that
  \beq
    \Omega\ni\omega\mapsto \mc Z^N(n,\omega)\in\R^d
  \eeq
  is an $\mc F$/$\mc B(\R^d)$-measurable function.
    This
    and \eqref{eq:defYN}
  demonstrate that for all
    $N\in\N$,
    $t\in[0,T]$
  it holds that
  $
    \Omega\ni\omega\mapsto \mc Y^N(t,\omega)\in\R^d
  $
  is an $\mc F$/$\mc B(\R^d)$-measurable function.
  Combining
    this
    and \eqref{eq:limYNY}
  with
    Lemma~\ref{lem:convergence_measurability_d}
    (with
      $(\Omega,\mc F)\is(\Omega,\mc F)$,
      $d\is d$,
      $\norm\cdot\is\norm\cdot$,
      $Y\is(\Omega\ni\omega\mapsto Y(t,\omega)\in\R^d)$,
      $(X_n)_{n\in\N}\is(\Omega\ni\omega\mapsto \mc Y^N(t,\omega)\in\R^d)_{N\in\N}$
      for $t\in[0,T]$
    in the notation of Lemma~\ref{lem:convergence_measurability_d})
  ensures that for all
    $t\in[0,T]$
  it holds that
  \beq
    \Omega\ni\omega\mapsto Y(t,\omega)\in\R^d
  \eeq
  is an $\mc F$/$\mc B(\R^d)$-measurable function.
  The proof of Lemma~\ref{lem:intmeas} is thus completed.
\end{proof}

\section{Differentiability with respect to the initial value for SDEs}\label{section5}

In this section we establish in Lemma~\ref{lem:existX} in Subsection~\ref{subsec:diffsdes} 
below an existence, 
uniqueness, and regularity result for solutions of certain additive 
noise driven SDEs. Our proof of Lemma~\ref{lem:existX} exploits the related regularity 
results for solutions of certain ODEs in Lemmas~\ref{lem:solnlip}--\ref{lem:diffy} below. 
For the reader's convenience we include in this section also detailed proofs for
Lemmas~\ref{lem:solnlip}--\ref{lem:existX}.

\subsection{Local Lipschitz continuity for deterministic DEs}
\begin{lemma}
  \label{lem:solnlip}
  Let $d\in\N$,
  $w\in\R^d$,
  $T\in[0,\infty)$,
  $f\in C^{0,1}([0,T]\times\R^d,\R^d)$,
  let $\norm\cdot\colon\R^d\to[0,\infty)$ be a norm,
  and let $y^x\in C([0,T],\R^d)$, $x\in\R^d$, 
    be functions which
    satisfy for all
      $x\in\R^d$,
      $t\in[0,T]$
    that 
      \beq
      \label{eq:diffeqyx}
      y^x(t)=x+\int_0^t f(s,y^x(s))\,\diff s.
      \eeq
  Then 
    there exist
      $r,L\in(0,\infty)$
    such that for all
      $v\in\{u\in\R^d\colon\norm{u-w}\leq r\}$,
      $t\in[0,T]$
    it holds that
    \beq
      \norm{y^v(t)-y^w(t)}
      \leq 
      L\norm{v-w}
      .
    \eeq
\end{lemma}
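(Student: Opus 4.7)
The plan is to reduce the claim to the standard Gr\"onwall-plus-continuation argument for local Lipschitz dependence of ODE solutions on their initial data. First, since $y^w\in C([0,T],\R^d)$ and $[0,T]$ is compact, the real number $M \is 1+\sup_{t\in[0,T]}\norm{y^w(t)}$ is finite. The hypothesis $f\in C^{0,1}([0,T]\times\R^d,\R^d)$ together with compactness of the slab $[0,T]\times\{u\in\R^d\colon\norm u\leq M\}$ then yields a finite spatial Lipschitz constant $K\in[0,\infty)$ of $f$ on this slab, in the sense that $\norm{f(t,u_1)-f(t,u_2)}\leq K\norm{u_1-u_2}$ for all $t\in[0,T]$ and all $u_1,u_2$ in the closed $M$-ball around $0$. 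I would obtain this via continuity of the spatial partial derivative of $f$, compactness of the slab, and the mean value inequality.

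Next, I would fix the constants in the conclusion by setting $L \is e^{KT}$ and $r \is \tfrac12 e^{-KT}$. For an arbitrary $v$ with $\norm{v-w}\leq r$, I would introduce the exit time
\beq
\tau \is \sup\Bigl\{t\in[0,T]\colon \sup_{s\in[0,t]}\norm{y^v(s)-y^w(s)}\leq \tfrac12\Bigr\}.
\eeq
By continuity and $\norm{v-w}\leq r<\tfrac12$ we have $\tau>0$, and on $[0,\tau]$ the inequality $\norm{y^v(t)}\leq M$ holds, so the Lipschitz estimate for $f$ applies. Subtracting the two integral equations from~\eqref{eq:diffeqyx} and taking norms yields
\beq
\norm{y^v(t)-y^w(t)}\leq \norm{v-w}+K\int_0^t \norm{y^v(s)-y^w(s)}\,\diff s
\eeq
for all $t\in[0,\tau]$; Gr\"onwall's integral inequality then gives $\norm{y^v(t)-y^w(t)}\leq e^{KT}\norm{v-w} = L\norm{v-w}$ on $[0,\tau]$.

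The last step is to promote this estimate from $[0,\tau]$ to $[0,T]$ by showing $\tau=T$. Evaluating the bound at $t=\tau$ gives $\norm{y^v(\tau)-y^w(\tau)}\leq Lr=\tfrac12$, which combined with continuity of $y^v-y^w$ rules out $\tau<T$, since otherwise a small interval beyond $\tau$ would still satisfy the condition defining $\tau$, contradicting the supremum. Hence $\tau=T$ and the estimate holds on the whole interval. The only step that is not entirely routine is this continuation argument: the values $L=e^{KT}$ and $r=\tfrac12 e^{-KT}$ have to be tuned precisely so that the Gr\"onwall bound at the exit time stays strictly inside the ball on which the spatial Lipschitz constant of $f$ was extracted. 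Everything else is a direct consequence of compactness of $[0,T]$, the $C^{0,1}$ hypothesis, and Gr\"onwall's inequality.
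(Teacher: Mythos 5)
Your proof follows essentially the same route as the paper's: extract a spatial Lipschitz constant $K$ for $f$ from the $C^{0,1}$ hypothesis and compactness of a slab determined by the reference trajectory $y^w$ (the paper bounds $\tfrac{\partial}{\partial x}f$ on the ball of radius $R=1+\sup_{t\in[0,T]}\norm{y^w(t)-w}$ around $w$, you use the ball of radius $M$ around the origin), then combine Gronwall's integral inequality with an exit-time/continuation argument whose constants are tuned so that the Gronwall bound keeps $y^v$ inside the region where the Lipschitz estimate is valid.

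The one step to tighten is the continuation. With $r=\tfrac12 e^{-KT}$ and $L=e^{KT}$, the Gronwall bound at the exit time is only $\norm{y^v(\tau)-y^w(\tau)}\leq Lr=\tfrac12$, i.e.\ exactly the threshold appearing in the definition of $\tau$, and a non-strict bound of $\tfrac12$ at $\tau$ together with continuity of $y^v-y^w$ does not by itself produce an interval beyond $\tau$ on which the defining condition $\leq\tfrac12$ persists. (If $K>0$ and $\tau<T$ you do get the strict bound $e^{K\tau}\norm{v-w}<\tfrac12$, but in the degenerate case $K=0$ with $\norm{v-w}=r$ both this strictness and your claim $r<\tfrac12$ fail, and one must instead argue that $y^v$ stays strictly inside the $M$-ball, where $f$ is spatially constant.) The clean repair is to build in a margin: take, say, $r=\tfrac14 e^{-KT}$, so that $Lr=\tfrac14<\tfrac12$ and continuity genuinely extends the condition past $\tau$. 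This margin is exactly what the paper arranges through the slack between its a priori bound $\norm{y^v(t)-w}\leq R-\tfrac12$ and the exit radius $R$, together with the strict hypothesis $\norm{v-w}<(2e^{KT})^{-1}$. With that one adjustment your argument is complete.
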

\begin{proof}[Proof of Lemma~\ref{lem:solnlip}]
  Throughout this proof
    assume w.l.o.g.\ that $T>0$,
    let $D\colon [0,T]\times\R^d\to\R^{d\times d}$ 
      be the function which satisfies for all
        $t\in[0,T]$,
        $x\in\R^d$
      that
      \beq
        D(t,x)=\tfrac{\partial}{\partial x} f(t,x),
      \eeq
    let $R\in(0,\infty)$
      be the real number which satisfies
    \beq
      R=1+\bbbbr{\sup_{t\in[0,T]} \norm{y^w(t)-w}},
    \eeq
    let $K\in[0,\infty]$ satisfy
    \beq
      K
      =
      \bbbbr{
      \sup_{x\in\{u\in\R^d\colon\norm{u-w}\leq R\}}\,
        \sup_{t\in[0,T]}\,
          \sup_{h\in\R^d\setminus\{0\}}
          \bbbp{
            \frac{\norm{D(t,x)h}}{\norm h}
          }
      }
      ,
    \eeq
    and let $\tau_{x}\in[0,T]$, $x\in\R^d$,
      be the real numbers which satisfy for all
      $x\in\R^d$
    that
    \beq
    \label{eq:deftaux}
      \tau_{x}=\inf(\{T\}\cup\{t\in[0,T]\colon \norm{y^x(t)-w}\geq R\}).
    \eeq
  Note that
  \ba
    \label{eq:KR}
    K
    &=
    \bbbbr{
    \sup_{x\in\{u\in\R^d\colon \norm {u-w}\leq R\}}\,
      \sup_{t\in[0,T]}\,
        \sup_{h\in\R^d\setminus\{0\}}\,
          \bbbnorm{D(t,x)\bbbp{\frac{h}{\norm h}}}
    }
    \\&=
    \sup_{\substack{(x,t,v)\in\R^d\times[0,T]\times\R^d,\\\norm {x-w}\leq R,\,\norm v=1}}\,
      \norm{D(t,x)v}
    .
  \ea
  In addition, observe that
    the hypothesis that $f\in C^{0,1}([0,T]\times\R^d,\R^d)$
  implies that
    $\R^d\times[0,T]\times\R^d\ni(x,t,v)\mapsto D(t,x)v\in\R^d$
      is a continuous function.
  Combining 
    this 
  with
    \eqref{eq:KR}
  establishes that
  \beq
  \label{eq:KRfin}
    K<\infty
    .
  \eeq
    The fundamental theorem of calculus 
    hence
  implies that for all
    $t\in[0,T]$,
    $u,v\in\{z\in\R^d\colon \norm{z-w}\leq R\}$
  it holds that
  \ba
  \label{eq:fllip}
    \norm{f(t,u)-f(t,v)}
    &=
    \bnorm{\bbr{f(t,v+s(u-v))}_{s=0}^{s=1}}
    \\&=
    \bbbnorm{\int_{0}^1 D(t,v+s(u-v))(u-v)\,\diff s}
    \\&\leq
    \int_{0}^1 \norm{D(t,v+s(u-v))(u-v)}\,\diff s
    \\&\leq
    \int_{0}^1 K\norm{u-v}\,\diff s
    \\&=
    K\norm{u-v}
    .
  \ea
  Moreover, note that 
    \eqref{eq:deftaux}
  ensures that for all
    $v\in\R^d$,
    $t\in[0,T]\cap(-\infty,\tau_{v})$
  it holds that
  \beq
    \norm{y^v(t)-w}\leq R.
  \eeq
    The triangle inequality,
    \eqref{eq:diffeqyx},
    and \eqref{eq:fllip}
    hence
  show that for all
    $v\in\R^d$,
    $t\in[0,T]\cap(-\infty,\tau_v)$
  it holds that
  \ba
    \norm{y^v(t)-y^w(t)}
    &=
    \bbbnorm{
      \bbbbr{v+\int_0^t f(s,y^v(s))\,\diff s}
      -
      \bbbbr{w+\int_0^t f(s,y^w(s))\,\diff s}
    }
    \\&=
    \bbbnorm{
      v-w+\int_0^t f(s,y^v(s))-f(s,y^w(s))\,\diff s
    }
    \\&\leq
    \norm{v-w}+\bbbnorm{\int_0^t f(s,y^v(s))-f(s,y^w(s))\,\diff s}
    \\&\leq
    \norm{v-w}+\int_0^t \norm{f(s,y^v(s))-f(s,y^w(s))}\,\diff s
    \\&\leq
    \norm{v-w}+\int_0^t K\norm{y^v(s)-y^w(s)}\,\diff s
    .
  \ea
    Gronwall's integral inequality
      (see, e.g., Grohs et al.~\cite[Lemma~2.11]{GHJW}
        (with
          $\alpha\is \norm{v-w}$,
          $\beta\is K$,
          $T\is t$,
          $f\is ([0,t]\ni s\mapsto \norm{y^v(s)-y^w(s)}\in[0,\infty))$
          for
          $v\in\R^d$,
          $t\in[0,T]\cap(-\infty,\tau_v)$
        in the notation of Grohs et al.~\cite[Lemma~2.11]{GHJW})%
      )
    and \eqref{eq:KRfin}
    hence
  establish that for all
    $v\in\R^d$,
    $t\in[0,T]\cap(-\infty,\tau_v)$
  it holds that
  \beq
    \label{eq:ycontx}
    \norm{y^v(t)-y^w(t)}
    \leq
    \norm{v-w}\exp(Kt)
    \leq
    \norm{v-w}\exp(KT)
    .
  \eeq
  The triangle inequality therefore proves that for all
    $v\in\R^d$,
    $t\in[0,T]\cap(-\infty,\tau_v)$
      with $\norm{v-w}<(2\exp(KT))^{-1}$
  it holds that
  \ba
  \label{eq:bndyz}
    \norm{y^v(t)-w}
    &\leq
    \norm{y^v(t)-y^w(t)}+\norm{y^w(t)-w}
    \\&\leq
    \norm{v-w}\exp(KT)+R-1
    \\&\leq
    R-\frac12
    .
  \ea
  In addition, observe that for all
    $v\in\R^d$
      with $\norm{v-w}<(2\exp(KT))^{-1}$
  it holds that
  \beq
    \norm{y^v(0)-w}=\norm{v-w}<1\leq R
    .
  \eeq
    This,
    the assumption that $T>0$,
    and the hypothesis that for all $x\in\R^d$ it holds that $y^x$ is a continuous function
  ensure that for all 
    $v\in\R^d$
      with $\norm{v-w}<(2\exp(KT))^{-1}$
  it holds that
  \beq
    \tau_v>0.
  \eeq
  Combining
    this 
    and the hypothesis that for all $x\in\R^d$ it holds that $y^x$ is a continuous function
  with
    \eqref{eq:bndyz}
  implies that for all
    $v\in\R^d$
      with $\norm{v-w}<(2\exp(KT))^{-1}$
  it holds that 
  \beq
  \label{eq:taueqT}
    \tau_{v}=T.
  \eeq
    The fact that for all $x\in\R^d$ it holds that $y^x$ is a continuous function
    and \eqref{eq:ycontx}
    therefore
  ensure that for all 
    $t\in[0,T]$,
    $v\in\R^d$
      with $\norm{v-w}<(2\exp(KT))^{-1}$
  it holds that
  \beq
    \norm{y^v(t)-y^w(t)}
    \leq
    \norm{v-w}\exp(KT)
    .
  \eeq
  The proof of Lemma~\ref{lem:solnlip} is thus completed.
\end{proof}

\subsection{Differentiability with respect to the initial value for deterministic DEs}

\begin{lemma}
  \label{lem:auxdiff}
  Let $d\in\N$, 
  $T\in[0,\infty)$, 
  $f\in C^{0,1}([0,T]\times\R^d,\R^d)$,
  let $D\colon [0,T]\times\R^d\to\R^{d\times d}$ be a function,
  and let $y^x\in C([0,T],\R^d)$, $x\in\R^d$, 
    be functions which
    satisfy for all
      $t\in[0,T]$,
      $x\in\R^d$
    that 
      $D(t,x)=\frac{\partial}{\partial x} f(t,x)$
      and
      \beq
      \label{eq:yxdiffeq}
      y^x(t)=x+\int_0^t f(s,y^x(s))\,\diff s.
      \eeq
  Then
  \begin{enumerate}[label=(\roman{enumi}),ref=(\roman{enumi})]
    \item \label{enum:lemauxdiff:1}
    it holds that 
      $([0,T]\times \R^d\ni(t,x)\mapsto y^x(t)\in\R^d)\in C^{0,1}([0,T]\times\R^d,\R^d)$ 
    and
    \item \label{enum:lemauxdiff:2}
    it holds for all 
      $x,h\in\R^d$, 
      $t\in[0,T]$ 
    that
    \beq
      \bp{\tfrac{\partial}{\partial x}y^x(t)}(h)
      =
      h+\int_0^t D(s,y^x(s))\bp{\bp{\tfrac\partial{\partial x} y^x(s)}(h)}\,\diff s
      .
    \eeq
  \end{enumerate}
\end{lemma}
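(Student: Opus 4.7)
The plan is to establish the variational equation in the standard way: first show that the candidate derivative, obtained by formally differentiating the integral equation~\eqref{eq:yxdiffeq} with respect to $x$, is the unique solution of a linear integral equation (the variational equation), and then prove that the difference quotients of $y^{x}$ with respect to $x$ converge to this candidate uniformly on $[0,T]$. The joint continuity assertion in~\ref{enum:lemauxdiff:1} will follow by combining the local Lipschitz estimate supplied by Lemma~\ref{lem:solnlip} with the continuity of $t\mapsto y^x(t)$ furnished by the integral equation.

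To set up~\ref{enum:lemauxdiff:2}, fix $x,h\in\R^d$ and let $Z\colon[0,T]\to\R^d$ denote the unique continuous solution of the linear integral equation
\beq
  Z(t)=h+\int_0^t D(s,y^x(s))Z(s)\,\diff s,
\eeq
whose existence and uniqueness follow from the continuity of $s\mapsto D(s,y^x(s))$ together with a Banach fixed point / Picard iteration argument (alternatively, a direct application of Gronwall to the difference of two solutions). For $\eps\in\R\setminus\{0\}$ with $\abs\eps$ small, set $U_\eps(t):=\eps^{-1}(y^{x+\eps h}(t)-y^x(t))$. The fundamental theorem of calculus applied to $\theta\mapsto f(s,y^x(s)+\theta\,\eps U_\eps(s))$ yields
\beq
  \eps^{-1}\bbr{f(s,y^{x+\eps h}(s))-f(s,y^x(s))}
  =\int_0^1 D\bp{s,y^x(s)+\theta\,\eps U_\eps(s)}U_\eps(s)\,\diff\theta,
\eeq
so subtracting the variational equation from the rescaled integral equation for $y^{x+\eps h}-y^x$ gives
\beq
  U_\eps(t)-Z(t)
  =\int_0^t\!\int_0^1\! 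D\bp{s,y^x(s)+\theta\,\eps U_\eps(s)}\bbr{U_\eps(s)-Z(s)}\diff\theta\,\diff s+E_\eps(t),
\eeq
where the error term $E_\eps(t)$ involves $D(s,y^x(s)+\theta\,\eps U_\eps(s))-D(s,y^x(s))$ integrated against $Z(s)$. By Lemma~\ref{lem:solnlip} the quantity $\eps U_\eps(s)=y^{x+\eps h}(s)-y^x(s)$ stays in a compact neighbourhood of $0$ as $\eps\to 0$, so the uniform continuity of $D$ on the relevant compact set forces $\sup_{t\in[0,T]}\norm{E_\eps(t)}\to 0$. Lemma~\ref{lem:solnlip} also bounds $\sup_{\abs\eps\leq\eps_0}\sup_{t\in[0,T]}\norm{U_\eps(t)}$, and in combination with the uniform bound on $\norm{D(s,y^x(s)+\theta\,\eps U_\eps(s))}$ one concludes from Gronwall's inequality that $\sup_{t\in[0,T]}\norm{U_\eps(t)-Z(t)}\to 0$. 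Hence $\frac{\partial}{\partial x}y^x(t)$ exists in every direction $h$, depends linearly on $h$, and coincides with $Z$, which is precisely the claimed identity in~\ref{enum:lemauxdiff:2}.

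Finally, to upgrade to $C^{0,1}$ regularity in~\ref{enum:lemauxdiff:1}, one observes that the map $(t,x,h)\mapsto \bp{\tfrac\partial{\partial x}y^x(t)}(h)$ is, for each $x$, the unique solution of a linear integral equation whose coefficient $D(\cdot,y^\cdot(\cdot))$ depends continuously on $(s,x)$ (by continuity of $D$ and of $x\mapsto y^x$). A standard Gronwall comparison between the variational equations at $x$ and at a nearby point $x'$, using the local Lipschitz continuity of $x\mapsto y^x$ established in Lemma~\ref{lem:solnlip} together with the local uniform continuity of $D$, shows that $(t,x)\mapsto \tfrac\partial{\partial x}y^x(t)$ is continuous as a map into $L(\R^d,\R^d)$. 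Joint continuity of $(t,x)\mapsto y^x(t)$ itself is easier: continuity in $x$ uniformly in $t$ comes from Lemma~\ref{lem:solnlip}, and continuity in $t$ uniformly on compact sets in $x$ comes from the integral equation and local boundedness of $f$. The principal obstacle is the convergence of the difference quotients: it requires simultaneously controlling the Gronwall term (which demands an a priori bound on $\norm{U_\eps}$, supplied by Lemma~\ref{lem:solnlip}) and the error coming from the local variation of $D$, which in turn hinges on the fact that $y^{x+\eps h}(s)$ remains in a fixed compact set as $\eps\to 0$.
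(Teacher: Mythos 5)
Your proposal follows essentially the same route as the paper's proof: take the candidate derivative to be the unique solution of the linear variational equation, use the fundamental theorem of calculus together with the local Lipschitz bound from Lemma~\ref{lem:solnlip}, the uniform continuity of $D$ on a compact neighbourhood of the trajectory, and Gronwall's inequality to control the difference quotients, and then a second Gronwall comparison of the variational equations at nearby initial points (plus the time-continuity coming from the integral equation) to obtain continuity of the derivative and hence the $C^{0,1}$ assertion.

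One wrinkle should be fixed before you declare \ref{enum:lemauxdiff:2} proved at the end of your second paragraph. There you conclude from the convergence $U_\eps\to Z$ for each \emph{fixed} direction $h$, together with linearity of $h\mapsto Z$, that $\frac{\partial}{\partial x}y^x(t)$ exists and equals $Z$. Existence of all directional (Gateaux) derivatives depending linearly on the direction does not by itself give the derivative $\frac{\partial}{\partial x}y^x(t)$ that appears in the statement. The paper sidesteps this by proving an estimate of the form $\norm{y^{x+k}(t)-y^x(t)-v^{x,k}(t)}\leq T\,\eps\, L\,\norm{k}\exp(KT)$ \emph{uniformly over all sufficiently small increments} $k$ (not only $k=\eps h$ with $h$ fixed and $\eps\to0$), which is a genuine Fr\'echet-type estimate. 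Your argument closes in the same way at no extra cost, because the Lipschitz bound of Lemma~\ref{lem:solnlip} and the uniform-continuity estimate on $D$ that feed your Gronwall step are uniform over $\norm{h}\leq 1$; so either state your Gronwall estimate uniformly in the increment, or first carry out the continuity of $x\mapsto Z$ from your third paragraph and invoke the classical fact that continuous partial derivatives imply continuous differentiability, and only then assert \ref{enum:lemauxdiff:2}.
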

\begin{proof}[Proof of Lemma~\ref{lem:auxdiff}]
  Throughout this proof 
    let $\norm\cdot\colon\R^d\to[0,\infty)$ be the $d$-dimensional Euclidean norm,
    let $C_r\in[0,\infty)$, $r\in[0,\infty)$,
      be the real numbers which satisfy for all
      $r\in[0,\infty)$
    that
    \beq
      C_r
      =
      \bbbbr{
      \sup_{t\in[0,T]}\,
        \sup_{x\in\{z\in\R^d\colon\norm{z}\leq r\}}
          \norm{f(t,x)}
      },
    \eeq
    let $R_x\in[0,\infty)$, $x\in\R^d$,
      be the real numbers which
      satisfy
      for all 
        $x\in\R^d$
    that
    \beq
      R_x=\sup_{t\in[0,T]}\norm{y^x(t)}
      ,
    \eeq
    and let $K_r\in[0,\infty]$, $r\in[0,\infty)$,
      satisfy for all
        $r\in[0,\infty)$
      that
      \beq
        K_r
        =
        \bbbbr{
        \sup_{x\in\{z\in\R^d\colon\norm{z}\leq r\}}\,
          \sup_{t\in[0,T]}\,
            \sup_{h\in\R^d\setminus\{0\}}
              \bbbp{\frac{\norm{D(t,x)h}}{\norm h}}
        }
        .
      \eeq
  Observe that for all 
    $r\in[0,\infty)$
  it holds that
  \ba
  \label{eq:Kr}
    K_r
    &=
    \bbbbr{
    \sup_{x\in\{z\in\R^d\colon\norm{z}\leq r\}}\,
      \sup_{t\in[0,T]}\,
        \sup_{h\in\R^d\setminus\{0\}}
          \bbbnorm{D(t,x)\bbbp{\frac h{\norm h}}}
    }
    \\&=
    \sup_{\substack{(x,t,w)\in\R^d\times[0,T]\times\R^d,\\\norm{x}\leq r,\,\norm w=1}}\,
      \norm{D(t,x)w}
    .
  \ea
  In addition, observe that
    the fact that $f\in C^{0,1}([0,T]\times\R^d,\R^d)$
  implies that
    $\R^d\times[0,T]\times\R^d\ni(x,t,w)\mapsto D(t,x)w\in\R^d$
      is a continuous function.
  Combining 
    this 
  with
    \eqref{eq:Kr}
  establishes that for all
    $r\in[0,\infty)$
  it holds that
  \beq
  \label{eq:Krfindy}
    K_r<\infty.
  \eeq
  Note that 
    Lemma~\ref{lem:solnlip}
  proves that there exist
    $L_w,r_w\in(0,\infty)$, $w\in\R^d$,
  such that for all
    $v,w\in\R^d$,
    $t\in[0,T]$
      with $\norm{v-w}<r_w$
  it holds that
  \beq
    \label{eq:ycontx2}
    \norm{y^v(t)-y^w(t)}
    \leq
    L_w\norm{v-w}
    .
  \eeq
  In the next step we observe that 
    \eqref{eq:yxdiffeq}
  implies that for all
    $w\in\R^d$,
    $t\in[0,T]$,
    $u\in[0,t]$
  it holds that
  \ba
  \label{eq:yxtcont3}
    \norm{y^w(t)-y^w(u)}
    &=
    \bbbnorm{
      \bbbbr{w+\int_0^t f(s,y^w(s))\,\diff s}
      -
      \bbbbr{w+\int_0^u f(s,y^w(s))\,\diff s}
    }
    \\&=
    \bbbnorm{\int_u^t f(s,y^w(s))\,\diff s}
    \\&\leq
    \int_u^t \norm{f(s,y^w(s))}\,\diff s
    \\&\leq
    \int_u^t C_{R_w}\,\diff s
    \\&=
    (t-u)C_{R_w}
    .
  \ea
    This,
    \eqref{eq:ycontx2},
    and the triangle inequality
    hence 
  prove that for all
    $v,w\in\R^d$,
    $t,u\in[0,T]$
      with $\norm{v-w}<r_w$
  it holds that
  \beq
    \norm{y^v(t)-y^w(u)}
    \leq
    \norm{y^v(t)-y^w(t)}+\norm{y^w(t)-y^w(u)}
    \leq
    L_w\norm{v-w}+C_{R_w}\abs{t-u}
    .
  \eeq
  Therefore, we obtain that for all
    $v,w\in\R^d$,
    $t,u\in[0,T]$,
    $\eps\in(0,\infty)$
      with $\norm{v-w}<\min\{r_w,(2L_w)^{-1}\eps\}$
      and $\abs{t-u}<(2C_{R_w}+1)^{-1}\eps$
    it holds that
  \beq
    \norm{y^v(t)-y^w(u)}
    \leq
    \frac\eps2+\frac\eps2
    =
    \eps
    .
  \eeq
  This establishes that
  \beq
  \label{eq:yxtcont}
    [0,T]\times\R^d\ni (t,x)\mapsto y^x(t)\in\R^d
  \eeq
  is a continuous function.
  Next note that there exist unique
    $v^{x,h}\in C([0,T],\R^d)$, $x,h\in\R^d$,
  such that for all
    $x,h\in\R^d$,
    $t\in[0,T]$
  it holds that
  \beq
  \label{eq:uniquesol}
    v^{x,h}(t)
    =
    h+\int_0^t D(s,y^x(s))(v^{x,h}(s))\,\diff s
    .
  \eeq
    This
  implies that for all
    $x,h,k\in\R^d$,
    $\lambda,\mu\in\R$,
    $t\in[0,T]$
  it holds that
  \ba
  \label{eq:linaux}
    &\lambda v^{x,h}(t)+\mu v^{x,k}(t)
    \\&=
    \lambda \bbbbr{h+\int_0^t D(s,y^x(s))(v^{x,h}(s))\,\diff s}+\mu \bbbbr{k+\int_0^t D(s,y^x(s))(v^{x,k}(s))\,\diff s}
    \\&=
    \lambda h+\mu k+\bbbbr{\int_0^t \lambda D(s,y^x(s))(v^{x,h}(s))+\mu D(s,y^x(s))(v^{x,k}(s))\,\diff s}
    \\&=
    \lambda h+\mu k+\bbbbr{\int_0^t D(s,y^x(s))\bp{\lambda v^{x,h}(s)+\mu v^{x,k}(s)}\,\diff s}
    .
  \ea
  Combining
    this
  with
    \eqref{eq:uniquesol}
  proves that for all
    $x,h,k\in\R^d$,
    $\lambda,\mu\in\R$,
    $t\in[0,T]$
  it holds that
  \beq
    v^{x,\lambda h+\mu k}(t)
    =
    \lambda v^{x,h}(t)+\mu v^{x,k}(t)
    .
  \eeq
  This shows that for all
    $x\in\R^d$,
    $t\in[0,T]$
  it holds that
  \beq
  \label{eq:vlinh}
    \R^d\ni h\mapsto v^{x,h}(t)\in\R^d
  \eeq
  is a linear function.
  Next observe that
    the fact that
      $[0,T]\times\R^d\ni (t,x)\mapsto D(t,x)\in\R^{d\times d}$ 
      is a continuous function
  implies that there exist
    $\delta^{\rho}_{\eps}\in(0,\infty)$, $\rho,\eps\in(0,\infty)$,
  such that for all
    $\rho,\eps\in(0,\infty)$,
    $t\in[0,T]$,
    $\theta\in\{x\in\R^d\colon \norm{x}\leq \rho\}$,
    $\vartheta\in\{x\in\R^d\colon\norm{x-\theta}\leq \delta^\rho_\eps\}$,
    $h\in\R^d$
  it holds that
  \beq
  \label{eq:Dcontx}
    \norm{D(t,\vartheta)h-D(t,\theta)h}
    \leq\eps\norm{h}
    .
  \eeq
  In addition, note that
    \eqref{eq:ycontx2}
  implies that for all
    $\rho,\eps\in(0,\infty)$,
    $z,x\in\R^d$,
    $t\in[0,T]$,
    $u\in[0,1]$
    with $\norm{x-z}< \min\{r_z,(L_z)^{-1}\delta^\rho_{\eps}\}$
  it holds that
  \beq
    \bnorm{\bbr{y^z(t)+u(y^x(t)-y^z(t))}-y^z(t)}
    =
    u\norm{y^x(t)-y^z(t)}
    \leq
    uL_z\norm{x-z}
    \leq
    \delta^\rho_{\eps}
    .
  \eeq
  Combining
    this
  with
    \eqref{eq:Dcontx}
  shows that for all
    $\eps\in(0,\infty)$,
    $z,x,h\in\R^d$,
    $t\in[0,T]$,
    $u\in[0,1]$
    with $\norm{x-z}<\min\{r_z,(L_z)^{-1}\delta^{R_z}_{\eps}\}$
  it holds that
  \ba
    \bnorm{D\bp{t,y^z(t)+u(y^x(t)-y^z(t))}h-D(t,y^z(t))h}
    &\leq
    \eps\norm{h}
    .
  \ea
    The triangle inequality
    therefore
  implies that for all
    $\eps\in(0,\infty)$,
    $z,x,h,\mathfrak h\in\R^d$,
    $t\in[0,T]$,
    $u\in[0,1]$
    with $\norm{x-z}< \min\{r_z,(L_z)^{-1}\delta^{R_z}_{\eps}\}$
  it holds that
  \ba
  \label{eq:Dineq}
    &\bnorm{D\bp{t,y^z(t)+u(y^x(t)-y^z(t))}h-D(t,y^z(t))\mf h}
    \\&\leq
    \bnorm{D\bp{t,y^z(t)+u(y^x(t)-y^z(t))}h-D(t,y^z(t))h}+\norm{D(t,y^z(t))h-D(t,y^z(t))\mf h}
    \\&\leq
    \eps\norm{h}+\norm{D(t,y^z(t))(h-\mf h)}
    \\&\leq
    \eps\norm{h}+K_{R_z}\norm{h-\mf h}
    .
  \ea
    The fundamental theorem of calculus
    and \eqref{eq:ycontx2}
    hence
  prove that for all
    $\eps\in(0,\infty)$,
    $z,k\in\R^d$,
    $t\in[0,T]$
    with $\norm{k}<\min\{r_z,(L_z)^{-1}\delta^{R_z}_{\eps}\}$
  it holds that
  \ba
    &\bnorm{f(t,y^{z+k}(t))-f(t,y^z(t))-D(t,y^z(t))(v^{z,k}(t))}
    \\&=
    \bnorm{
      \bbr{f\bp{t,y^{z}(t)+u(y^{z+k}(t)-y^{z}(t))}}_{u=0}^{u=1}
      -D(t,y^z(t))(v^{z,k}(t))
    }
    \\&=
    \biggl\lVert
      \bbbbr{\int_0^1 D\bp{t,y^{z}(t)+u(y^{z+k}(t)-y^{z}(t))}(y^{z+k}(t)-y^{z}(t))\,\diff u}
      \\&\qquad -D(t,y^z(t))(v^{z,k}(t))
    \biggr\rVert
    \\&=
    \bbbnorm{
      \int_0^1 D\bp{t,y^{z}(t)+u(y^{z+k}(t)-y^{z}(t))}(y^{z+k}(t)-y^{z}(t))-D(t,y^z(t))(v^{z,k}(t))\,\diff u
    }
    \\&\leq
    \int_0^1 \bnorm{D\bp{t,y^{z}(t)+u(y^{z+k}(t)-y^{z}(t))}(y^{z+k}(t)-y^{z}(t))-D(t,y^z(t))(v^{z,k}(t))}\,\diff u
    \\&\leq
    \int_0^1 \eps\norm{y^{z+k}(t)-y^{z}(t)}+K_{R_z}\norm{y^{z+k}(t)-y^{z}(t)-v^{z,k}(t)}\,\diff u
    \\&=
    \eps\norm{y^{z+k}(t)-y^{z}(t)}+K_{R_z}\norm{y^{z+k}(t)-y^{z}(t)-v^{z,k}(t)}
    \\&\leq
    \eps L_z\norm{k}+K_{R_z}\norm{y^{z+k}(t)-y^{z}(t)-v^{z,k}(t)}
    .
  \ea
  Combining
    this
  with
    \eqref{eq:yxdiffeq}
    and \eqref{eq:uniquesol}
  shows that for all
    $\eps\in(0,\infty)$,
    $z,k\in\R^d$,
    $t\in[0,T]$
    with $\norm{k}< \min\{r_z,(L_z)^{-1}\delta^{R_z}_{\eps}\}$
  it holds that
  \ba
    &\norm{y^{z+k}(t)-y^z(t)-v^{z,k}(t)}
    \\&=
    \biggl\lVert
      \bbbbr{z+k+\int_0^t f(s,y^{z+k}(s))\,\diff s}
      -\bbbbr{z+\int_0^t f(s,y^z(s))\,\diff s}
      \\&\qquad
      -\bbbbr{k+\int_0^t D(s,y^z(s))(v^{z,k}(s))\,\diff s}
    \biggr\rVert
    \\&=
    \bbbnorm{
      \int_0^t f(s,y^{z+k}(s))-f(s,y^z(s))-D(s,y^z(s))(v^{z,k}(s))\,\diff s
    }
    \\&\leq
    \int_0^t \bnorm{f(s,y^{z+k}(s))-f(s,y^z(s))-D(s,y^z(s))(v^{z,k}(s))}\,\diff s
    \\&\leq
    \int_0^t \eps L_z\norm{k}+K_{R_z}\norm{y^{z+k}(s)-y^{z}(s)-v^{z,k}(s)}\,\diff s
    \\&=
    t\eps L_z\norm{k}+\int_0^tK_{R_z}\norm{y^{z+k}(s)-y^{z}(s)-v^{z,k}(s)}\,\diff s
    \\&\leq
    T\eps L_z\norm{k}+\int_0^tK_{R_z}\norm{y^{z+k}(s)-y^{z}(s)-v^{z,k}(s)}\,\diff s
    .
  \ea
    Gronwall's integral inequality
      (see, e.g., Grohs et al.~\cite[Lemma~2.11]{GHJW}
        (with
          $\alpha\is T\eps L_z\norm{k}$,
          $\beta\is K_{R_z}$,
          $T\is T$,
          $f\is ([0,T]\ni s\mapsto\norm{y^{z+k}(s)-y^z(s)-v^{z,k}(s)}\in[0,\infty))$
          for
          $\eps\in(0,\infty)$,
          $z\in\R^d$,
          $k\in\{h\in\R^d\colon \norm h<\min\{r_z,(L_z)^{-1}\delta^{R_z}_{\eps}\}\}$
        in the notation of Grohs et al.~\cite[Lemma~2.11]{GHJW})%
      )
    and \eqref{eq:Krfindy}
    therefore
  show that for all
    $\eps\in(0,\infty)$,
    $z,k\in\R^d$,
    $t\in[0,T]$
    with $\norm{k}< \min\{r_z,(L_z)^{-1}\delta^{R_z}_{\eps}\}$
  it holds that
  \ba
  \label{eq:diffgron}
    \norm{y^{z+k}(t)-y^z(t)-v^{z,k}(t)}
    &\leq
    T\eps L_z\norm{k}\exp(K_{R_z}t)
    .
  \ea
  %
    This
  establishes that for all
    $\eps\in(0,\infty)$,
    $z\in\R^d$,
    $k\in\R^d\setminus\{0\}$,
    $t\in[0,T]$
    with $\norm{k}< \min\bigl\{r_z,(L_z)^{-1}\delta^{R_z}_{(TL_z\exp(K_{R_z}t))^{-1}\eps}\bigr\}$
  it holds that
  \beq
    \frac{\norm{y^{z+k}(t)-y^z(t)-v^{z,k}(t)}}{\norm k}
    \leq
    \eps
    .
  \eeq
  Therefore, we obtain that for all
    $z\in\R^d$,
    $t\in[0,T]$
  it holds that
  \ba
    \limsup_{\substack{k\to 0,\\k\in\R^d\setminus\{0\}}}\bbbbr{\frac{\norm{y^{z+k}(t)-y^z(t)-v^{z,k}(t)}}{\norm k}}
    =0
    .
  \ea
  Combining
    this
  with
    \eqref{eq:vlinh}
  shows that for all
    $t\in[0,T]$,
    $x,h\in\R^d$
  it holds that
    $\R^d\ni z\mapsto y^z(t)\in\R^d$
    is a differentiable function
    and
  \beq
  \label{eq:yztdiffble}
    \bp{\tfrac{\partial}{\partial x}y^x(t)}(h)
    =
    v^{x,h}(t)
    .
  \eeq
    This
    and \eqref{eq:uniquesol}
  establish~\ref{enum:lemauxdiff:2}.
  Next note that
    the triangle inequality
    and \eqref{eq:uniquesol}
  imply that for all
    $x,h\in\R^d$,
    $t\in[0,T]$
  it holds that
  \ba
    \norm{v^{x,h}(t)}
    &\leq
    \norm{h}+\bbbnorm{\int_0^t D(s,y^x(s))(v^{x,h}(s))\,\diff s}
    \\&\leq
    \norm{h}+\int_0^t \bnorm{D(s,y^x(s))(v^{x,h}(s))}\,\diff s
    \\&\leq
    \norm{h}+\int_0^t K_{R_x}\norm{v^{x,h}(s)}\,\diff s
    .
  \ea
    Gronwall's integral inequality
      (see, e.g., Grohs et al.~\cite[Lemma~2.11]{GHJW}
        (with
          $\alpha\is \norm{h}$,
          $\beta\is K_{R_x}$,
          $T\is T$,
          $f\is ([0,T]\ni s\mapsto\norm{v^{x,h}(s)}\in[0,\infty])$
          for
          $x,h\in\R^d$
        in the notation of Grohs et al.~\cite[Lemma~2.11]{GHJW})%
      )
    and \eqref{eq:Krfindy}
    hence
  ensure that for all
    $x,h\in\R^d$,
    $t\in[0,T]$
  it holds that
  \beq
    \label{eq:vxhbnd}
    \norm{v^{x,h}(t)}
    \leq
    \norm{h}\exp(K_{R_x}t)
    \leq
    \norm{h}\exp(K_{R_x}T)
    .
  \eeq
  In addition, observe that 
    \eqref{eq:ycontx2}
    and the triangle inequality
  imply that for all
    $x,z\in\R^d$,
    $t\in[0,T]$
    with $\norm{x-z}<\min\{1,r_z\}$
  it holds that
  \beq
    \norm{y^x(t)}
    \leq
    \norm{y^x(t)-y^z(t)}+\norm{y^z(t)}
    \leq
    L_z\norm{x-z}+R_z
    \leq
    L_z+R_z
    .
  \eeq
  This ensures that for all
    $x,z\in\R^d$
    with $\norm{x-z}<\min\{1,r_z\}$
  it holds that
  \beq
    R_x
    \leq
    R_z+L_z.
  \eeq
  Combining
    this
  with
    \eqref{eq:vxhbnd}
  proves that for all
    $x,z,h\in\R^d$,
    $t\in[0,T]$
    with $\norm{x-z}<\min\{1,r_z\}$
  it holds that
  \beq
  \label{eq:vxhbnd2}
    \norm{v^{x,h}(t)}\leq \norm{h}\exp(K_{R_z+L_z}T).
  \eeq
  Next note that
    \eqref{eq:Dineq}
  implies that for all
    $\eps\in(0,\infty)$,
    $x,z,h\in\R^d$,
    $t\in[0,T]$
    with $\norm{x-z}<\min\{1,r_z,(L_z)^{-1}\delta_\eps^{R_z}\}$
  it holds that
  \ba
    \norm{v^{x,h}(t)-v^{z,h}(t)}
    &=
    \biggl\lVert
      \bbbbr{h+\int_0^t D(s,y^x(s))(v^{x,h}(s))\,\diff s}
      \\&\qquad-
      \bbbbr{h+\int_0^t D(s,y^z(s))(v^{z,h}(s))\,\diff s}
    \biggr\rVert
    \\&=
    \bbbnorm{
      \int_0^t D(s,y^x(s))(v^{x,h}(s))-D(s,y^z(s))(v^{z,h}(s))\,\diff s
    }
    \\&\leq
    \int_0^t \bnorm{D(s,y^x(s))(v^{x,h}(s))-D(s,y^z(s))(v^{z,h}(s))}\,\diff s
    \\&\leq
    \int_0^t \eps\norm{v^{x,h}(s)}+K_{R_z}\norm{v^{x,h}(s)-v^{z,h}(s)}\,\diff s
    \\&\leq
    \int_0^t \eps\norm{h}\exp(K_{R_z+L_z}T)+K_{R_z}\norm{v^{x,h}(s)-v^{z,h}(s)}\,\diff s
    \\&=
    \eps\norm{h}\exp(K_{R_z+L_z}T)+\int_0^t K_{R_z}\norm{v^{x,h}(s)-v^{z,h}(s)}\,\diff s
    .
  \ea
    This,
    Gronwall's integral inequality
      (see, e.g., Grohs et al.~\cite[Lemma~2.11]{GHJW}
        (with
          $\alpha\is \eps\norm{h}\exp(K_{R_z+L_z}T)$,
          $\beta\is K_{R_z}$,
          $T\is T$,
          $f\is ([0,T]\ni t\mapsto\norm{v^{x,h}(t)-v^{z,h}(t)}\in[0,\infty))$
        for
        $\eps\in(0,\infty)$,
        $z,h\in\R^d$,
        $x\in\{w\in\R^d\colon \norm{w-z}<\min\{1,r_z,(L_z)^{-1}\delta_\eps^{R_z}\}\}$
        in the notation of Grohs et al.~\cite[Lemma~2.11]{GHJW})%
      ),
    and \eqref{eq:Krfindy}
  show that for all
    $\eps\in(0,\infty)$,
    $z,x,h\in\R^d$,
    $t\in[0,T]$
    with $\norm{x-z}<\min\{1,r_z,(L_z)^{-1}\delta_\eps^{R_z}\}$
  it holds that
  \ba
  \label{eq:vxhcont1}
    \norm{v^{x,h}(t)-v^{z,h}(t)}
    &\leq
    \eps\norm{h}\exp(K_{R_z+L_z}T)\exp(K_{R_z}t)
    \\&\leq
    \eps\norm{h}\exp(K_{R_z+L_z}T)\exp(K_{R_z}T)
    \\&\leq
    \eps\norm{h}\exp(2K_{R_z+L_z}T)
    .
  \ea
  Moreover,
    \eqref{eq:uniquesol}
    and \eqref{eq:vxhbnd}
  show that for all
    $z,h\in\R^d$,
    $s\in[0,T]$,
    $t\in[0,s]$
  it holds that
  \ba
  \label{eq:vzhtcont}
     \norm{v^{z,h}(s)-v^{z,h}(t)}
     &=
     \biggl\lVert
       \bbbbr{h+\int_0^s D(u,y^{z}(u))(v^{z,h}(u))\,\diff u}
       \\&\qquad-
       \bbbbr{h+\int_0^t D(u,y^{z}(u))(v^{z,h}(u))\,\diff u}
     \biggr\rVert
     \\&=
     \bbbnorm{
       \int_t^s D(u,y^{z}(u))(v^{z,h}(u))\,\diff u
     }
     \\&\leq
     \int_t^s \bnorm{D(u,y^{z}(u))(v^{z,h}(u))}\,\diff u
     \\&\leq
     \int_t^s K_{R_z}\norm{v^{z,h}(u)}\,\diff u
     \\&\leq
     \int_t^s K_{R_z}\norm h\exp(K_{R_z}T)\,\diff u
     \\&=
     (s-t)K_{R_z}\norm h\exp(K_{R_z}T)
     .
  \ea
  Combining
    this
  with
    \eqref{eq:vxhcont1}
  proves that for all
    $\eps\in(0,\infty)$,
    $z,x,h\in\R^d$,
    $s,t\in[0,T]$
    with $\norm{x-z}<\min\bigl\{1,r_z,(L_z)^{-1}\delta_{\exp(-2K_{R_z+L_z}T)2^{-1}\eps}^{R_z}\bigr\}$
      and $\abs{s-t}<(2K_{R_z}\exp(K_{R_z}T)+1)^{-1}\eps$
  it holds that
  \ba
    \norm{v^{x,h}(s)-v^{z,h}(t)}
    &\leq
    \norm{v^{x,h}(s)-v^{z,h}(s)}+\norm{v^{z,h}(s)-v^{z,h}(t)}
    \\&\leq
    \frac{\eps\norm{h}}2+\frac{\eps\norm h}{2}
    \\&=
    \eps\norm h
    .
  \ea
  Combining
    this 
  with
    \eqref{eq:yztdiffble}
  establishes that
  \beq
    [0,T]\times\R^d\ni (t,x)\mapsto \bp{\tfrac{\partial}{\partial x}y^{x}(t)}\in\R^{d\times d}
  \eeq
    is a continuous function.
    This
    and \eqref{eq:yxtcont}
  prove \ref{enum:lemauxdiff:1}.
  The proof of Lemma~\ref{lem:auxdiff} is thus completed.
\end{proof}

\begin{lemma}
  \label{lem:diffy}
  Let $d,m\in\N$, 
  $T\in[0,\infty)$,
  $\mu\in C^1(\R^d,\R^d)$,
  $\sigma\in\R^{d\times m}$,
  $w\in C([0,T],\R^m)$
  and let $y^x\in C([0,T],\R^d)$, $x\in\R^d$, 
  satisfy for all $x\in\R^d$, $t\in[0,T]$ that
  \beq
  \label{eq:assy}
  y^x(t)=x+\int_0^t \mu(y^x(s))\,\diff s+\sigma w(t).
  \eeq
  Then 
  \begin{enumerate}[label=(\roman{enumi}),ref=(\roman{enumi})]
  \item[(i)] \label{enum:lemdiffy:1}
    it holds that
      $([0,T]\times \R^d\ni (t,x)\mapsto y^x(t)\in\R^d)\in C^{0,1}([0,T]\times \R^d,\R^d)$
    and 
  \item[(ii)] \label{enum:lemdiffy:2}
    it holds for all 
      $x,h\in\R^d$, 
      $t\in[0,T]$ 
    that
    \beq
    \label{eq:diffyconc}
      \bp{ \tfrac{\partial}{\partial x} y^x(t) }(h)=h+\int_0^t \mu'(y^x(s))\bp{\bp{\tfrac{\partial}{\partial x} y^x(s) }(h)}\,\diff s.
    \eeq
  \end{enumerate}
\end{lemma}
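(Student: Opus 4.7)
The plan is to reduce Lemma~\ref{lem:diffy} to Lemma~\ref{lem:auxdiff} via a change of variables that absorbs the additive Brownian-type perturbation $\sigma w(t)$ into the drift. Specifically, I would introduce the shifted processes $z^x\in C([0,T],\R^d)$, $x\in\R^d$, defined by $z^x(t)=y^x(t)-\sigma w(t)$, together with the time-dependent drift $f\colon[0,T]\times\R^d\to\R^d$ given by $f(t,z)=\mu(z+\sigma w(t))$.

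The first step is to verify the integral equation for $z^x$. Substituting $y^x(s)=z^x(s)+\sigma w(s)$ into~\eqref{eq:assy} and rearranging yields that for all $x\in\R^d$, $t\in[0,T]$ it holds that $z^x(t)=x+\int_0^t\mu(z^x(s)+\sigma w(s))\,\diff s=x+\int_0^t f(s,z^x(s))\,\diff s$. The second step is to check that $f\in C^{0,1}([0,T]\times\R^d,\R^d)$; continuity of $f$ is immediate from $\mu\in C^1(\R^d,\R^d)$ and $w\in C([0,T],\R^m)$, and the chain rule shows that the partial derivative $\tfrac{\partial}{\partial z}f(t,z)=\mu'(z+\sigma w(t))$ exists and depends continuously on $(t,z)$.

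The third step is to invoke Lemma~\ref{lem:auxdiff} with this choice of $f$ and $(y^x)_{x\in\R^d}\leftarrow(z^x)_{x\in\R^d}$. This provides (a) that $([0,T]\times\R^d\ni(t,x)\mapsto z^x(t)\in\R^d)\in C^{0,1}([0,T]\times\R^d,\R^d)$ and (b) that for all $x,h\in\R^d$, $t\in[0,T]$ it holds that $\bp{\tfrac{\partial}{\partial x}z^x(t)}(h)=h+\int_0^t\bp{\tfrac{\partial}{\partial z}f(s,z^x(s))}\bp{\bp{\tfrac{\partial}{\partial x}z^x(s)}(h)}\,\diff s=h+\int_0^t\mu'(z^x(s)+\sigma w(s))\bp{\bp{\tfrac{\partial}{\partial x}z^x(s)}(h)}\,\diff s$.

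The final step is to transfer these conclusions back to $y^x$ via $y^x(t)=z^x(t)+\sigma w(t)$. Because $\sigma w(t)$ is independent of $x$, the identity $\tfrac{\partial}{\partial x}y^x(t)=\tfrac{\partial}{\partial x}z^x(t)$ holds, so the $C^{0,1}$-regularity claimed in item~(i) follows from combining the $C^{0,1}$-regularity of $(t,x)\mapsto z^x(t)$ with the continuity of $t\mapsto\sigma w(t)$, and substituting $z^x(s)+\sigma w(s)=y^x(s)$ into the variational equation for $z^x$ yields exactly~\eqref{eq:diffyconc}. The main obstacle is essentially cosmetic: one must carefully verify that the modified drift $f$ satisfies the regularity hypotheses of Lemma~\ref{lem:auxdiff}, but given $\mu\in C^1$ and $w$ continuous, this verification is routine.
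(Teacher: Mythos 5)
Your proposal is correct and follows essentially the same route as the paper's proof: the paper likewise sets $z^x(t)=y^x(t)-\sigma w(t)$ and $f(t,x)=\mu(x+\sigma w(t))$, verifies $f\in C^{0,1}([0,T]\times\R^d,\R^d)$ and the integral equation for $z^x$, applies Lemma~\ref{lem:auxdiff} with $D(t,x)=\mu'(x+\sigma w(t))$, and transfers the conclusions back via $\tfrac{\partial}{\partial x}y^x(t)=\tfrac{\partial}{\partial x}z^x(t)$. No gaps.
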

\begin{proof}[Proof of Lemma~\ref{lem:diffy}]
  Throughout this proof 
  let $f\colon[0,T]\times\R^d\to \R^d$ 
    \intrtype{be the function which satisfies }%
    \intrtypen{satisfy }%
    for all
      $t\in[0,T]$,
      $x\in\R^d$
    that
    \beq 
      \label{eq:deff}
      f(t,x)=\mu(x+\sigma w(t))
    \eeq
  and let $z^x\colon[0,T]\to\R^d$, $x\in\R^d$, 
    \intrtype{be the functions which satisfy }%
    \intrtypen{satisfy }%
    for all
      $x\in\R^d$, 
      $t\in [0,T]$ 
    that
    \beq 
      \label{eq:defzxb}
      z^x(t)=y^x(t)-\sigma w(t).
    \eeq
  Observe that 
    the hypothesis that $\mu\in C^1(\R^d,\R^d)$
    and the hypothesis that $w$ is a continuous function 
  show that for all 
    $t\in[0,T]$, 
    $x\in\R^d$ 
  it holds that
  \beq
  \begin{gathered}
    f\in C([0,T]\times\R^d,\R^d),\qquad
    (\R^d\ni v\mapsto f(t,v)\in\R^d)\in C^1(\R^d,\R^d),\\
    \text{and}\qquad \tfrac\partial{\partial x} f(t,x)=\mu'(x+\sigma w(t)).
    \label{eq:fC1}
  \end{gathered}
  \eeq
    The hypothesis that $\mu\in C^1(\R^d,\R^d)$
    and the hypothesis that $w$ is a continuous function 
    hence
  imply that
    $[0,T]\times\R^d\ni(t,x)\mapsto \tfrac\partial{\partial x}f(t,x)\in\R^{d\times d}$ is a continuous function.
  This and \eqref{eq:fC1} ensure that 
  \beq
  \label{eq:fC01}
    f\in C^{0,1}([0,T]\times \R^d,\R^d).
  \eeq
  Next we combine
    \eqref{eq:assy}
    and \eqref{eq:defzxb}
  to obtain that for all 
    $x\in\R^d$, 
    $t\in[0,T]$ 
  it holds that
  \beq
    \label{eq:zxinteq}
    z^x(t)=x+\int_0^t \mu(y^x(s))\,\diff s
    =x+\int_0^t \mu(z^x(s)+\sigma w(s))\,\diff s
    =x+\int_0^t f(s,z^x(s))\,\diff s.
  \eeq
  In addition, note that 
    the assumption that 
      for all 
        $x\in\R^d$ 
      it holds that 
        $y^x$ is a continuous function
    and the assumption that $w$ is a continuous function 
  imply that for all 
    $x\in\R^d$ 
  it holds that
  $z^x$ is a continuous function.
  Combining 
    this,
    \eqref{eq:fC1},
    \eqref{eq:fC01},
    and \eqref{eq:zxinteq}
  with 
    Lemma~\ref{lem:auxdiff}
    (with
      $d\is d$,
      $T\is T$,
      $f\is f$,
      $D\is \bp{[0,T]\times \R^d\ni(t,x)\mapsto \mu'(x+\sigma w(t))\in\R^{d\times d}}$,
      $(y^x)_{x\in\R^d}\is (z^x)_{x\in\R^d}$
    in the notation of Lemma~\ref{lem:auxdiff})
  shows 
  \begin{enumerate}[label=(\alph{enumi})]
  \item that 
  \beq
  \label{eq:zxC01}
    ([0,T]\times\R^d\ni(t,x)\mapsto z^x(t)\in\R^d)\in C^{0,1}([0,T]\times\R^d,\R^d)
  \eeq
  and 
  \item that for all 
    $x,h\in\R^d$, 
    $t\in[0,T]$ 
  it holds that
  \beq
  \label{eq:zxint3}
    \bp{\tfrac\partial{\partial x}z^x(t)}(h)
    =h+\int_0^t\mu'(z^x(s)+\sigma w(s))\bp{\bp{\tfrac{\partial}{\partial x} z^x(s)}(h)}\,\diff s
    .
  \eeq
  \end{enumerate}
  Observe that
    \eqref{eq:defzxb} 
    and \eqref{eq:zxC01}
  imply that
    $([0,T]\times\R^d\ni (t,x)\mapsto y^x(t)\in\R^d)\in C^{0,1}([0,T]\times\R^d,\R^d)$.
    This
    and \eqref{eq:defzxb}
  establish that for all
    $x\in\R^d$,
    $t\in[0,T]$
  it holds that
  \beq
    \tfrac\partial{\partial x}y^x(t)
    =
    \tfrac\partial{\partial x}z^x(t)
    .
  \eeq
  Combining
    this
  with
    \eqref{eq:zxint3}
  proves that for all 
    $x,h\in\R^d$, 
    $t\in[0,T]$ 
  it holds that
  \beq
    \bp{ \tfrac\partial{\partial x}y^x(t) }(h)
    =
    h+\int_0^t\mu'(y^x(s))\bp{\bp{\tfrac{\partial}{\partial x} y^x(s)}(h)}\,\diff s
    .
  \eeq
  The proof of Lemma \ref{lem:diffy} is thus completed.
\end{proof}

\subsection{Differentiability with respect to the initial value for SDEs}
\label{subsec:diffsdes}

\begin{lemma}
  \label{lem:existX}
  Let $d,m\in\N$,
  $T\in[0,\infty)$,
  $\mu\in C^1(\R^d,\R^d)$,
  $\sigma\in\R^{d\times m}$,
  $\varphi\in C(\R^m,[0,\infty))$,
  $V\in C^1(\R^d,[0,\infty))$,
  let $\norm{\cdot}\colon\R^d\to[0,\infty)$ be a norm,
  assume 
    for all 
      $x\in\R^d$, 
      $z\in\R^m$ 
    that
      $V'(x)\mu(x+\sigma z)\leq \varphi(z)V(x)$ and $\norm x\leq V(x)$,
  let $(\Omega,\mc F,\PP)$ be a probability space,
  and let $W\colon[0,T]\times\Omega\to\R^m$ be a stochastic process
    with continuous sample paths.
  Then
  \begin{enumerate}[label=(\roman{enumi}),ref=(\roman{enumi})]
  \item \label{enum:lemexistX:1}
    there exist unique stochastic processes
      $X^x\colon [0,T]\times\Omega\to\R^d$, $x\in\R^d$, with continuous sample paths
    which satisfy for all 
      $x\in\R^d$,
      $t\in[0,T]$,
      $\omega\in\Omega$
    that
    \beq
      X^x(t,\omega)=x+\int_0^t\mu(X^x(s,\omega))\,\diff s+\sigma W(t,\omega),
    \eeq
  \item \label{enum:lemexistX:2a}
    it holds for all $\omega\in\Omega$ that
      $([0,T]\times\R^d\ni (t,x)\mapsto X^x(t,\omega)\in\R^d)\in C^{0,1}([0,T]\times\R^d,\R^d)$,
    and
  \item \label{enum:lemexistX:2b}
    it holds 
      for all 
        $x,h\in\R^d$,
        $t\in[0,T]$,
        $\omega\in\Omega$ 
      that
        \beq
          \bp{ \tfrac{\partial}{\partial x} X^x(t,\omega) }(h)=h+\int_0^t \mu'(X^x(s,\omega))\bp{\bp{\tfrac{\partial}{\partial x} X^x(s,\omega) }(h)}\,\diff s
          .
        \eeq
  \end{enumerate}
\end{lemma}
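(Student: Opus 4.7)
The strategy is to assemble the three conclusions from the deterministic results already at hand: Lemma~\ref{lem:inteq} pathwise for existence and uniqueness of continuous paths, Lemma~\ref{lem:intmeas} to promote the pathwise construction to a stochastic process, and Lemma~\ref{lem:diffy} pathwise for the $C^{0,1}$ regularity and the variational equation. No new estimates are needed; everything is a direct application of the earlier lemmas, with the Lyapunov-type hypothesis $V'(x)\mu(x+\sigma z)\leq \varphi(z)V(x)$ being precisely the mechanism that guarantees non-explosion for each realization of the driving noise.

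For~\ref{enum:lemexistX:1}, I would first fix $\omega\in\Omega$ and note that $W(\cdot,\omega)\in C([0,T],\R^m)$ by the continuous sample path hypothesis and that $\mu\in C^1(\R^d,\R^d)$ is locally Lipschitz continuous (via the fundamental theorem of calculus and continuity of $\mu'$). The hypotheses $V'(x)\mu(x+\sigma z)\leq \varphi(z)V(x)$ and $\norm{x}\leq V(x)$ then let me invoke Lemma~\ref{lem:inteq} (with $\xi\is x$, $w\is W(\cdot,\omega)$, $\mu\is\mu$, $\sigma\is\sigma$, $\varphi\is\varphi$, $V\is V$) to produce, for each $x\in\R^d$, a unique $X^x(\cdot,\omega)\in C([0,T],\R^d)$ satisfying the integral equation pointwise in $t$. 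This gives pathwise existence, pathwise uniqueness, and continuous sample paths simultaneously for all $x\in\R^d$. To upgrade the resulting family to the stochastic process property, I would fix $x\in\R^d$ and apply Lemma~\ref{lem:intmeas} with $f\is([0,T]\times\R^d\ni(t,z)\mapsto\mu(z)\in\R^d)$, $\xi\is x$, and $W$ replaced by the $\R^d$-valued process $\sigma W$ (which again has continuous sample paths as $\sigma$ is a constant matrix). The local Lipschitz hypothesis~\eqref{eq:flip2} collapses in this time-independent setting to finiteness of $\sup_{x,y\in\{z\colon\norm{z}\leq r\},\,x\neq y}\norm{\mu(x)-\mu(y)}/\norm{x-y}$, which once more follows from continuity of $\mu'$ on compact sets. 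Lemma~\ref{lem:intmeas} then yields measurability of $\Omega\ni\omega\mapsto X^x(t,\omega)\in\R^d$ for every $t\in[0,T]$, completing~\ref{enum:lemexistX:1}.

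For~\ref{enum:lemexistX:2a} and~\ref{enum:lemexistX:2b}, I would again argue pathwise. For each fixed $\omega\in\Omega$, the family $(X^x(\cdot,\omega))_{x\in\R^d}$ satisfies the assumptions of Lemma~\ref{lem:diffy} with $w\is W(\cdot,\omega)$, $\mu\is\mu$, $\sigma\is\sigma$, $(y^x)_{x\in\R^d}\is(X^x(\cdot,\omega))_{x\in\R^d}$. Hence Lemma~\ref{lem:diffy} simultaneously delivers the joint $C^{0,1}$ regularity $([0,T]\times\R^d\ni(t,x)\mapsto X^x(t,\omega)\in\R^d)\in C^{0,1}([0,T]\times\R^d,\R^d)$ claimed in~\ref{enum:lemexistX:2a} and the variational integral equation for $\bp{\tfrac{\partial}{\partial x}X^x(t,\omega)}(h)$ claimed in~\ref{enum:lemexistX:2b}, both uniformly in the single parameter $\omega$ that we had fixed.

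I do not expect any step to present a genuine obstacle: the entire proof is bookkeeping ensuring that the pathwise construction from Lemma~\ref{lem:inteq} is the same family whose measurability follows from Lemma~\ref{lem:intmeas}, which is guaranteed automatically by the pathwise uniqueness built into Lemma~\ref{lem:inteq}. The only point demanding mild care is the translation between the $\R^m$-valued driving noise $W$ with matrix coefficient $\sigma$ and the $\R^d$-valued noise $\sigma W$ that Lemma~\ref{lem:intmeas} expects; this is a purely notational adjustment.
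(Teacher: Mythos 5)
Your proposal is correct and follows essentially the same route as the paper: Lemma~\ref{lem:inteq} applied pathwise for existence and uniqueness of continuous solutions, Lemma~\ref{lem:intmeas} (with the drift viewed as a time-independent $f$ and the noise as the $\R^d$-valued process $\sigma W$) for the stochastic-process property, and Lemma~\ref{lem:diffy} applied pathwise for the $C^{0,1}$ regularity and the variational equation. The details you flag, including deducing the local Lipschitz bound from $\mu\in C^1$ and the notational switch to $\sigma W$, are exactly the points handled in the paper's proof.
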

\begin{proof}[Proof of Lemma~\ref{lem:existX}]
  First, observe that 
    Lemma~\ref{lem:inteq} 
    (with
      $d\is d$,
      $m\is m$,
      $T\is T$,
      $\xi\is x$,
      $\mu\is\mu$,
      $\sigma\is\sigma$,
      $\varphi\is\varphi$,
      $V\is V$,
      $w\is([0,T]\ni t\mapsto W(t,\omega)\in\R^m)$,
      $\norm\cdot\is\norm\cdot$
      for $x\in\R^d$, $\omega\in\Omega$
    in the notation of Lemma~\ref{lem:inteq})
  proves that 
    there exist
      unique $y^x_\omega\in C([0,T],\R^d)$, $x\in\R^d$, $\omega\in\Omega$,
    such that for all
      $x\in\R^d$,
      $\omega\in\Omega$,
      $t\in[0,T]$
    it holds that
    \beq
      \label{eq:yxom}
      y^x_\omega(t)=x+\int_0^t\mu(y^x_\omega(s))\,\diff s+\sigma W(t,\omega).
    \eeq
  In addition, note that 
    the hypothesis that $\mu\in C^1(\R^d,\R^d)$
  ensures that 
  for all
    $r\in(0,\infty)$
  it holds that
  \beq
    \sup_{\substack{x,y\in\R^d,\,x\neq y,\\\norm x+\norm y\leq r}}\frac{\norm{\mu(x)-\mu(y)}}{\norm{x-y}}<\infty.
  \eeq
  Combining 
    this
    and \eqref{eq:yxom}
  with
    Lemma~\ref{lem:intmeas}
    (with
      $d\is d$,
      $T\is T$,
      $f\is \pp{[0,T]\times\R^d\ni(t,y)\mapsto\mu(y)\in\R^d}$,
      $\xi\is x$,
      $(\Omega,\mc F,\PP)\is(\Omega,\mc F,\PP)$,
      $W\is \pp{[0,T]\times\Omega\ni(t,\omega)\mapsto \sigma W(t,\omega)\in\R^d}$,
      $\norm\cdot\is\norm\cdot$,
      $Y\is\pp{[0,T]\times\Omega\ni (t,\omega)\mapsto y^x_\omega(t)\in\R^d}$
      for
      $x\in\R^d$
    in the notation of Lemma~\ref{lem:intmeas})
  shows that for all 
    $x\in\R^d$
  it holds that
    $[0,T]\times \Omega\ni(t,\omega)\mapsto y^x_\omega(t)\in\R^d$ 
    is a stochastic process.
    This 
    and \eqref{eq:yxom} 
  establish~\ref{enum:lemexistX:1}.
  Next note that
    \eqref{eq:yxom}
    and Lemma~\ref{lem:diffy}
    (with
      $d\is d$,
      $m\is m$,
      $T\is T$,
      $\mu\is\mu$,
      $\sigma\is\sigma$,
      $w\is([0,T]\ni t\mapsto W(t,\omega)\in\R^m)$,
      $(y^x)_{x\in\R^d}=(y^x_\omega)_{x\in\R^d}$
      for $\omega\in\Omega$
    in the notation of Lemma~\ref{lem:diffy})
  establishes~\ref{enum:lemexistX:2a}
  and~\ref{enum:lemexistX:2b}.
  The proof of Lemma~\ref{lem:existX} is thus completed.
\end{proof}

\section{Integrability properties for stochastic differential equations (SDEs)}\label{section6}

In this section we present in Lemma~\ref{lem:expbrownaux} in
Subsection~\ref{subsec:intbrown} below, in
Lemmas
\ref{lem:expbrown}
--\ref{lem:expWr} in Subsection~\ref{subsec:intndimbrown} below,
and in Lemma~\ref{lem:regcrit} in Subsection~\ref{subsec:intsdes} below
a few elementary 
integrability properties for standard Brownian motions 
(see Lemmas~\ref{lem:expbrownaux} and Lemmas \ref{lem:expbrown}
--\ref{lem:expWr}) and solutions of 
certain additive noise driven stochastic differential equations 
(see Lemma~\ref{lem:regcrit}). Lemma~\ref{lem:expbrownaux} establishes 
exponential integrability properties for 
one-dimensional standard Brownian motions and is a straightforward 
consequence of Ledoux-Talagrand~\cite[Corollary 3.2]{LedouxTalagrand}. 
Lemmas~\ref{lem:expbrown} and \ref{lem:regcrithelp}
establish exponential integrability properties for multi-dimensional 
standard Brownian motions. 
Our proof of Lemma~\ref{lem:expbrown} uses Lemma~\ref{lem:expbrownaux} 
and an application of the 
well-known inequality for real numbers in Lemma~\ref{lem:powsum} below. 
Lemma~\ref{lem:regcrithelp}, in turn, is an immediate consequence of 
Lemma~\ref{lem:expbrown}. 
Lemma~\ref{lem:expWr} establishes polynomial integrability properties for multi-dimensional 
standard Brownian motions and is a direct consequence of
Lemma~\ref{lem:regcrithelp}. Lemmas~\ref{lem:expbrownaux}
--\ref{lem:expWr}
are essentially well-known and for the reader's convenience, 
we include in this section full proofs for these lemmas.

\subsection{Integrability properties for scalar Brownian motions}
\label{subsec:intbrown}

\begin{lemma}
  \label{lem:expbrownaux}
  Let $T,c\in[0,\infty)$,
  $\alpha\in[0,2)$,
  let $(\Omega,\mc F,\PP)$ be a probability space,
  and let $W\colon[0,T]\times\Omega\to\R$ be a Brownian motion
    with continuous sample paths.
  Then 
  \begin{enumerate}[label=(\roman{enumi}),ref=(\roman{enumi})]
  \item 
    it holds that $\Omega\ni\omega\mapsto \sup_{t\in[0,T]}(\abs{W(t,\omega)}^\alpha)\in\R$ 
    is an $\mc F$/$\mc B(\R)$-measurable function and
  \item it holds that
  $
    \bEE{\exp\bp{c\bbr{\sup\nolimits_{t\in[0,T]}(\abs{W(t)}^\alpha)}}}
    <\infty
  $.
  \end{enumerate}
\end{lemma}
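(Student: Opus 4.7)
\textbf{Proof plan for Lemma~\ref{lem:expbrownaux}.}
My plan is to dispatch~(i) by invoking the measurability machinery already developed in Section~\ref{section3}, and~(ii) by reducing to a Gaussian supremum bound via the subquadratic growth of $x\mapsto cx^{\alpha}$. For (i), I would consider the process $Y\colon[0,T]\times\Omega\to\R$ defined by $Y(t,\omega)=\abs{W(t,\omega)}^\alpha$. Since $W$ is a stochastic process and $x\mapsto\abs{x}^\alpha$ is continuous (hence Borel measurable), $Y$ is a stochastic process; and since $W$ has continuous sample paths, so does $Y$. Lemma~\ref{lem:supmeas1} applied to $Y$ then immediately yields both that $\sup_{t\in[0,T]}Y(t,\cdot)$ is $\mc F$/$\mc B(\R)$-measurable and (via continuity of sample paths on the compact set $[0,T]$) real-valued.

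For (ii), I would first dispose of the trivial case $T=0$, in which $W(0)=0$ and the expectation equals $1$. For $T>0$, the key observation is that since $\alpha<2$, for every $\varepsilon\in(0,\infty)$ there exists $K_\varepsilon\in[0,\infty)$ such that for all $x\in[0,\infty)$ it holds that $cx^\alpha\leq \varepsilon x^2+K_\varepsilon$ (because $x^{\alpha-2}\to 0$ as $x\to\infty$). Using that $\alpha\geq 0$ allows us to write $\sup_{t\in[0,T]}\abs{W(t)}^\alpha=\bbr{\sup_{t\in[0,T]}\abs{W(t)}}^{\alpha}$, so we obtain the pointwise estimate
\begin{equation}
  \exp\bbp{c\bbr{\sup_{t\in[0,T]}\abs{W(t)}^\alpha}}
  \leq
  e^{K_\varepsilon}\exp\bbp{\varepsilon\bbr{\sup_{t\in[0,T]}\abs{W(t)}}^{2}}.
\end{equation}
I would then fix $\varepsilon\in(0,1/(2T))$ and invoke Ledoux--Talagrand~\cite[Corollary~3.2]{LedouxTalagrand} for the centered Gaussian process $(W(t))_{t\in[0,T]}$, whose weak variance satisfies $\sup_{t\in[0,T]}\EE[W(t)^2]=T$, to conclude that $\EE\bbr{\exp\bp{\varepsilon[\sup_{t\in[0,T]}\abs{W(t)}]^2}}<\infty$; combining this with the displayed estimate and taking expectations completes the proof.

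The only non-routine step is invoking Ledoux--Talagrand correctly; the constant $1/(2T)$ must be matched to the weak variance of the process, and one needs to verify that Corollary~3.2 in that reference applies to the \emph{absolute} supremum (not just the supremum) of the continuous Gaussian process on $[0,T]$. Everything else---the measurability argument via Lemma~\ref{lem:supmeas1} and the elementary inequality $cx^\alpha\leq\varepsilon x^2+K_\varepsilon$---is entirely standard and follows from basic real analysis.
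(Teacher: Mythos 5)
Your proposal is correct and follows essentially the same route as the paper: measurability via Lemma~\ref{lem:supmeas1}, the bound $c\,x^{\alpha}\leq\varepsilon x^{2}+K_{\varepsilon}$ with $\varepsilon<\tfrac{1}{2T}$ (the paper obtains this via Young's inequality with explicit constants, which is only a cosmetic difference), and then Ledoux--Talagrand~\cite[Corollary~3.2]{LedouxTalagrand} for the exponential square-integrability of $\sup_{t\in[0,T]}\abs{W(t)}$.
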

\begin{proof}[Proof of Lemma~\ref{lem:expbrownaux}]
  Throughout this proof
    assume w.l.o.g.\ that $T>0$ and $\alpha>0$
    and let $K\in[0,\infty)$ satisfy
    \beq
      K=\bbp{\frac{2-\alpha}2}\bbp{\frac2{4\alpha T}}^{\frac\alpha{\alpha-2}}c^{\frac2{2-\alpha}}
      .
    \eeq
  Note that
    the fact that $\frac{2-\alpha}2+\frac\alpha2=1$
    and the fact that 
      for all
        $a,b\in[0,\infty)$,
        $p,q\in(0,\infty)$ with $\frac1p+\frac1q=1$
      it holds that $ab\leq \frac{a^p}p+\frac{b^q}q$
      (Young inequality)
  implies that for all 
    $\omega\in\Omega$ 
  it holds that
  \ba
  \label{eq:young}
    &c\bbbbr{\sup_{t\in[0,T]}\bp{\abs{W(t,\omega)}^\alpha}}
    \\&=
    c\bbp{\frac{2}{4\alpha T}}^{-\frac\alpha2}\bbbp{\bbp{\frac{2}{4\alpha T}}^{\frac\alpha2}\bbbbr{\sup_{t\in[0,T]}\bp{\abs{W(t,\omega)}^\alpha}}}
    \\&\leq 
    \frac{2-\alpha}2\bbbbr{c\bbp{\frac{2}{4\alpha T}}^{-\frac\alpha2}}^{\frac2{2-\alpha}}
      +\frac{\alpha}{2}\bbbp{\bbp{\frac{2}{4\alpha T}}^{\frac\alpha2}\bbbbr{\sup_{t\in[0,T]}\bp{\abs{W(t,\omega)}^\alpha}}}^{\frac2\alpha}\\
    &=
    \frac{2-\alpha}2\bbp{\frac2{4\alpha T}}^{\frac\alpha{\alpha-2}}c^{\frac2{2-\alpha}}
      +\frac\alpha2\bbp{\frac{2}{4\alpha T}}\bbbbr{\sup_{t\in[0,T]}\bp{\abs{W(t,\omega)}^2}}\\
    &=
    K+(4T)^{-1}\bbbbr{\sup_{t\in[0,T]}\bp{\abs{W(t,\omega)}^2}}
    .
  \ea
  Furthermore, observe that
    Lemma~\ref{lem:supmeas1}
  ensures that for all
    $\beta\in[0,\infty)$
  it holds that
    \beq
    \label{eq:Wpowmeas}
    \Omega\ni\omega\mapsto \sup\nolimits_{t\in[0,T]}(\abs{W(t,\omega)}^\beta)\in\R
    \eeq
    is an $\mc F$/$\mc B(\R)$-measurable function.
  In addition, note that for all 
    $\kappa\in[0,\frac{1}{2T})$ 
  it holds that
  \beq
    \EE\bbr{\exp\bp{\kappa\bbr{ \sup\nolimits_{t\in[0,T]}\bp{\abs{W(t)}^2}}}}<\infty
  \eeq
  (cf., e.g., Ledoux-Talagrand~\cite[Corollary 3.2]{LedouxTalagrand}).
  Combining
    this
    and \eqref{eq:Wpowmeas}
  with
    \eqref{eq:young}
  establishes that
  \ba
    \bEE{\exp\bp{c\bbr{\sup\nolimits_{t\in[0,T]}\bp{\abs{W(t)}^\alpha}}}}
    &\leq
    \bEE{\exp\bp{K+(4T)^{-1}\sup\nolimits_{t\in[0,T]}\bp{\abs{W(t)}^2}}}
    \\&=
    \exp(K)\,\bEE{\exp\bp{(4T)^{-1}\sup\nolimits_{t\in[0,T]}\bp{\abs{W(t)}^2}}}
    <\infty
    .
  \ea
  This completes the proof of Lemma~\ref{lem:expbrownaux}.  
\end{proof}

\subsection{Integrability properties for multi-dimensional Brownian motions}
\label{subsec:intndimbrown}

\begin{lemma}
\label{lem:powsum}
  It holds for all 
  $\beta\in[0,\infty)$,
  $m\in\N$,
  $a_1,a_2,\dots,a_m\in\R$
  that
  \beq
    \label{eq:powsum}
    \bbabs{\sum\nolimits_{i=1}^m a_i}^\beta
    \leq 
    m^{\max\{0,\beta-1\}}\bbbr{\sum\nolimits_{i=1}^m\bp{\abs{a_i}^\beta}}
  .
  \eeq
\end{lemma}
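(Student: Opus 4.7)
The plan is to split the analysis into the two regimes $\beta \in [0,1]$ and $\beta \in [1,\infty)$ that naturally arise from the definition of $\max\{0,\beta-1\}$, and to handle each by a short convexity/subadditivity argument together with the triangle inequality.

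For the case $\beta \in [1,\infty)$, I would start from the triangle inequality $\babs{\sum_{i=1}^m a_i} \leq \sum_{i=1}^m \abs{a_i}$, raise both sides to the power $\beta$ (which preserves the inequality since $[0,\infty)\ni x\mapsto x^\beta$ is non-decreasing), and then invoke Jensen's inequality applied to the convex function $[0,\infty)\ni x\mapsto x^\beta\in[0,\infty)$ together with the probability measure $\tfrac1m\sum_{i=1}^m \delta_{\abs{a_i}}$ on $[0,\infty)$. This gives
\begin{equation*}
    \bbbp{\sum\nolimits_{i=1}^m \abs{a_i}}^{\!\beta}
    = m^\beta \bbbp{\tfrac1m \sum\nolimits_{i=1}^m \abs{a_i}}^{\!\beta}
    \leq m^\beta \cdot \tfrac1m \sum\nolimits_{i=1}^m \abs{a_i}^\beta
    = m^{\beta-1}\sum\nolimits_{i=1}^m\abs{a_i}^\beta,
\end{equation*}
which combined with the raised triangle inequality yields \eqref{eq:powsum} with exponent $\beta-1=\max\{0,\beta-1\}$.

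For the case $\beta \in [0,1]$, the desired estimate reduces to the subadditivity inequality $(x+y)^\beta\leq x^\beta+y^\beta$ for $x,y\in[0,\infty)$, which I would prove by considering $f(t)=(1+t)^\beta-1-t^\beta$ for $t\geq 0$: one has $f(0)=0$ and $f'(t)=\beta\br{(1+t)^{\beta-1}-t^{\beta-1}}\leq 0$ since $\beta-1\leq 0$ and $(1+t)\geq t$. An induction on $m$ then establishes $(\sum_{i=1}^m x_i)^\beta \leq \sum_{i=1}^m x_i^\beta$ for non-negative $x_i$, and applying this to $x_i=\abs{a_i}$ together with the raised triangle inequality gives $\babs{\sum_{i=1}^m a_i}^\beta \leq \sum_{i=1}^m \abs{a_i}^\beta$, which is \eqref{eq:powsum} with the exponent $0=\max\{0,\beta-1\}$.

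There is no serious obstacle in this proof: the result is an elementary and well-known two-case inequality, and both cases follow from standard one-line arguments (Jensen for $\beta\geq 1$, elementary subadditivity for $\beta\leq 1$). The only minor points of care are the trivial handling of $\beta=0$ under the convention $0^0=1$ (where the inequality becomes $1\leq m$) and the verification of the subadditivity of $x\mapsto x^\beta$ on $[0,\infty)$ for $\beta\in(0,1)$, which is the only non-completely-automatic step.
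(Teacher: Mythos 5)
Your proposal is correct and follows essentially the same route as the paper: a case split at $\beta=1$, with Jensen's inequality for the convex function $x\mapsto\abs{x}^\beta$ handling $\beta\in[1,\infty)$ and subadditivity of $x\mapsto x^\beta$ handling $\beta\in[0,1]$ (the paper applies Jensen directly to the signed $a_i$, which lets it skip the triangle-inequality step, and it simply asserts the $\beta\in[0,1]$ case that you prove via the derivative argument, but these are cosmetic differences). No gaps.
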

\begin{proof}[Proof of Lemma~\ref{lem:powsum}]
  Throughout this proof
    let $\varphi_\beta\colon \R\to\R$, $\beta\in[1,\infty)$, be the functions 
      which satisfy for all
        $\beta\in[1,\infty)$,
        $x\in\R$
      that
    \beq
      \varphi_\beta(x)=\abs{x}^\beta.
    \eeq
  Note that for all
    $\beta\in[0,1]$,
    $m\in\N$,
    $a_1,a_2,\dots,a_m\in\R$
  it holds that
  \beq
  \label{eq:powsumlt1}
    \bbbr{\sum\nolimits_{i=1}^m\abs{a_i}}^\beta
    \leq 
    2^0\bbbr{\sum\nolimits_{i=1}^m\bp{\abs{a_i}^\beta}}
  .
  \eeq
  Next observe that for all
    $\beta\in[1,\infty)$
  it holds that
    $\varphi_\beta$ is a convex function.
    Jensen's inequality 
    hence 
  establishes that for all
    $m\in\N$,
    $a_1,\dots,a_m\in\R$
  it holds that
  \beq
    \frac{\babs{\sum_{i=1}^m a_i}^\beta}{m^\beta}
    =
    \varphi_\beta\bbbp{\frac{\sum_{i=1}^m a_i}{m}}
    \leq
    \frac{\sum_{i=1}^m \varphi_\beta(a_i)}{m}
    =
    \frac{\sum_{i=1}^m (\abs{a_i}^\beta)}{m}
    .
  \eeq
    This
  implies that for all
    $m\in\N$,
    $a_1,\dots,a_m\in\R$
  it holds that
  \beq
    \bbabs{\sum\nolimits_{i=1}^m a_i}^\beta
    \leq
    m^{\beta-1}\sum\nolimits_{i=1}^m(\abs{a_i}^\beta)
    .
  \eeq
  The proof of Lemma~\ref{lem:powsum} is thus completed.
\end{proof}

\begin{lemma}
\label{lem:expbrown}
  Let $m\in\N$, 
  $T,c\in[0,\infty)$, 
  $\alpha\in[0,2)$,
  let $(\Omega,\mc F,\PP)$ be a probability space,
  let $W\colon[0,T]\times\Omega\to\R^m$ be a standard Brownian motion
    with continuous sample paths,
  and let $\norm{\cdot}\colon\R^m\to[0,\infty)$ be a norm.
  Then 
      it holds that
      \beq
      \label{eq:expbrownconc}
        \bbbEE{\sup_{t\in[0,T]\cap\Q} \exp\bp{c\,\norm{W(t)}^\alpha}}
        <
        \infty
        .
      \eeq
\end{lemma}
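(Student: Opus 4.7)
The strategy is to reduce the $m$-dimensional statement to the one-dimensional Lemma~\ref{lem:expbrownaux} applied to each component of $W$, exploiting the independence of the coordinate Brownian motions together with the equivalence of norms on $\R^m$.

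First I would use equivalence of norms: since all norms on $\R^m$ are equivalent, there exists $K\in(0,\infty)$ such that for every $v=(v_1,\dots,v_m)\in\R^m$ one has $\norm{v}\leq K\sum_{i=1}^m\abs{v_i}$. Combined with Lemma~\ref{lem:powsum} (applied with $\beta=\alpha$), this yields a finite constant $C\in[0,\infty)$ such that for all $t\in[0,T]$, $\omega\in\Omega$,
\beq
  c\,\norm{W(t,\omega)}^\alpha
  \leq
  c\,K^\alpha m^{\max\{0,\alpha-1\}}\sum_{i=1}^m\abs{W_i(t,\omega)}^\alpha
  =
  C\sum_{i=1}^m\abs{W_i(t,\omega)}^\alpha,
\eeq
where $W_1,\dots,W_m\colon[0,T]\times\Omega\to\R$ are the component processes of $W$. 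Monotonicity of $\exp$ and of the supremum then gives, for every $\omega\in\Omega$,
\ba
  \sup_{t\in[0,T]\cap\Q}\exp\bp{c\,\norm{W(t,\omega)}^\alpha}
  &\leq
  \exp\bbbp{C\sum_{i=1}^m\bbbbr{\sup_{t\in[0,T]\cap\Q}\abs{W_i(t,\omega)}^\alpha}}\\
  &=
  \prod_{i=1}^m\exp\bbbp{C\bbbbr{\sup_{t\in[0,T]\cap\Q}\abs{W_i(t,\omega)}^\alpha}}.
\ea
Each factor is an $\mc F$/$\mc B(\R)$-measurable random variable by Lemma~\ref{lem:supmeas1}, so the product is measurable and the left-hand side is measurable as well.

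Next I would take expectations. Because $W$ is a standard $m$-dimensional Brownian motion, the component processes $W_1,\dots,W_m$ are mutually independent, and hence so are the (measurable) random variables $\exp\bp{C\sup_{t\in[0,T]\cap\Q}\abs{W_i(t)}^\alpha}$ for $i\in\{1,2,\dots,m\}$. The product rule for expectations of independent nonnegative random variables therefore yields
\beq
  \bbbEE{\sup_{t\in[0,T]\cap\Q}\exp\bp{c\,\norm{W(t)}^\alpha}}
  \leq
  \prod_{i=1}^m \bbbEE{\exp\bbbp{C\bbbbr{\sup_{t\in[0,T]\cap\Q}\abs{W_i(t)}^\alpha}}}.
\eeq
Since each $W_i$ has continuous sample paths, $\sup_{t\in[0,T]\cap\Q}\abs{W_i(t,\omega)}^\alpha=\sup_{t\in[0,T]}\abs{W_i(t,\omega)}^\alpha$ for every $\omega\in\Omega$, so Lemma~\ref{lem:expbrownaux} (applied to each scalar Brownian motion $W_i$ with the constants $T$, $\alpha$, and $C$) shows that every factor on the right-hand side is finite. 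Finiteness of the finite product then establishes \eqref{eq:expbrownconc}.

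The only mildly delicate point is to justify passing from the bound $\norm{v}^\alpha\leq C'\sum_i\abs{v_i}^\alpha$ (pointwise in $t$) to a bound on the supremum: here I simply use that taking $\sup_t$ is monotone and distributes as an inequality over sums, which is why the product form above is available and independence can be invoked. No uniform attainment of the individual suprema at a common time is needed.
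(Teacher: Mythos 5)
Your proposal is correct and follows essentially the same route as the paper's proof: norm equivalence plus Lemma~\ref{lem:powsum} to bound $\norm{W(t)}^\alpha$ by a sum of componentwise powers, factorization of the exponential over the coordinates, independence of the component Brownian motions to split the expectation, and Lemma~\ref{lem:expbrownaux} (via Lemma~\ref{lem:supmeas1} to pass between $\sup_{[0,T]}$ and $\sup_{[0,T]\cap\Q}$) to make each factor finite. No gaps worth noting.
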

\begin{proof}[Proof of Lemma~\ref{lem:expbrown}]
  Throughout this proof
  let $W_i\colon [0,T]\times\Omega\to\R$, $i\in\{1,2,\dots,m\}$,
  be the functions which satisfy for all $t\in [0,T]$, $\omega\in\Omega$ that
  \beq
  W(t,\omega)=\bp{W_1(t,\omega),W_2(t,\omega),\dots,W_m(t,\omega)}
  \eeq
  and let $K\in[0,\infty]$ satisfy
  \beq
  K
  =
  \sup_{z=(z_1,z_2,\dots,z_m)\in\R^m\setminus\{0\}}\bbbp{\frac{\norm{z}}{\bp{\sum_{i=1}^m\abs{z_i}}}}
  .
  \eeq
  Note that 
    the fact that all norms on $\R^m$ are equivalent
  ensures that $K<\infty$.
  Hence, we obtain that for all
    $\omega\in\Omega$
  it holds that
  \ba
  \sup_{t\in[0,T]}(\norm{W(t,\omega)}^\alpha)
  &\leq
  \sup_{t\in[0,T]}\bp{\bbr{K\bbr{{\textstyle\sum_{i=1}^m \abs{W_i(t,\omega)}}}}^\alpha}
  \\&=
  K^\alpha\bbbbr{\sup_{t\in[0,T]}\bp{\bbr{\textstyle\sum_{i=1}^m \abs{W_i(t,\omega)}}^\alpha}}
  .
  \ea
    This,
    the fact that $\alpha<2$,
    and Lemma~\ref{lem:powsum}
  show that for all
    $\omega\in\Omega$
  it holds that
  \ba
    c\bbbbr{\sup_{t\in[0,T]}(\norm{W(t,\omega)}^\alpha)}
    &\leq
    cK^\alpha\bbbbr{\sup_{t\in[0,T]}\bp{m \bbr{\textstyle\sum_{i=1}^m (\abs{W_i(t,\omega)}^\alpha)}}}
    \\&=
    mcK^\alpha\bbbbr{\sup_{t\in[0,T]}\bp{{\textstyle\sum_{i=1}^m (\abs{W_i(t,\omega)}^\alpha})}}
    \\&\leq
    mcK^\alpha\bbbbbr{\sum_{i=1}^m\bbbp{\sup_{t\in[0,T]}\bp{\abs{W_i(t,\omega)}^\alpha}}}
    \\&=
    \sum_{i=1}^m\bbbp{mcK^\alpha\bbbbr{\sup_{t\in[0,T]}\bp{\abs{W_i(t,\omega)}^\alpha}}}
    .
  \ea
  Hence, we obtain that for all
    $\omega\in\Omega$
  it holds that
  \beq
  \label{eq:expWprod}
    \exp\bbbp{c\bbbbr{\sup_{t\in[0,T]}(\norm{W(t,\omega)}^\alpha)}}
    \leq
    \prod_{i=1}^m\exp\bbbp{mcK^\alpha\bbbbr{\sup_{t\in[0,T]}\bp{\abs{W_i(t,\omega)}^\alpha}}}
    .
  \eeq
  In the next step we note that
    Lemma~\ref{lem:supmeas1}
    (with
      $T\is T$,
      $(\Omega,\mc F,\PP)\is(\Omega,\mc F,\PP)$,
      $Y\is\pp{[0,T]\times\Omega\ni(t,\omega)\mapsto \norm{W(t,\omega)}^\alpha\in[0,\infty)}$
    in the notation of Lemma~\ref{lem:supmeas1})
  ensures 
      that for all 
        $\omega\in\Omega$ 
      it holds that
      \beq
        \label{eq:WcapQ}
        \sup_{t\in[0,T]}(\norm{W(t,\omega)}^\alpha)
        =
        \sup_{t\in[0,T]\cap\Q}(\norm{W(t,\omega)}^\alpha).
      \eeq
  Combining 
    this 
  with 
    \eqref{eq:expWprod}
  shows that for all
    $\omega\in\Omega$
  it holds that
  \ba
  \label{eq:expWprod2}
    \sup_{t\in[0,T]\cap\Q}\exp\bp{c\,\norm{W(t,\omega)}^\alpha}
    &=
    \exp\bbbp{c\bbbbr{\sup_{t\in[0,T]\cap\Q}(\norm{W(t,\omega)}^\alpha)}}
    \\&\leq
    \prod_{i=1}^m\exp\bbbp{mcK^\alpha\bbbbr{\sup_{t\in[0,T]}\bp{\abs{W_i(t,\omega)}^\alpha}}}
    .
  \ea
  In addition, observe that
    Lemma~\ref{lem:expbrownaux}
    (with
      $T\is T$,
      $c\is 2^mcK^\alpha$,
      $\alpha\is\alpha$,
      $(\Omega,\mc F,\PP)\is(\Omega,\mc F,\PP)$,
      $W\is W_i$
      for
      $i\in\{1,2,\dots,m\}$
    in the notation of Lemma~\ref{lem:expbrownaux})
  proves 
  \begin{enumerate}[label=(\Alph{enumi})]
  \item 
    that for all
      $i\in\{1,2,\dots,m\}$
    it holds that
      $\Omega\ni\omega\mapsto \sup_{t\in[0,T]}\bp{\abs{W_i(t,\omega)}^\alpha}\in\R$
      is an $\mc F$/$\mc B(\R)$-measurable function
    and
  \item
    that for all
      $i\in\{1,2,\dots,m\}$
    it holds that
    \beq
    \label{eq:Eexpsup}
      \bbbEE{\exp\bbbp{mcK^\alpha\bbbbr{\sup_{t\in[0,T]}\bp{\abs{W_i(t)}^\alpha}}}}
      <\infty
      .
    \eeq
  \end{enumerate}
  Note that
    the fact that
      $W_1,W_2,\dots,W_m$ are independent stochastic processes,
    \eqref{eq:expWprod2},
    and \eqref{eq:Eexpsup}
  establish that
  \beq
  \label{eq:Eexpfin}
    \bbbEE{\sup_{t\in[0,T]\cap\Q}\exp\bp{c\,\norm{W(t)}^\alpha}}
    \leq
    \prod_{i=1}^m\bbbEE{\exp\bbbp{mcK^\alpha\bbbbr{\sup_{t\in[0,T]}\bp{\abs{W_i(t)}^\alpha}}}}
    <\infty
    .
  \eeq
  The proof of Lemma~\ref{lem:expbrown} is thus completed.
\end{proof}

\begin{lemma}
  \label{lem:regcrithelp}
  Let $m\in\N$, 
  $T,C\in[0,\infty)$, 
  $\alpha\in[0,2)$,
  let $\norm{\cdot}\colon\R^m\to[0,\infty)$ be a norm,
  let $(\Omega,\mc F,\PP)$ be a probability space,
  let $W\colon[0,T]\times\Omega\to\R^m$ be a standard Brownian motion
    with continuous sample paths,
  and let $\varphi\colon \R^m\to[0,\infty)$ satisfy 
    for all 
      $z\in\R^m$ 
    that
    $
    \varphi(z)\leq C(1+\norm z^\alpha)
    $.
  Then it holds for all 
    $c\in[0,\infty)$ 
  that
  \beq
  \label{eq:expphiWbndc}
    \bbbEE{\sup_{t\in[0,T]\cap\Q}\exp\bp{c\,\varphi(W(t))}}<\infty.
  \eeq
\end{lemma}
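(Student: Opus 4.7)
The plan is to reduce directly to Lemma~\ref{lem:expbrown}. The hypothesis $\varphi(z) \leq C(1+\norm{z}^\alpha)$ is a pointwise bound, so for every $c\in[0,\infty)$ and every $z\in\R^m$ we have
\beq
\exp(c\,\varphi(z)) \leq \exp\bp{cC(1+\norm{z}^\alpha)} = \exp(cC)\exp\bp{cC\norm{z}^\alpha}.
\eeq
Substituting $z = W(t,\omega)$ and taking the supremum over $t\in[0,T]\cap\Q$ pulls the constant factor out:
\beq
\sup_{t\in[0,T]\cap\Q}\exp\bp{c\,\varphi(W(t,\omega))} \leq \exp(cC)\bbbr{\sup_{t\in[0,T]\cap\Q}\exp\bp{cC\,\norm{W(t,\omega)}^\alpha}}.
\eeq

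Taking expectations and invoking Lemma~\ref{lem:expbrown} (with $c \is cC$, $\alpha\is\alpha$, $m\is m$, $T\is T$, $\norm\cdot\is\norm\cdot$, and the same Brownian motion $W$ in the notation of Lemma~\ref{lem:expbrown}) then yields
\beq
\bbbEE{\sup_{t\in[0,T]\cap\Q}\exp\bp{c\,\varphi(W(t))}} \leq \exp(cC)\,\bbbEE{\sup_{t\in[0,T]\cap\Q}\exp\bp{cC\,\norm{W(t)}^\alpha}} < \infty,
\eeq
which is precisely \eqref{eq:expphiWbndc}.

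There is essentially no obstacle here: the only nontrivial content (exponential integrability of the running supremum of $\norm{W(t)}^\alpha$ for $\alpha<2$) has already been packaged into Lemma~\ref{lem:expbrown}, and the sub-quadratic growth condition on $\varphi$ is designed to make this reduction immediate. The one minor point worth mentioning is measurability of $\Omega\ni\omega\mapsto \sup_{t\in[0,T]\cap\Q}\exp(c\,\varphi(W(t,\omega)))\in[0,\infty]$, which follows from Lemma~\ref{lem:sup_measurability} applied to the countable family of $\mc F$/$\mc B(\R)$-measurable functions $(\omega\mapsto \exp(c\,\varphi(W(t,\omega))))_{t\in[0,T]\cap\Q}$, using continuity of $\varphi$ is not needed since only pointwise measurability of each $W(t,\cdot)$ is used together with any Borel-measurable $\varphi$ (here $\varphi$ being Borel measurable is implicit, and if not assumed one may replace $\varphi$ by the dominating function $C(1+\norm\cdot^\alpha)$ inside the expectation and thereby avoid the issue entirely).
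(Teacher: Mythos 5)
Your proposal is correct and follows essentially the same route as the paper's own proof: bound $\exp(c\,\varphi(W(t)))$ pointwise by $\exp(cC)\exp(cC\,\norm{W(t)}^\alpha)$ using the growth hypothesis, pull the constant out of the supremum and the expectation, and conclude with Lemma~\ref{lem:expbrown} applied with $c\is cC$. The additional measurability remark is harmless but goes beyond what the paper records; the core argument matches.
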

\begin{proof}[Proof of Lemma~\ref{lem:regcrithelp}]
  Note that
    the assumption that 
      for all 
        $z\in\R^m$ 
      it holds that
        $\varphi(z)\leq C(1+\norm z^\alpha)$
  implies that for all
    $c\in[0,\infty)$,
    $\omega\in\Omega$
  it holds that
  \ba
  \label{eq:bndexpphiW}
    \sup_{t\in[0,T]\cap\Q}\exp\bp{c\,\varphi(W(t,\omega))}
    &\leq 
    \sup_{t\in[0,T]\cap\Q}\exp\bp{cC(1+\norm{W(t,\omega)}^\alpha)}\\
    &=
    \sup_{t\in[0,T]\cap\Q}\exp\bp{cC+cC\,\norm{W(t,\omega)}^\alpha}
    \\&=
    \exp(cC)\bbbbr{\sup_{t\in[0,T]\cap\Q}\exp\bp{cC\,\norm{W(t,\omega)}^\alpha}}
    .
  \ea
  Combining
    this
  with 
    Lemma~\ref{lem:expbrown}
  establishes that for all
    $c\in[0,\infty)$
  it holds that
  \beq
    \bbbEE{\sup_{t\in[0,T]\cap\Q}\exp\bp{c\,\varphi(W(t))}}
    \leq
    \exp(cC)\,\bbbEE{\sup_{t\in[0,T]\cap\Q}\exp\bp{cC\,\norm{W(t)}^\alpha}}
    <\infty
    .
  \eeq
  This completes the proof of Lemma~\ref{lem:regcrithelp}.
\end{proof}

\begin{lemma}
\label{lem:expWr}
  Let $d,m\in\N$, 
  $T,r\in[0,\infty)$,
  $\sigma\in\R^{d\times m}$,
  let $\norm{\cdot}\colon\R^d\to[0,\infty)$ be a norm,
  let $(\Omega,\mc F,\PP)$ be a probability space,
  and let $W\colon[0,T]\times\Omega\to\R^m$ be a standard Brownian motion
    with continuous sample paths.
  Then it holds that
  \beq
  \label{eq:expWr}
    \bbbEE{\sup_{t\in[0,T]\cap\Q} (\norm{\sigma W(t)}^r)}<\infty.
  \eeq
\end{lemma}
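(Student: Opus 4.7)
The plan is to reduce the polynomial moment estimate \eqref{eq:expWr} to the exponential moment estimate supplied by Lemma~\ref{lem:regcrithelp}, exploiting the elementary fact that polynomials are dominated by exponentials. Concretely, I would fix any auxiliary norm $\nnorm{\cdot}\colon\R^m\to[0,\infty)$ (say, the Euclidean norm). Since $\R^m\ni z\mapsto \norm{\sigma z}\in[0,\infty)$ is a seminorm and every seminorm on $\R^m$ is dominated by any norm, there exists $K\in[0,\infty)$ with $\norm{\sigma z}\leq K\nnorm{z}\leq K(1+\nnorm{z}^1)$ for every $z\in\R^m$. Defining $\varphi\colon\R^m\to[0,\infty)$ by $\varphi(z)=\norm{\sigma z}$, the hypothesis of Lemma~\ref{lem:regcrithelp} (with $\alpha\is 1$, $C\is K$, and $\norm\cdot\is\nnorm\cdot$) is satisfied. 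Applying that lemma with $c\is 1$ then yields $\bbbEE{\sup_{t\in[0,T]\cap\Q}\exp(\norm{\sigma W(t)})}<\infty$.

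Next, I would use the observation that the continuous function $[0,\infty)\ni x\mapsto x^r\exp(-x)\in[0,\infty)$ tends to $0$ as $x\to\infty$ and hence is bounded by some constant $M_r\in[0,\infty)$, which gives the pointwise inequality $x^r\leq M_r\exp(x)$ for all $x\in[0,\infty)$. Applying this with $x=\norm{\sigma W(t,\omega)}$ and passing to the supremum over $t\in[0,T]\cap\Q$ produces $\sup_{t\in[0,T]\cap\Q}(\norm{\sigma W(t)}^r)\leq M_r\sup_{t\in[0,T]\cap\Q}\exp(\norm{\sigma W(t)})$; taking expectations and combining with the previous step then establishes \eqref{eq:expWr}.

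There is no substantial obstacle here: Lemma~\ref{lem:regcrithelp} carries all the probabilistic content, and the only new ingredient is the elementary majorization $x^r\leq M_r\exp(x)$, used to convert the exponential moment bound into the desired polynomial one.
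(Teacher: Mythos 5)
Your proposal is correct and follows essentially the same route as the paper: both reduce \eqref{eq:expWr} to Lemma~\ref{lem:regcrithelp} with $\alpha\is 1$ after dominating $\norm{\sigma z}$ by a constant multiple of the Euclidean norm of $z$. The only (cosmetic) difference is the choice of $\varphi$ and the accompanying elementary estimate: the paper takes $\varphi(z)=\ln(1+\norm{\sigma z})$ with $c\is r$, using $\norm{\sigma W(t)}^r\leq\exp\bp{r\ln(1+\norm{\sigma W(t)})}$, whereas you take $\varphi(z)=\norm{\sigma z}$ with $c\is 1$ and invoke the majorization $x^r\leq M_r\exp(x)$.
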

\begin{proof}[Proof of Lemma~\ref{lem:expWr}]
  Throughout this proof 
  let $\nnorm{\cdot}\colon\R^m\to[0,\infty)$ be the $m$-dimensional Euclidean norm
  and let $C\in[0,\infty]$ satisfy
  \beq
    C
    =
    \sup_{x\in\R^m\setminus\{0\}}\bbbp{\frac{\norm{\sigma x}}{\nnorm{x}}}
    .
  \eeq
  Note that
  \beq
    C
    =
    \sup_{x\in\R^m\setminus\{0\}}\,\bbbnorm{\sigma\bbbp{\frac{x}{\nnorm{x}}}}
    \leq
    \sup_{y\in\{v\in\R^m\colon \nnorm{v}=1\}}\norm{\sigma y}
    .
  \eeq
    The fact that $\R^m\ni y\mapsto \norm{\sigma y}\in[0,\infty)$ is a continuous function
    and the fact that $\{v\in\R^m\colon \nnorm{v}=1\}$ is a compact set
    hence
  prove that
  \beq
  \label{eq:Cfin}
    C<\infty.
  \eeq
  In the next step we observe that for all
    $t\in[0,T]$,
    $\omega\in\Omega$
  it holds that
  \ba
    \label{eq:Zbnd}
    \norm{\sigma W(t,\omega)}^r
    \leq
    [1+\norm{\sigma W(t,\omega)}]^r
    =
    \exp\bp{r\ln(1+\norm{\sigma W(t,\omega)})}
    .
  \ea
  Furthermore, note that 
    \eqref{eq:Cfin}
    and the fact that
      for all
        $y\in[0,\infty)$
      it holds that
        $\ln(1+y)\leq y$
  ensure that for all
    $z\in\R^m$
  it holds that
  \beq
    \ln(1+\norm{\sigma z})
    \leq 
    \norm{\sigma z}
    \leq 
    C \nnorm{z}
    \leq 
    C(1+\nnorm{z}).
  \eeq
    This,
    \eqref{eq:Cfin},
    and Lemma~\ref{lem:regcrithelp}
      (with
        $m\leftarrow m$,
        $T\leftarrow T$,
        $C\leftarrow C$,
        $\alpha\leftarrow 1$,
        $\norm{\cdot}\is \nnorm\cdot$,
        $(\Omega,\mc F,\PP)\is(\Omega,\mc F,\PP)$,
        $W\is W$,
        $\varphi\is \pp{\R^m\ni z\mapsto \ln(1+\norm{\sigma z})\in[0,\infty)}$
      in the notation of Lemma~\ref{lem:regcrithelp})
  show that
  \beq
    \bbbEE{\sup_{t\in[0,T]\cap\Q} \exp\bp{r\ln(1+\norm{\sigma W(t)})}}<\infty.
  \eeq
  Combining
    this
  with
    \eqref{eq:Zbnd}
  establishes that
  \beq
    \bbbEE{\sup_{t\in[0,T]\cap\Q} (\norm{\sigma W(t)}^r)}
    \leq
    \bbbEE{\sup_{t\in[0,T]\cap\Q} \exp\bp{r\ln(1+\norm{\sigma W(t)})}}
    <\infty
    .
  \eeq
  This completes the proof of Lemma~\ref{lem:expWr}.
\end{proof}

\subsection{Integrability properties for solutions of SDEs}
\label{subsec:intsdes}

\begin{lemma}
  \label{lem:regcrit}
  Let $d,m\in\N$, 
  $T\in[0,\infty)$,
  $\mu\in C^1(\R^d,\R^d)$,
  $\sigma\in\R^{d\times m}$,
  $\varphi\in C(\R^m,[0,\infty))$,
  $V\in C^1(\R^d,[0,\infty))$,
  let $\norm{\cdot}\colon\R^d\to[0,\infty)$ be a norm,
  assume for all $x\in\R^d$, $z\in\R^m$ that
  $
  V'(x)\mu(x+\sigma z)\leq \varphi(z)V(x)$ and $\norm x\leq V(x)$,
  let $(\Omega,\mc F,\PP)$ be a probability space,
  let $W\colon[0,T]\times\Omega\to\R^m$ be a stochastic process
    with continuous sample paths,
  let $X^x\colon[0,T]\times\Omega\to\R^d$, $x\in\R^d$, be 
  stochastic processes with continuous sample paths,
  assume for all $c\in[0,\infty)$ that
  $\bEE{
    \sup_{t\in[0,T]\cap\Q}\exp\bp{c\,\varphi(W(t))}
  }
  +\bEE{
    \sup_{t\in[0,T]\cap\Q}(\norm{\sigma W(t)}^c)
  }
  <\infty$,
  and assume for all 
    $x\in\R^d$, 
    $t\in[0,T]$, 
    $\omega\in\Omega$
  that
  \beq
  X^x(t,\omega)=x+\int_0^t\mu(X^x(s,\omega))\,\diff s+\sigma W(t,\omega).
  \eeq
  Then 
  \begin{enumerate}[label=(\roman{enumi}),ref=(\roman{enumi})]
  \item
    it holds for all 
      $R,r\in[0,\infty)$ 
    that
      \beq
      \Omega\ni\omega\mapsto \bbbbr{
        \sup_{x\in\{z\in\R^d\colon\norm z\leq R\}}\,
          \sup_{t\in[0,T]}
            \bp{\norm{X^x(t,\omega)}^r}
      }\in[0,\infty]
      \eeq
        is an $\mc F$/$\mc B([0,\infty])$-measurable function
    and
  \item
    it holds for all 
      $R,r\in[0,\infty)$ 
    that
    \beq
      \bbbEE{ \sup_{x\in\{z\in\R^d\colon \norm{z}\leq R\}}\,\sup_{t\in[0,T]} \bp{\norm{X^x(t)}^{r}} }
      <\infty.
    \eeq
  \end{enumerate}
\end{lemma}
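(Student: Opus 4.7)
\emph{Proof plan.} The plan is to combine the pathwise a priori bound from Lemma~\ref{lem:bndy} with the joint continuity in $(t,x)$ of $X^x(t,\omega)$ supplied by Lemma~\ref{lem:existX} and the measurability tools from Section~\ref{section3}, and then to match the resulting exponential and polynomial expressions against the two hypothesised moment bounds on $W$. The two statements of the lemma will be proved in parallel.

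For the measurability claim~(i), Lemma~\ref{lem:existX}\ref{enum:lemexistX:1}--\ref{enum:lemexistX:2a} yields that each $X^x$ is a stochastic process and that for every $\omega\in\Omega$ the map $[0,T]\times\R^d\ni(t,x)\mapsto X^x(t,\omega)\in\R^d$ is continuous. Composing with the continuous function $\R^d\ni v\mapsto\norm v^r\in[0,\infty)$, the family $Y^x(t,\omega)\coloneqq\norm{X^x(t,\omega)}^r$ satisfies the hypotheses of Lemma~\ref{lem:supmeas}, which immediately delivers the $\mc F$/$\mc B([0,\infty])$-measurability of $\omega\mapsto\sup_{\norm x\leq R}\sup_{t\in[0,T]}\norm{X^x(t,\omega)}^r$. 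For~(ii), I apply Lemma~\ref{lem:bndy} path by path, with $J\is[0,T]$, $\xi\is x$, $y\is(t\mapsto X^x(t,\omega))$, and $w\is W(\cdot,\omega)$, to obtain for every $x\in\R^d$ and every $\omega\in\Omega$ the pathwise estimate
\begin{equation*}
  \sup_{t\in[0,T]}\norm{X^x(t,\omega)}\leq V(x)\exp\bbbp{T\bbbbr{\sup_{s\in[0,T]}\varphi(W(s,\omega))}}+\sup_{t\in[0,T]}\norm{\sigma W(t,\omega)}.
\end{equation*}
The continuity of $V$ and the compactness of $\{z\in\R^d\colon\norm z\leq R\}$ give $M_R\coloneqq\sup_{\norm x\leq R}V(x)<\infty$. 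Combining this with Lemma~\ref{lem:supmeas1} (to replace $\sup_{[0,T]}$ by $\sup_{[0,T]\cap\Q}$, using that $\varphi\circ W$ and $\sigma W$ have continuous sample paths), with the monotonicity identity $\exp(c\sup_s f(s))=\sup_s\exp(cf(s))$ for $c\geq 0$, and with Lemma~\ref{lem:powsum} applied with $m=2$ and $\beta=r$ to take the $r$-th power, yields
\begin{equation*}
  \sup_{\norm x\leq R}\sup_{t\in[0,T]}\norm{X^x(t,\omega)}^r\leq 2^{\max\{0,r-1\}}\bbbbr{M_R^r\bbbbr{\sup_{s\in[0,T]\cap\Q}\exp\bp{rT\,\varphi(W(s,\omega))}}+\sup_{t\in[0,T]\cap\Q}\norm{\sigma W(t,\omega)}^r}.
\end{equation*}
Taking expectation and invoking the two moment hypotheses with $c=rT$ and with $c=r$ establishes~(ii).

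The main conceptual obstacle is the passage from the pathwise Lyapunov bound, which holds separately for each fixed $x$, to the uniform-in-$x$ bound over the uncountable ball $\{\norm x\leq R\}$; this is handled by the $C^{0,1}$ regularity of $(t,x)\mapsto X^x(t,\omega)$ from Lemma~\ref{lem:existX} together with the continuity of $V$ on a compact ball, which also underlies the measurability part via Lemma~\ref{lem:supmeas}. After this step everything is routine: the exponential moment assumption on $\varphi(W)$ was tailored precisely to absorb the Gronwall factor produced by Lemma~\ref{lem:bndy}, and the polynomial moment assumption on $\sigma W$ handles the additive noise term.
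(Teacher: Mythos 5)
Your proposal is correct and follows essentially the same route as the paper's proof: the pathwise Lyapunov bound from Lemma~\ref{lem:bndy}, the elementary power-of-a-sum inequality, Lemma~\ref{lem:supmeas1} to pass to rational suprema, the joint $C^{0,1}$ regularity in $(t,x)$ combined with Lemma~\ref{lem:supmeas} for measurability, and the two moment hypotheses with $c=rT$ and $c=r$. The only cosmetic difference is that you obtain the joint continuity via Lemma~\ref{lem:existX} (identifying the given processes with the unique solutions), whereas the paper applies the deterministic Lemma~\ref{lem:diffy} pathwise; this is immaterial.
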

\begin{proof}[Proof of Lemma~\ref{lem:regcrit}]
  Throughout this proof 
    let $Y,\,Z\colon\Omega\to[0,\infty)$
    \intrtype{be the functions which }%
  satisfy for all
    $\omega\in\Omega$
  that
  \beq
    Y(\omega)=\sup_{t\in[0,T]}\exp\bp{\varphi(W(t,\omega))}\qqandqq
    Z(\omega)=\sup_{t\in[0,T]} \norm{\sigma W(t,\omega)}.
  \eeq
  Note that 
    Lemma~\ref{lem:bndy} 
      (with
      $d\is d$,
      $m\is m$,
      $T\is T$,
      $\xi\is x$, 
      $\mu\is\mu$,
      $\sigma\is\sigma$,
      $\varphi\is\varphi$,
      $V\is V$,
      $\norm\cdot\is\norm\cdot$,
      $J\is [0,T]$, 
      $y\is \pp{[0,T]\ni t\mapsto X^x(t,\omega)\in\R^d}$, 
      $w\is \pp{[0,T]\ni t\mapsto W(t,\omega)\in\R^m}$
      for
      $x\in\R^d$, 
      $\omega\in\Omega$
      in the notation of Lemma~\ref{lem:bndy}) 
  ensures that for all 
    $x\in\R^d$, 
    $\omega\in\Omega$
  it holds that
  \ba
    \sup_{t\in[0,T]}\norm{X^x(t,\omega)}
    &\leq 
    V(x)\exp\bbbp{ T\bbbbr{\sup_{t\in[0,T]} \varphi(W(t,\omega))}}+\bbbbr{\sup_{t\in[0,T]} \norm{\sigma W(t,\omega)} }
    \\&=
    V(x)\bbbbr{\sup_{t\in[0,T]}\exp\bp{  \varphi(W(t,\omega))}}^T+\bbbbr{\sup_{t\in[0,T]} \norm{\sigma W(t,\omega)} }
    \\&=
    V(x)[Y(\omega)]^T+Z(\omega)
    .
  \ea
    The hypothesis that for all
      $x\in\R^d$,
      $\omega\in\Omega$
    it holds that
      $[0,T]\ni t\mapsto X^x(t,\omega)\in\R^d$ is a continuous function
    and the fact that
      for all
        $a,b\in\R$,
        $r\in[0,\infty)$
      it holds that
        $\abs{a+b}^r\leq 2^r(\abs{a}^r+\abs{b}^r)$
    hence 
  ensure that for all
    $\omega\in\Omega$,
    $R,r\in[0,\infty)$
  it holds that
  \ba
    \label{eq:bndsupsup}
    &\sup_{x\in\{z\in\R^d\colon \norm{z}\leq R\}}\sup_{t\in[0,T]}\bp{\norm{X^x(t,\omega)}^r}\\
    &=
    \sup_{x\in\{z\in\R^d\colon \norm{z}\leq R\}}\bbbp{\bbbbr{\sup_{t\in[0,T]}\norm{X^x(t,\omega)}}^r}\\
    &\leq 
    \sup_{x\in\{z\in\R^d\colon \norm{z}\leq R\}}\bp{2^r\bp{\br{V(x)}^r\br{Y(\omega)}^{Tr}+\br{Z(\omega)}^r}}\\
    &=
    2^r\bp{\bbr{\sup\nolimits_{x\in\{z\in\R^d\colon \norm{z}\leq R\}} V(x)}^r\br{Y(\omega)}^{Tr}+\br{Z(\omega)}^r}
    .
  \ea
  Next we combine
    the assumption that 
      for all 
        $\omega\in\Omega$ 
      it holds that 
        $[0,T]\ni t\mapsto W(t,\omega)\in\R^m$ is a continuous function
    and the assumption that 
      $\varphi$ is a continuous function
    with Lemma~\ref{lem:supmeas1}
  to obtain that 
  \begin{enumerate}[label=(\alph{enumi}),ref=(\alph{enumi})]
  \item
    for all 
      $\omega\in\Omega$ 
    it holds that
    \beq
    \label{eq:YZalt}
      Y(\omega)=\sup_{t\in[0,T]\cap\Q}\exp\bp{\varphi(W(t,\omega))}
      \qqandqq
      Z(\omega)=\sup_{t\in[0,T]\cap \Q} \norm{\sigma W(t,\omega)}
    \eeq
  and
  \item
    it holds that $Y$ and $Z$ are $\mc F$/$\mc B([0,\infty))$-measurable functions.
  \end{enumerate}
  Moreover, note that
    Lemma~\ref{lem:diffy}
    (with
      $d\is d$,
      $m\is m$,
      $T\is T$,
      $\mu\is \mu$,
      $\sigma\is\sigma$,
      $w\is ([0,T]\ni t\mapsto W(t,\omega)\in\R^m)$,
      $(y^x)_{x\in\R^d}\is([0,T]\ni t\mapsto X^x(t,\omega)\in\R^d)_{x\in\R^d}$
      for $\omega\in\Omega$
    in the notation of Lemma~\ref{lem:diffy})
  ensures that for all 
    $\omega\in\Omega$
  it holds that
  \beq
    ([0,T]\times\R^d\ni(t,x)\mapsto X^x(t,\omega)\in\R^d)\in C^{0,1}([0,T]\times\R^d,\R^d)
    .
  \eeq
  Combining
    this
    with Lemma~\ref{lem:supmeas}
  shows that for all
    $R,r\in[0,\infty)$
  it holds that
  \beq
  \label{eq:supsupmeas}
    \Omega\ni\omega\mapsto \bbbbr{
      \sup_{x\in\{z\in\R^d\colon\norm z\leq R\}}\,
        \sup_{t\in[0,T]}\bp{\norm{X^x(t,\omega)}^r}
    }\in[0,\infty]
  \eeq
  is an $\mc F$/$\mc B([0,\infty])$-measurable function.
  In the next step we observe that
    the assumption that 
      for all 
        $c\in[0,\infty)$
      it holds that
        $\bEE{\sup_{t\in[0,T]\cap\Q}\exp\bp{c\,\varphi(W(t))}}<\infty$,
    \eqref{eq:YZalt},
    and the fact that $Y$ is an $\mc F$/$\mc B([0,\infty))$-measurable function
  ensure that for all
    $r\in[0,\infty)$
  it holds that
  \beq
    \label{eq:YTrfin}
    \bEE{ Y^{Tr} }
    =
    \bbbEE{\sup_{t\in[0,T]\cap\Q} \exp\bp{Tr\,\varphi(W(t))} }
    <\infty.
  \eeq
  In addition, note that
    the hypothesis that $V$ is a continuous function
  implies that for all
    $R\in[0,\infty)$
  it holds that
  \beq
    \label{eq:supVfin}
    \sup_{x\in\{z\in\R^d\colon \norm{z}\leq R\}} V(x)<\infty.
  \eeq
  Furthermore, observe that
    \eqref{eq:YZalt},
    the fact that $Z$ is an $\mc F$/$\mc B([0,\infty))$-measurable function,
    and the hypothesis that 
      for all 
        $c\in[0,\infty)$ 
      it holds that
        $\bEE{
          \sup_{t\in[0,T]\cap\Q}(\norm{\sigma W(t)}^c)
        }
        <\infty$
  show that for all
    $r\in[0,\infty)$
  it holds that
  \beq
  \EE[Z^r]
  =\bbbEE{\sup_{t\in[0,T]\cap \Q} \bp{\norm{\sigma W(t)}^r}}
  <\infty
  .
  \eeq
  Combining
    this,
    \eqref{eq:supsupmeas}, 
    \eqref{eq:YTrfin},
    and \eqref{eq:supVfin}
  with
    \eqref{eq:bndsupsup}
  implies that for all
    $R,r\in[0,\infty)$
  it holds that
  \ba
    &\bbbEE{\sup_{x\in\{z\in\R^d\colon \norm{z}\leq R\}}\sup_{t\in[0,T]}\bp{\norm{X^x(t)}^r}}
    \\&\leq 
    2^r\bp{\bbr{\sup\nolimits_{x\in\{z\in\R^d\colon \norm{z}\leq R\}} V(x)}^r\bEE{Y^{Tr}}+\bEE{Z^r}}
    <\infty
    .
  \ea
  This completes the proof of Lemma~\ref{lem:regcrit}.
\end{proof}

\section{Conditional regularity with respect to the initial value for SDEs}\label{section7}

In this section we study in Lemmas~\ref{lem:lem11} and 
\ref{lem:lem1} in Subsection~\ref{subsec:condsubhoelder} below 
regularity properties of solutions of 
certain additive noise driven SDEs with respect to their initial values. 
In particular, in Lemma~\ref{lem:lem1} we establish in inequality \eqref{eq:lem1conc} 
a quantitative 
estimate for the difference of two solutions of certain additive noise driven SDEs. 
Our proof of Lemma~\ref{lem:lem1} is based on an application of Lemma~\ref{lem:lem11} which 
establishes a similar statement in wider generality. 
Our proof of Lemma~\ref{lem:lem11}, in turn, uses, besides other arguments, the auxiliary 
results in Lemma~\ref{lem:distlem} in Subsection~\ref{subsec:loclipde} 
below and in Lemmas~\ref{lem:faux2} and \ref{lem:faux} in Subsection~\ref{subsec:condsubhoelder} 
below.
For completeness we include in this section also the detailed proofs for 
the two elementary results in Lemmas~\ref{lem:faux2} and \ref{lem:faux}.

\subsection{Conditional local Lipschitz continuity for deterministic DEs}
\label{subsec:loclipde}

\begin{lemma}
\label{lem:distlem}
  Let $d\in\N$, 
  $T\in[0,\infty)$,
  $\varphi\in C(\R^d,[0,\infty))$,
  let $\norm{\cdot}\colon\R^d\to[0,\infty)$ be a norm,
  let $z^x\in C([0,T],\R^d)$, $x\in\R^d$, be functions
  which satisfy
    for all 
      $t\in[0,T]$ 
    that 
      $(\R^d\ni x\mapsto z^x(t)\in\R^d)\in C^1(\R^d,\R^d)$,
  and assume for all
    $x\in\R^d$,
    $t\in[0,T]$,
    $h\in\bigl\{v\in \R^d \colon \bp{[0,T]\ni s\mapsto \bp{\tfrac{\partial}{\partial x} z^x(s) }(v)\in\R^d}\text{ is a $\mc B([0,T])$/$\mc B(\R^d)$-mea\-sur\-able function}\bigr\}$
  that
    $\int_0^T \bnorm{\bp{ \tfrac{\partial}{\partial x} z^x(s) }(h)}\,\diff s<\infty$
    and
    \beq
    \label{eq:bndDxz}
      \bnorm{\bp{ \tfrac{\partial}{\partial x} z^x(t) }(h)}
      \leq 
      \norm h+\int_0^t\varphi(z^x(s))\,\bnorm{\bp{\tfrac\partial{\partial x}z^x(s)}(h)}\,\diff s.
    \eeq
  Then it holds for all
    $x,y\in\R^d$,
    $t\in[0,T]$
  that
  \beq
    \norm{z^x(t)-z^y(t)}
    \leq
    \sup_{u\in[0,1]} \bbbbr{\norm{x-y}\exp\bbbp{T\bbbbr{ \sup_{s\in[0,T]}\varphi(z^{(1-u)y+ux}(s)) }}}
    .
  \eeq
\end{lemma}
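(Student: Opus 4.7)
The plan is to represent the difference $z^x(t)-z^y(t)$ via the fundamental theorem of calculus applied to the smooth curve $u\mapsto z^{(1-u)y+ux}(t)$, then bound the resulting integrand using Gronwall's inequality applied to the hypothesized estimate~\eqref{eq:bndDxz}.

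First I would fix $x,y\in\R^d$ and $t\in[0,T]$, set $x_u=(1-u)y+ux$ for $u\in[0,1]$, and observe that the hypothesis that $\R^d\ni v\mapsto z^v(t)\in\R^d$ is $C^1$ implies, via the chain rule, that $[0,1]\ni u\mapsto z^{x_u}(t)\in\R^d$ is $C^1$ with derivative $\bp{\tfrac\partial{\partial x}z^{x_u}(t)}(x-y)$. The fundamental theorem of calculus then gives
\beq
  z^x(t)-z^y(t)=\int_0^1 \bp{\tfrac{\partial}{\partial x}z^{x_u}(t)}(x-y)\,\diff u,
\eeq
so, by the triangle inequality,
\beq
\label{eq:planA}
  \norm{z^x(t)-z^y(t)}
  \leq\int_0^1\bbnorm{\bp{\tfrac\partial{\partial x}z^{x_u}(t)}(x-y)}\,\diff u.
\eeq

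Next I would verify that~\eqref{eq:bndDxz} can be applied with $x\is x_u$ and $h\is x-y$. For this it suffices to check that $[0,T]\ni s\mapsto \bp{\tfrac\partial{\partial x}z^{x_u}(s)}(x-y)\in\R^d$ is Borel measurable. This function is the pointwise limit as $\eps\to 0$ of the continuous (and thus Borel measurable) functions $s\mapsto \eps^{-1}\bp{z^{x_u+\eps(x-y)}(s)-z^{x_u}(s)}$, so it is Borel measurable as claimed. Consequently, the hypotheses of the lemma yield integrability on $[0,T]$ and
\beq
\label{eq:planB}
  \bbnorm{\bp{\tfrac\partial{\partial x}z^{x_u}(s)}(x-y)}
  \leq \norm{x-y}+\int_0^s \varphi(z^{x_u}(r))\,\bbnorm{\bp{\tfrac\partial{\partial x}z^{x_u}(r)}(x-y)}\,\diff r
\eeq
for all $s\in[0,T]$.

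Then I would apply Gronwall's integral inequality (e.g., Grohs et al.~\cite[Lemma~2.11]{GHJW}) to~\eqref{eq:planB}, using that $\varphi\circ z^{x_u}$ is continuous on $[0,T]$ and hence bounded, to conclude that for all $s\in[0,T]$ it holds that
\beq
  \bbnorm{\bp{\tfrac\partial{\partial x}z^{x_u}(s)}(x-y)}
  \leq \norm{x-y}\exp\bbp{\int_0^s \varphi(z^{x_u}(r))\,\diff r}
  \leq \norm{x-y}\exp\bbp{T\bbr{\sup_{r\in[0,T]}\varphi(z^{x_u}(r))}}.
\eeq
Plugging this bound into~\eqref{eq:planA} at $s=t$ and using $\int_0^1 f(u)\,\diff u\leq \sup_{u\in[0,1]}f(u)$ for nonnegative $f$ would complete the proof.

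I do not foresee a serious obstacle: the measurability verification is the only slightly subtle point (needed to invoke~\eqref{eq:bndDxz}), and it is handled routinely by writing the derivative as a pointwise limit of continuous difference quotients. Everything else is a direct application of the fundamental theorem of calculus and Gronwall's inequality.
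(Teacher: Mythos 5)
Your proposal is correct and follows essentially the same route as the paper's proof: the fundamental theorem of calculus along the segment $u\mapsto z^{(1-u)y+ux}(t)$, verification that $s\mapsto\bp{\tfrac{\partial}{\partial x}z^{x_u}(s)}(x-y)$ is Borel measurable via pointwise limits of continuous difference quotients (this is exactly the paper's Lemma~\ref{lem:diffmeas}), and then Gronwall's inequality applied to~\eqref{eq:bndDxz} after bounding $\varphi(z^{x_u}(\cdot))$ by its supremum. No gaps.
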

\begin{proof}[Proof of Lemma~\ref{lem:distlem}]
  Throughout this proof 
    let $D^x\colon [0,T]\to\R^{d\times d}$,
      $x\in\R^d$,
    \intrtype{be the functions which satisfy }%
    \intrtypen{satisfy }%
    for all
      $x\in\R^d$,
      $t\in[0,T]$
    that
      \beq
      D^x(t)=\tfrac{\partial}{\partial x}z^x(t)
      .
      \eeq
  Note that 
    the assumption that
      for all 
        $t\in[0,T]$
      it holds that 
        $(\R^d\ni x\mapsto z^x(t)\in\R^d)\in C^1(\R^d,\R^d)$
  ensures that for all
    $t\in[0,T]$,
    $h\in\R^d$
  it holds that
    $\R^d\ni x\mapsto D^x(t)h\in\R^{d}$ is a continuous function.
    The fundamental theorem of calculus
    hence
  implies that for all 
    $x,y\in\R^d$, 
    $t\in[0,T]$
  it holds that
  \ba
  \label{eq:dist}
    \norm{z^{x}(t)-z^{y}(t)}
    &=
    \bnorm{\bbr{ z^{(1-u)y+ux}(t) }_{u=0}^{u=1}}
    \\&=
    \bbbnorm{\int_0^1 D^{(1-u)y+ux}(t)(x-y)\,\diff u}
    \\&\leq
    \int_0^1 \norm{D^{(1-u)y+ux}(t)(x-y)}\,\diff u
    \\&\leq
    \sup_{u\in[0,1]}\norm{D^{(1-u)y+ux}(t)(x-y)}
    .
  \ea
  Moreover, observe that 
    the fact that 
      for all 
        $x\in\R^d$
      it holds that
        $z^x$ is a continuous function,
    the fact that
      for all
        $t\in[0,T]$
      it holds that
        $(\R^d\ni x\mapsto z^x(t)\in\R^d)\in C^1(\R^d,\R^d)$,
    and Lemma~\ref{lem:diffmeas}
    (with
      $d\is d$,
      $x\is x$,
      $h\is h$,
      $(\Omega,\mc F)\is([0,T],\mc B([0,T]))$,
      $(y^u)_{u\in\R^d}\is (z^u)_{u\in\R^d}$
      for
      $x,h\in\R^d$
    in the notation of Lemma~\ref{lem:diffmeas})
  show that
    for all
      $x,h\in\R^d$
    it holds that
    \beq
    \label{eq:zxtmeas}
      [0,T]\ni t\mapsto\bp{ \tfrac\partial{\partial x} z^x(t) }(h)\in\R^d
    \eeq
    is a $\mc B([0,T])$/$\mc B(\R^d)$-measurable function.
    This
    and the hypothesis that for all
      $x\in\R^d$,
      $h\in\bigl\{v\in \R^d \colon \bp{[0,T]\ni t\mapsto \bp{\tfrac{\partial}{\partial x} z^x(t) }(v)\in\R^d}\text{ is a $\mc B([0,T])$/$\mc B(\R^d)$-mea\-sur\-able function}\bigr\}$
    it holds that
      $\int_0^T \bnorm{\bp{ \tfrac{\partial}{\partial x} z^x(s) }(h)}\,\diff s<\infty$
  implies that 
    for all
      $x,h\in\R^d$
    it holds that
    \beq
    \label{eq:intDxtfin}
      \int_0^T \norm{D^x(s)h}\,\diff s<\infty
      .
    \eeq
  In addition, observe that
    the hypothesis that
      for all
        $w\in\R^d$
      it holds that
        $z^w$ is a continuous function
    and the hypothesis that 
      $\varphi$ is a continuous function
  ensure that for all
    $w\in\R^d$
  it holds that
  \beq
    \label{eq:suppathfin}
    \sup_{s\in[0,T]}\varphi(z^w(s))<\infty.
  \eeq
  This,
    \eqref{eq:bndDxz},
    and \eqref{eq:zxtmeas}
  ensure that for all
    $w,h\in\R^d$,
    $t\in[0,T]$
  it holds that
  \ba
  \label{eq:Dint}
    \norm{D^w(t)(h)}
    &\leq 
    \norm h+\int_0^t\varphi(z^w(s))\,\norm{D^w(s)h}\,\diff s\\
    &\leq 
    \norm h+\bbbbr{\sup_{s\in[0,T]}\varphi(z^w(s))}\int_0^t\norm{D^w(s)h}\,\diff s
    .
  \ea
  Combining
    this,
    \eqref{eq:intDxtfin},
    and \eqref{eq:suppathfin}
  with
    Gronwall's integral inequality
      (see, e.g., Grohs et al.~\cite[Lemma~2.11]{GHJW}
        (with
          $\alpha\is \norm h$,
          $\beta\is \sup_{s\in[0,T]}\varphi(z^w(s))$,
          $T\is T$,
          $f\is ([0,T]\ni s\mapsto \norm{D^w(s)h}\in[0,\infty))$
        for
        $w,h\in\R^d$
        in the notation of Grohs et al.~\cite[Lemma~2.11]{GHJW})%
      )
  shows that for all
    $w,h\in\R^d$,
    $t\in[0,T]$
  it holds that
  \beq
    \norm{D^w(t)h}
    \leq 
    \norm h\exp\bbbp{\bbbbr{\sup_{s\in[0,T]}\varphi(z^w(s))}t}
    \leq 
    \norm h\exp\bbbp{T\bbbbr{\sup_{s\in[0,T]}\varphi(z^w(s))}}
    .
  \eeq
  Combining 
    \com{a} this 
  with 
    \eqref{eq:dist}
  shows that for all
    $x,y\in\R^d$,
    $t\in[0,T]$
  it holds that
  \beq
    \norm{z^x(t)-z^y(t)}
    \ann{\leq}{a \eqref{eq:dist}}
    \sup_{u\in[0,1]} \bbbbr{\norm{x-y} \exp\bbbp{T\bbbbr{\sup_{s\in[0,T]}\varphi(z^{(1-u)y+ux}(s)) }}}
    .
  \eeq
  This completes the proof of Lemma~\ref{lem:distlem}.
\end{proof}

\subsection{Conditional sub-Hoelder continuity for SDEs}
\label{subsec:condsubhoelder}

\begin{lemma}
\label{lem:faux2}
  Let $q\in(0,\infty)$.
  Then it holds for all 
    $a,b\in[e^q,\infty)$ with $a\leq b$
  that
  \beq
  \label{eq:faux2conc}
  \frac{a^2}{\abs{\ln(a)}^{2q}}
  \leq
  \frac{b^2}{\abs{\ln(b)}^{2q}}
  .
  \eeq
\end{lemma}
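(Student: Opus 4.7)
The plan is to show that the function $f\colon[e^q,\infty)\to[0,\infty)$ defined by $f(x)=\tfrac{x^2}{|\ln(x)|^{2q}}$ is non-decreasing; the claim then follows immediately by applying $f$ to the inequality $a\leq b$.

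First I would observe that since $q\in(0,\infty)$, for every $x\in[e^q,\infty)$ we have $x\geq e^q>1$, so $\ln(x)\geq q>0$ and hence $|\ln(x)|=\ln(x)$. In particular $f$ can be rewritten on $[e^q,\infty)$ as $f(x)=x^2(\ln x)^{-2q}$, which is clearly differentiable there.

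Next I would differentiate: the product and chain rules yield, for all $x\in(e^q,\infty)$,
\begin{equation}
f'(x)=2x(\ln x)^{-2q}-2q\,x^2(\ln x)^{-2q-1}\tfrac{1}{x}=2x(\ln x)^{-2q-1}\bp{\ln(x)-q}.
\end{equation}
For every such $x$ the factor $2x(\ln x)^{-2q-1}$ is strictly positive (since $\ln(x)\geq q>0$), and the factor $\ln(x)-q$ is non-negative because $x\geq e^q$. Hence $f'(x)\geq 0$ on $(e^q,\infty)$, so the fundamental theorem of calculus gives that $f$ is non-decreasing on $[e^q,\infty)$, which establishes \eqref{eq:faux2conc}.

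No serious obstacle is expected here; the only care needed is to justify that $|\ln(x)|=\ln(x)$ on the relevant range so that the absolute-value bars can be removed before differentiating, and to check that the sign of $\ln(x)-q$ is controlled precisely by the hypothesis $a,b\geq e^q$. Both points are immediate from $q>0$.
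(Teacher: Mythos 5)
Your proposal is correct and follows essentially the same route as the paper: both compute the derivative of $z\mapsto z^2/\abs{\ln(z)}^{2q}$, observe that it equals $2z(\ln(z)-q)/(\ln(z))^{2q+1}\geq 0$ on $[e^q,\infty)$, and conclude monotonicity. Your added remark that $\abs{\ln(x)}=\ln(x)$ on the relevant range is a harmless extra justification, not a deviation.
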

\begin{proof}[Proof of Lemma~\ref{lem:faux2}]
  Throughout this proof
  let $f\colon (1,\infty)\to[0,\infty)$
    \intrtype{be the function which satisfies }%
    \intrtypen{satisfy }%
    for all 
      $z\in(1,\infty)$ 
    that
    \beq
      f(z)=\frac{z^2}{\abs{\ln(z)}^{2q}}
      .
    \eeq
  Note that 
    $f$ is a continuously differentiable function
    and for all 
      $z\in[e^q,\infty)$ 
    it holds that
    \beq
      f'(z)
      =
      \frac{2z[\ln(z)]^{2q}-2qz^2[\ln(z)]^{2q-1}z^{-1}}{[\ln(z)]^{4q}}
      =
      \frac{2z\ln(z)-2qz}{[\ln(z)]^{2q+1}}\geq 0
      .
    \eeq
  Hence, we obtain that 
    $f|_{[e^q,\infty)}$ is an increasing function.
  This establishes~\eqref{eq:faux2conc}.
  The proof of Lemma~\ref{lem:faux2} is thus completed.
\end{proof}

\begin{lemma}
\label{lem:faux}
  Let $q\in[0,\infty)$.
  Then it holds for all 
    $a,b\in[1,\infty)$ with $a\leq b$ 
  that
  \beq
  \label{eq:fauxconc}
    \frac{(e^qa)^2}{\abs{\ln(e^qa)}^{2q}}
    \leq
    \frac{(e^qb)^2}{\abs{\ln(e^qb)}^{2q}}.
  \eeq
\end{lemma}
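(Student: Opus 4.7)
The plan is to reduce Lemma~\ref{lem:faux} directly to Lemma~\ref{lem:faux2}, after handling the degenerate case $q=0$ separately. The key observation is that the map $z \mapsto e^q z$ sends $[1,\infty)$ into $[e^q,\infty)$ monotonically, so the claim is simply Lemma~\ref{lem:faux2} evaluated at the points $e^q a$ and $e^q b$.

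First I would dispatch the case $q=0$. In this case, $e^q = 1$ and $|\ln(e^q a)|^{2q} = |\ln(a)|^0 = 1$ (even when $a = 1$, by the convention $0^0 = 1$ implicit in $|\ln(e^0 \cdot 1)|^0$; more safely, both sides reduce to $a^2$ and $b^2$ respectively), and $a \leq b$ together with $a,b \in [1,\infty)$ yields $a^2 \leq b^2$.

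For the main case $q \in (0,\infty)$, I would set $\tilde a = e^q a$ and $\tilde b = e^q b$. Since $a,b \in [1,\infty)$, it follows that $\tilde a, \tilde b \in [e^q,\infty)$, and since $a \leq b$, it follows that $\tilde a \leq \tilde b$. Applying Lemma~\ref{lem:faux2} (with $a \is \tilde a$ and $b \is \tilde b$) immediately gives
\[
  \frac{\tilde a^2}{|\ln(\tilde a)|^{2q}} \leq \frac{\tilde b^2}{|\ln(\tilde b)|^{2q}},
\]
which is precisely \eqref{eq:fauxconc}.

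I expect no real obstacle here: the entire content of the lemma is already packaged in Lemma~\ref{lem:faux2}, and the multiplication by $e^q$ is just a rescaling that places the arguments into the regime $[e^q,\infty)$ where Lemma~\ref{lem:faux2} applies. The only mild subtlety is ensuring the $q=0$ case is not swept under the rug, since Lemma~\ref{lem:faux2} is stated for $q \in (0,\infty)$; but that case is trivial as indicated above.
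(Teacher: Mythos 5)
Your proposal is correct and follows essentially the same route as the paper: the paper also reduces to Lemma~\ref{lem:faux2} by substituting $a\is e^qa$, $b\is e^qb$, after assuming w.l.o.g.\ that $q>0$ (which is exactly your separate treatment of the trivial case $q=0$). No gaps.
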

\begin{proof}[Proof of Lemma~\ref{lem:faux}]
  Throughout this proof
    assume w.l.o.g.~that $q>0$.
  Observe that 
    Lemma~\ref{lem:faux2}
    (with
      $q\is q$,
      $a\is e^qa$,
      $b\is e^qb$
    in the notation of Lemma~\ref{lem:faux2})
  implies~\eqref{eq:fauxconc}.
  The proof of Lemma~\ref{lem:faux} is thus completed.
\end{proof}

\begin{lemma}
\label{lem:lem11}
  Let $d\in\N$, 
  $T,R,q,K,\mc K\in[0,\infty)$,
  $\varphi\in C(\R^d,[0,\infty))$,
  let $(\Omega,\mc F,\PP)$ be a probability space,
  let $\norm{\cdot}\colon\R^d\to[0,\infty)$ be a norm, 
  let $X^x\colon[0,T]\times\Omega\to\R^d$, $x\in\R^d$, be 
  stochastic processes with continuous sample paths
  which satisfy for all 
    $t\in[0,T]$,
    $\omega\in\Omega$
  that 
  $(\R^d\ni x\mapsto X^x(t,\omega)\in\R^d)\in C^1(\R^d,\R^d)$,
  assume for all 
    $x\in\R^d$,
    $t\in[0,T]$, 
    $\omega\in\Omega$,
    $h\in\bigl\{v\in\R^d\colon\bp{[0,T]\ni s\mapsto \bp{\tfrac{\partial}{\partial x} X^x(s,\omega) }(v)\in\R^d}\text{ is a $\mc B([0,T])$/$\mc B(\R^d)$-measurable function}\bigr\}$
  that
  $
  \int_0^T \bnorm{\bp{ \tfrac{\partial}{\partial x} X^x(s,\omega) }(h)}\,\diff s<\infty
  $ and
  \beq
  \label{eq:bndDxX}
    \bnorm{\bp{ \tfrac{\partial}{\partial x} X^x(t,\omega) }(h)}
    \leq 
    \norm h+\int_0^t\varphi(X^x(s,\omega))\,\bnorm{\bp{\tfrac\partial{\partial x}X^x(s,\omega)}(h)}\,\diff s
    ,
  \eeq
  assume that
    $\bEE{\sup_{x\in\{z\in\Q^d\colon\norm{z}\leq R+1\}}\sup_{t\in[0,T]\cap\Q}\bp{[\varphi(X^x(t))]^{4q+4}}}\leq K$,
  and assume for all
    $x\in\{z\in\R^d\colon\norm{z}\leq R+1\}$,
    $t\in[0,T]$
  that
    $
    \bEE{\norm{X^x(t)}^2}\leq K$
  and $\mc K=1+2^{4q+4}\bp{\abs{\ln(2+e^q)}^{4q+4}+T^{4q+4}K}$.
  Then it holds
  for all 
    $x\in\{z\in\R^d\colon \norm{z}\leq R\}$,
    $h\in\{v\in \R^d\setminus\{0\}\colon\norm v<1\}$,
    $t\in[0,T]$
  that
  \beq
    \bEE{\norm{X^{x+h}(t)-X^x(t)}}
    \leq 
    2\sqrt{(1+4K)\mc K}\,\babs{\ln(\norm h)}^{-q}
    .
  \eeq
\end{lemma}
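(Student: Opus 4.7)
\textbf{Proof plan for Lemma~\ref{lem:lem11}.} The plan is to combine the pathwise exponential estimate provided by Lemma~\ref{lem:distlem} with the polynomial moment hypothesis on $\varphi$ via a careful splitting of the expectation. First, for each $\omega \in \Omega$ I invoke Lemma~\ref{lem:distlem} with $z^v = X^v(\cdot, \omega)$ and $\varphi \is \varphi$, which is legitimate thanks to the assumed integrability and Gronwall-type inequality on $\bp{\tfrac{\partial}{\partial x} X^x(\cdot, \omega)}(h)$. Combined with the triangle inequality $\norm{x + (1-u)h} \leq R + 1$ for $\norm x \leq R$, $\norm h < 1$, $u \in [0, 1]$, this yields the pathwise bound
$\norm{X^{x+h}(t, \omega) - X^x(t, \omega)} \leq \norm h \, e^{T Z(\omega)}$
for $t \in [0, T]$, where $Z := \sup_{\norm{x'} \leq R+1} \sup_{s \in [0, T]} \varphi(X^{x'}(s))$. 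Continuity of $\varphi$ together with the $C^1$-in-$x$ hypothesis on $X$ (and continuous sample paths in $s$) implies that this sup equals the corresponding sup over rational arguments; Lemma~\ref{lem:supmeas} then makes $Z$ measurable with $\bEE{Z^{4q+4}} \leq K$.

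Set $Y := \norm{X^x - X^{x+h}}$, $A := \norm h \, e^{T Z}$, and $B := \norm{X^x} + \norm{X^{x+h}}$, so $Y \leq A$ and $Y \leq B$ pathwise, with $\bEE{B^2} \leq 2\bEE{\norm{X^x}^2} + 2\bEE{\norm{X^{x+h}}^2} \leq 4 K$. For a threshold $\theta \geq \norm h$ to be chosen, decompose
\begin{equation*}
  \bEE{Y} \leq \bEE{A\,\mathbbm{1}_{\{A \leq \theta\}}} + \bEE{B\,\mathbbm{1}_{\{A > \theta\}}} \leq \theta + \sqrt{\bEE{B^2}}\,\sqrt{\PP(A > \theta)}.
\end{equation*}
Since $\{A > \theta\} = \{Z > \ln(\theta/\norm h)/T\}$, Markov's inequality applied to $Z^{4q+4}$ gives $\PP(A > \theta) \leq K T^{4q+4} / (\ln(\theta/\norm h))^{4q+4}$, and hence
\begin{equation*}
  \bEE{Y} \leq \theta + \frac{2 K T^{2q+2}}{(\ln(\theta/\norm h))^{2q+2}}.
\end{equation*}

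The calibration I would use takes $\theta := |\ln\norm h|^{-q}$. In the regime where $|\ln\norm h|$ is sufficiently large (specifically, large enough that both $\theta \geq \norm h$ and $\ln(\theta/\norm h) = |\ln\norm h| - q \ln|\ln\norm h| \geq \tfrac{1}{2}|\ln\norm h| \geq \tfrac{1}{2}$ hold), this yields
$\bEE Y \leq |\ln\norm h|^{-q} + 2^{2q+3} K T^{2q+2}|\ln\norm h|^{-(2q+2)} \leq (1 + 2^{2q+3} K T^{2q+2})|\ln\norm h|^{-q}$.
In the complementary range, where $|\ln\norm h|$ is bounded by an explicit $q$-dependent constant $\leq c(q, T, K)$, the trivial bound $\bEE Y \leq \bEE B \leq 2\sqrt K$ suffices because $|\ln\norm h|^q \leq c(q, T, K)^q$ can be absorbed into the $|\ln(2+e^q)|^{4q+4}$ term present in $\mc K$, turning the trivial bound into $\bEE Y \cdot |\ln\norm h|^q \leq 2\sqrt{(1+4K)\mc K}$.

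It then remains to verify the algebraic inequality
$(1 + 2^{2q+3} K T^{2q+2})^2 \leq 4 (1 + 4 K)\mc K$,
which I would check by expanding the square and observing that the dominant quadratic term $2^{4q+6} K^2 T^{4q+4}$ is dominated by $4 \cdot 4 K \cdot 2^{4q+4} T^{4q+4} K = 2^{4q+8} K^2 T^{4q+4}$ contained in $4 (1+4K) \mc K$, while the remaining lower-order terms are absorbed by the $1$ and the $2^{4q+4}|\ln(2+e^q)|^{4q+4}$ contributions in $\mc K$.

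\emph{Main obstacle.} The principal technical difficulty is twofold. Conceptually, the naive choice $\theta \sim |\ln\norm h|/T$ makes $\norm h \, e^{T M}$ only of order $1$, so the $|\ln\norm h|^{-q}$ rate is lost; one must instead choose $\theta = |\ln\norm h|^{-q}$ and translate this back through $Z$ via Markov to gain the polynomial decay. Technically, fitting the result into the precise constant $\mc K$ requires a two-regime case split whose transition threshold must be covered by the $|\ln(2+e^q)|^{4q+4}$ contribution to $\mc K$; getting both regimes to align into the single estimate $2\sqrt{(1+4K)\mc K}|\ln\norm h|^{-q}$ is where most of the bookkeeping occurs.
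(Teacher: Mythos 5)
Your argument is correct, and it reaches the stated bound by a genuinely different route than the paper's proof. The paper factorizes $\norm{X^{x+h}(t)-X^x(t)}=G(\norm{X^{x+h}(t)-X^x(t)})\,F(\norm{X^{x+h}(t)-X^x(t)})$ with $F(y)=\ln(1+y)$ and $G(y)=y/\ln(1+y)$, applies Cauchy--Schwarz to this product, bounds $\EE[\abs{G}^2]\le 2+8K$ by the second-moment hypothesis, and bounds $\EE[\abs F^2]\le\EE[\abs{\ln(1+\norm{h}Z)}^2]$ (the paper's $Z$ is $\exp(TY)$, where its $Y$ is what you call $Z$) by splitting on $\{Z\le 1/\norm h\}$ -- where the monotonicity of $z\mapsto z^2/\abs{\ln(e^qz)}^{2q}$ from Lemma~\ref{lem:faux} extracts the factor $\babs{\ln(\norm h)}^{-2q}$ -- and on $\{Z>1/\norm h\}$, handled by a Chebyshev-type bound with $\abs{\ln Z}^{4q}$ and Cauchy--Schwarz with $\abs{\ln(2Z)}^{4}$; the constant $2\sqrt{(1+4K)\mc K}$ then comes out uniformly in $h$ with no case distinction. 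You instead truncate the distance itself at $\theta=\babs{\ln(\norm h)}^{-q}$ and estimate the tail by Cauchy--Schwarz plus Markov's inequality for the $(4q+4)$-th moment of the supremum of $\varphi$; this is more elementary (no logarithmic factorization, no Lemmas~\ref{lem:faux2} and~\ref{lem:faux}), at the price of the two-regime split in $\babs{\ln(\norm h)}$ needed to land on the prescribed constant. Both proofs rest on exactly the same inputs: the pathwise Gronwall bound $\norm{X^{x+h}(t,\omega)-X^x(t,\omega)}\le\norm h\exp\bp{T\sup_{\norm y\le R+1}\sup_{s\in[0,T]}\varphi(X^y(s,\omega))}$ from Lemma~\ref{lem:distlem} and the measurability/rational-reduction of the double supremum from Lemma~\ref{lem:supmeas}. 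The two steps you leave as bookkeeping do close: first, $(1+2^{2q+3}KT^{2q+2})^2\le 2+2^{4q+7}K^2T^{4q+4}\le 4+2^{4q+8}K^2T^{4q+4}\le 4(1+4K)\mc K$; second, your regime-one conditions hold whenever $\babs{\ln(\norm h)}\ge\Lambda(q):=\max\{e,\,4q\ln(2q)\}$ (using $\ln(2q)\le q$ and the monotonicity of $\lambda/\ln\lambda$ on $[e,\infty)$), while for $\babs{\ln(\norm h)}<\Lambda(q)$ the trivial bound gives $\bEE{\norm{X^{x+h}(t)-X^x(t)}}\,\babs{\ln(\norm h)}^{q}\le 2\sqrt K\,[\Lambda(q)]^q$, and $[\Lambda(q)]^q\le 2^{2q+3}\abs{\ln(2+e^q)}^{2q+2}$ (since $\ln(2+e^q)\ge\max\{1,q\}$), so this is at most $2\sqrt{4K\cdot 2^{4q+4}\abs{\ln(2+e^q)}^{4q+4}}\le 2\sqrt{(1+4K)\mc K}$, the case $K=0$ being trivial. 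So there is no gap, only finite bookkeeping that you correctly identified and that does go through. Two cosmetic remarks: the interpolation points in the application of Lemma~\ref{lem:distlem} are $x+uh$ rather than $x+(1-u)h$, and for $T=0$ you should not divide by $T$ in the Markov step -- there $A=\norm h\le\theta$, so the tail event is empty anyway.
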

\begin{proof}[Proof of Lemma~\ref{lem:lem11}]
Throughout this proof 
let 
  $F,\,G\colon [0,\infty)\to[0,\infty)$
  \intrtype{be the functions which }%
  satisfy for all 
    $y\in[0,\infty)$
  that
  \beq
  \label{eq:defphipsi}
    F(y)=\ln(1+y),
    \qquad
    G(y)=
    \begin{cases}
      0&\colon y=0\\
      [\ln(1+y)]^{-1}y &\colon y\neq 0,
    \end{cases}
  \eeq
let 
  $D^x\colon [0,T]\times\Omega\to\R^{d\times d}$, $x\in\R^d$,
  \intrtype{be the functions which satisfy }%
  \intrtypen{satisfy }%
  for all 
    $x\in\R^d$,
    $t\in[0,T]$, 
    $\omega\in\Omega$
  that
  \beq
  \label{eq:defD}
    D^x(t,\omega)= \tfrac{\partial}{\partial x} X^x(t,\omega)
    ,
  \eeq
let 
  $Y\colon\Omega\to[0,\infty]$ 
  \intrtype{be the function which satisfies }%
  \intrtypen{satisfy }%
  for all
    $\omega\in\Omega$
  that
  \beq
    Y(\omega)
    =
    \bbbbr{
    \sup_{x\in\{z\in\R^d\colon\norm{z}\leq R+1\}}\,
      \sup_{t\in[0,T]}
        \varphi(X^x(t,\omega))}
    ,
  \eeq
let 
  $A\subseteq\Omega$ be the set which satisfies
  \beq
  \label{eq:defA}
    A
    =
    \{\omega\in\Omega\colon Y(\omega)<\infty\}
    ,
  \eeq
and let 
  $Z\colon\Omega\to[0,\infty)$ 
  \intrtype{be the function which satisfies }%
  \intrtypen{satisfy }%
  for all 
    $\omega\in \Omega$ 
  that
  \beq
  \label{eq:defZ}
    Z(\omega)=
    \begin{cases}
      \exp(T\, Y(\omega))&\colon \omega\in A\\
      1 &\colon \omega\in\Omega\setminus A.
    \end{cases}
  \eeq
Note that 
  \eqref{eq:defphipsi} 
implies that for all 
  $y\in[0,\infty)$ 
it holds that
\beq
\label{eq:phipsiid}
  y=G(y)F(y)
  .
\eeq
Hence, we obtain that for all
  $x,h\in\R^d$,
  $t\in[0,T]$
it holds that
\beq
  \label{eq:splitFG}
  \bEE{\norm{X^{x+h}(t)-X^{x}(t)}}
  =
  \bEE{G(\norm{X^{x+h}(t)-X^{x}(t)}) F(\norm{X^{x+h}(t)-X^{x}(t)})}
  .
\eeq
Next observe that 
  the fundamental theorem of calculus
ensures that for all 
  $y\in[0,\infty)$ 
it holds that
\beq
  \ln(1+y)=\bbr{\ln(1+z)}_{z=0}^{z=y}=\int_0^y \frac{1}{1+z}\,\diff z
  \geq y\bbbbr{ \inf_{z\in[0,y]}\frac{1}{1+z} }=\frac{y}{1+y}
  .
\eeq
  This 
  and \eqref{eq:defphipsi} 
show that for all 
  $y\in[0,\infty)$ 
it holds that
\beq
  G(y)
  \leq 
  1+y
  .
\eeq
  \com{a} This,
  \com{b} the fact that $G$ is a $\mc B([0,\infty))$/$\mc B([0,\infty))$-measurable function,
  \com{c} the fact that 
    for all 
      $a,b\in\R$ 
    it holds that
      $\abs{a+b}^2\leq 2(\abs a^2+\abs b^2)$,
  and the triangle inequality
show that for all 
  $x,h\in\R^d$, 
  $t\in[0,T]$
it holds that
\ba
  \com{b}\bEE{ \babs{ G(\norm{X^{x+h}(t)-X^{x}(t)}) }^2 }
  &\ann{\leq}{a}
  \bEE{ \bp{ 1+ \norm{X^{x+h}(t)-X^{x}(t)} }^2 }\\
  &\ann{\leq}{c}
  \bEE{2\bp{ 1+ \norm{X^{x+h}(t)-X^{x}(t)}^2 }}\\
  &= 
  2\bp{ 1+\bEE{ \norm{ X^{x+h}(t)-X^{x}(t) }^2 } }
  \\&\leq
  2\bp{ 1+\bEE{ (\norm{ X^{x+h}(t) } + \norm{ X^{x}(t) })^2 } }\\
  &\ann{\leq}b
  2\bp{ 1+\bEE{ 2\bp{ \norm{ X^{x+h}(t) }^2 + \norm{ X^{x}(t) }^2 } } }\\
  &= 
  2\bp{ 1+2\bp{\bEE{ \norm{X^{x+h}(t)}^2}+\bEE{\norm{X^{x}(t)}^2}} }
  .
\ea
  The hypothesis that
    for all
      $x\in\{z\in\R^d\colon\norm{z}\leq R+1\}$,
      $t\in[0,T]$
    it holds that
      $\EE[\norm{X^{x}(t)}^2]\leq K$
  hence
implies that for all 
  $x\in\{z\in \R^d\colon \norm z\leq R\}$, 
  $h\in\{v\in \R^d\colon \norm v<1\}$,
  $t\in[0,T]$
it holds that
\beq
\label{eq:bnd1}
  \bEE{ \babs{ G(\norm{X^{x+h}(t)-X^{x}(t)}) }^2 }
  \ann{\leq}{\eqref{eq:cKrmeas}}
  2(1+2(K+K))
  =
  2+8K
  .
\eeq
%
%
%
In the next step we note that
  \eqref{eq:bndDxX},
  the hypothesis that
    for all 
      $x\in\R^d$,
      $\omega\in\Omega$,
      $h\in\bigl\{v\in\R^d\colon\bp{[0,T]\ni s\mapsto \bp{\tfrac{\partial}{\partial x} X^x(s,\omega) }(v)\in\R^d}\text{ is a $\mc B([0,T])$/$\mc B(\R^d)$-measurable function}\bigr\}$
    it holds that
    $
    \int_0^T \bnorm{\bp{ \tfrac{\partial}{\partial x} X^x(s,\omega) }(h)}\,\diff s<\infty
    $, 
    and Lemma~\ref{lem:distlem}
    (with
      $d\is d$,
      $T\is T$,
      $\varphi\is\varphi$,
      $\norm{\cdot}\is\norm{\cdot}$,
      $(z^x)_{x\in\R^d}\is \bp{[0,T]\ni t\mapsto X^x(t,\omega)\in\R^d}{}_{x\in\R^d}$
      for
      $\omega\in\Omega$
    in the notation of Lemma~\ref{lem:distlem})
show that for all
  $x,h\in\R^d$,
  $t\in[0,T]$,
  $\omega\in \Omega$
it holds that
\beq
  \norm{X^{x+h}(t,\omega)-X^x(t,\omega)}
  \leq
  \sup_{u\in[0,1]}\bbbbr{\norm{h}\exp\bbbp{T\bbbbr{\sup_{s\in[0,T]}\varphi(X^{x+uh}(s,\omega))}}}
  .
\eeq
Therefore, we obtain that for all
  $x\in\{z\in\R^d\colon\norm z\leq R\}$,
  $h\in\{v\in\R^d\colon\norm v<1\}$,
  $t\in[0,T]$,
  $\omega\in A$
it holds that
\ba
  \label{eq:distbndZ}
  \norm{X^{x+h}(t,\omega)-X^x(t,\omega)}
  &\leq
  \sup_{y\in\{z\in\R^d\colon\norm z\leq R+1\}}\bbbbr{\norm{h}\exp\bbbp{T\bbbbr{\sup_{s\in[0,T]}\varphi(X^{y}(s,\omega)}}}
  \\&=
  \norm{h}\exp(T\,Y(\omega))
  \\&=
  \norm h\,Z(\omega).
\ea
Furthermore, observe that
  Lemma~\ref{lem:supmeas}
  (with
    $d\is d$,
    $T\is T$,
    $R\is R+1$,
    $\norm\cdot\is\norm\cdot$,
    $(\Omega,\mc F,\PP)\is(\Omega,\mc F,\PP)$,
    $(Y^x)_{x\in\R^d}\is \bp{[0,T]\times\Omega\ni (t,\omega)\mapsto \varphi(X^x(t,\omega))\in[0,\infty)}{}_{x\in\R^d}$
  in the notation of Lemma~\ref{lem:supmeas})
ensures
\begin{enumerate}[label=(\alph{enumi})]
\item 
  that for all
    $\omega\in\Omega$
  it holds that
  \beq
  \label{eq:supRQ}
    Y(\omega)
    =
    \bbbbr{
    \sup_{x\in\{z\in\Q^d\colon\norm{z}\leq R+1\}}\,\sup_{t\in[0,T]\cap\Q}\varphi(X^x(t,\omega))
    }
  \eeq
  and
\item
  that $Y$
  is an $\mc F$/$\mc B([0,\infty])$-measurable function.
\end{enumerate}
Observe that
  \eqref{eq:defA}
  and the fact that $Y$ is an $\mc F$/$\mc B([0,\infty])$-measurable function
imply that
\beq
  \label{eq:AinF}
  A\in\mc F.
\eeq
Next note that
  the hypothesis that
    $\bEE{\sup_{x\in\{z\in\Q^d\colon\norm{z}\leq R+1\}}\sup_{t\in[0,T]\cap\Q}\bp{[\varphi(X^x(t))]^{4q+4}}}\leq K$,
  the fact that $Y$ is an $\mc F$/$\mc B([0,\infty])$-measurable function,
  and \eqref{eq:supRQ}
ensure that
\beq
  \EE[Y]
  \leq
  \bbbEE{\sup_{x\in\{z\in\Q^d\colon\norm{z}\leq R+1\}}\,\sup_{t\in[0,T]\cap\Q}\bp{1+[\varphi(X^x(t))]^{4q+4}} }
  \leq 
  1+K
  <\infty
  .
\eeq
Combining
  this
with
  \eqref{eq:AinF}
implies that
\beq
\label{eq:PA1}
  \PP(A)=1.
\eeq
Furthermore, note that
  \eqref{eq:defZ},
  \eqref{eq:AinF},
  and the fact that $Y$ is an $\mc F$/$\mc B([0,\infty])$-measurable function
show that
  $Z$ is an $\mc F$/$\mc B([0,\infty))$-measurable function.
Combining
  this,
  \eqref{eq:distbndZ},
  and \eqref{eq:PA1}
with
  \eqref{eq:defphipsi}
  and the fact that $F$ is a $\mc B([0,\infty))$/$\mc B([0,\infty))$-measurable function
demonstrates that for all
  $x\in\{z\in\R^d\colon \norm{z}\leq R\}$,
  $h\in\{v\in \R^d\colon\norm v<1\}$,
  $t\in[0,T]$
it holds that
\ba
\label{eq:bndphi4}
  \bEE{ \babs{F(\norm{X^{x+h}(t)-X^x(t)})}^2 }
  &\ann{=}{\eqref{eq:defphipsi}}
  \bEE{ \babs{\ln(1+\norm{X^{x+h}(t)-X^x(t)})}^2 }
  \\&\ann{\leq}{\eqref{eq:distbndZ}}
  \bEE{ \babs{\ln(1+\norm h Z)}^2}
  .
\ea
In the next step we observe that for all
  $\omega\in A$
it holds that
\beq
  [Y(\omega)]^{4q+4}
  =
  \bbbbr{\sup_{x\in\{z\in\Q^d\colon\norm{z}\leq R+1\}}\,\sup_{t\in[0,T]\cap\Q}\bp{[\varphi(X^x(t,\omega))]^{4q+4}}}
  .
\eeq
  The hypothesis that
    $\bEE{\sup_{x\in\{z\in\Q^d\colon\norm{z}\leq R+1\}}\sup_{t\in[0,T]\cap\Q}\bp{[\varphi(X^x(t))]^{4q+4}}}\leq K$
  and \eqref{eq:PA1}
  hence
show that
\beq
  \label{eq:powY}
  \bEE{Y^{4q+4}}
  \leq K.
\eeq
Moreover, note that
  the fact that 
    for all 
      $a,b\in\R$,
      $r\in[0,\infty)$
    it holds that
      $\abs{a+b}^r\leq 2^r(\abs a^r+\abs b^r)$
ensures that for all
  $C\in[1,\infty)$,
  $r\in[0,4q+4]$,
  $\omega\in A$
it holds that
\ba
\abs{\ln(CZ(\omega))}^r
&\leq
1+\abs{\ln(CZ(\omega))}^{4q+4}
\\&=
1+\abs{\ln(C)+\ln(Z(\omega))}^{4q+4}
\\&=
1+\abs{\ln(C)+T\, Y(\omega)}^{4q+4}
\\&\leq
1+2^{4q+4}\bp{\abs{\ln(C)}^{4q+4}+T^{4q+4}[Y(\omega)]^{4q+4}}
.
\ea
Combining
  this
  and the fact that $Z$ is an $\mc F$/$\mc B([0,\infty))$-measurable function
with
  \eqref{eq:PA1}
  and \eqref{eq:powY}
proves that for all
  $C\in[1,2+e^q]$,
  $r\in[0,4q+4]$
it holds that
\beq
\label{eq:lnCZ}
  \bEE{\abs{\ln(C Z)}^r}
  \leq 
  1+2^{4q+4}\bp{\abs{\ln(2+e^q)}^{4q+4}+T^{4q+4}K}
  =\mc K.
\eeq
Next note that
  the fact that $Z$ is an $\mc F$/$\mc B([0,\infty))$-measurable function,
  the fact that for all $\omega\in\Omega$ it holds that $Z(\omega)\geq 1$,
  and \com{b} the fact that 
    for all 
      $y\in[0,\infty)$ 
    it holds that
      $\ln(1+y)\leq y$
show that for all 
  $h\in\{v\in\R^d\setminus\{0\}\colon\norm v<1\}$
it holds that
\ba
  \bEE{ \abs{\ln(1+\norm h Z)}^2 \1_{\{Z\leq 1/\norm h\}} }
  &\ann{\leq}{a}
  \bEE{ \norm h^2 Z^2 \1_{\{Z\leq 1/\norm h\}} }\\
  &=
  \norm h^2 e^{-2q}\,\bEE{ (e^q Z)^2\1_{\{Z\leq 1/\norm h\}} }\\
  &=
  \norm h^2 e^{-2q}\,\bbbEE{ \frac{(e^qZ)^2}{\abs{\ln(e^qZ)}^{2q}}\abs{\ln(e^qZ)}^{2q}\1_{\{Z\leq 1/\norm h\}} }
  .
\ea
  The fact that 
    for all 
      $\omega\in\Omega$ 
    it holds that
      $Z(\omega)\geq 1$
  and Lemma~\ref{lem:faux}
  hence 
prove that for all 
  $h\in\{v\in\R^d\setminus\{0\}\colon\norm v<1\}$
it holds that
\ba
\label{eq:bndphi3}
  \bEE{ \abs{\ln(1+\norm h Z)}^2 \1_{\{Z\leq 1/\norm h\}} }
  &\leq 
  \norm h^2 e^{-2q}\,\bbbbEE{ \frac{\bp{\frac{e^q}{\norm{h}}}^2}{\babs{\ln\bp{\frac{e^q}{\norm{h}}}}^{2q}}\abs{\ln(e^qZ)}^{2q}\1_{\{Z\leq 1/\norm h\}} }\\
  &=
  \babs{\ln\bp{ \tfrac{e^q}{\norm h}}}^{-2q}\,\bEE{ \abs{\ln(e^qZ)}^{2q}\1_{\{Z\leq 1/\norm h\}} }\\
  &\leq 
  \babs{\ln\bp{ \tfrac{e^q}{\norm h}}}^{-2q}\,\bEE{ \abs{\ln(e^qZ)}^{2q} }
  \\&=
  \babs{q-\ln(\norm h)}^{-2q}\,\bEE{ \abs{\ln(e^qZ)}^{2q} }
  \\&=
  \bp{q+\babs{\ln(\norm h)}}^{-2q}\,\bEE{ \abs{\ln(e^qZ)}^{2q} }
  \\&\leq
  \babs{\ln(\norm h)}^{-2q}\,\bEE{ \abs{\ln(e^qZ)}^{2q} }
  .
\ea
  This 
  and \eqref{eq:lnCZ} (with $C\is e^q$, $r\is 2q$ in the notation of \eqref{eq:lnCZ})
establish that for all 
  $h\in\{v\in\R^d\setminus\{0\}\colon\norm v<1\}$
it holds that
\ba
\label{eq:bndphi7}
  \bEE{ \abs{\ln(1+\norm h Z)}^2 \1_{\{Z\leq 1/\norm h\}} }
  &\leq
  \mc K\,\babs{\ln(\norm h)}^{-2q}
  .
\ea
In the next step we observe that 
  \eqref{eq:lnCZ} (with $C\is 1$, $r\is 4q$ in the notation of \eqref{eq:lnCZ})
  and the fact that 
    for all
      $h\in\{v\in\R^d\setminus\{0\}\colon\norm v<1\}$,
      $\omega\in\bigl\{Z>\frac1{\norm h}\bigr\}$
    it holds that 
      $\abs{\ln(Z(\omega))}^{4q}\abs{\ln(\nicefrac{1}{\norm h})}^{-4q}\geq 1$
imply that for all
  $h\in\{v\in\R^d\setminus\{0\}\colon\norm v<1\}$ 
it holds that
\ba
\label{eq:bndphi6}
  \bEE{ \1_{\{Z>1/\norm h\}} }
  &\leq 
  \bEE{ \abs{\ln(Z)}^{4q}\abs{\ln(\nicefrac{1}{\norm h})}^{-4q}\1_{\{Z>1/\norm h\}} }\\
  &\leq 
  \bEE{ \abs{\ln(Z)}^{4q}\abs{\ln(\nicefrac{1}{\norm h})}^{-4q} }\\
  &=
  \bEE{ \abs{\ln(Z)}^{4q}}\babs{\ln(\norm h)}^{-4q}\\
  &\leq
  \mc K\,\babs{\ln(\norm h)}^{-4q}
  .
\ea
Furthermore, observe that 
  \eqref{eq:lnCZ} (with $C\is 2$, $r\is 4$ in the notation of \eqref{eq:lnCZ})
shows that
\beq
\label{eq:bndphi1}
  \bEE{ \abs{\ln(2 Z)}^4 }
  \leq
  \mc K
  .
\eeq
  This
  and \eqref{eq:bndphi6}
ensure that for all
  $h\in\{v\in\R^d\setminus\{0\}\colon\norm v<1\}$ 
it holds that
\beq
  \bEE{ \1_{\{Z>1/\norm h\}} }<\infty
  \qqandqq
  \bEE{ \abs{\ln(2 Z)}^4 }<\infty
  .
\eeq
Combining
  this,
  \eqref{eq:bndphi6},
  \eqref{eq:bndphi1},
  and the fact that for all $\omega\in\Omega$ it holds that $Z(\omega)\geq 1$
with
  the Cauchy-Schwarz inequality
establishes that for all 
  $h\in\{v\in\R^d\setminus\{0\}\colon \norm v<1\}$ 
it holds that
\ba
\label{eq:bndphi2}
  \bEE{ \abs{\ln(1+\norm h Z)}^2 \1_{\{Z>1/\norm h\}} }
  &\leq 
  \bEE{ \abs{\ln(Z+Z)}^2 \1_{\{Z>1/\norm h\}} }\\
  &\leq 
  \bbp{ \bEE{ \abs{\ln(2 Z)}^4 } \bEE{ \1_{\{Z>1/\norm h\}} } }^{\!\nicefrac12}\\
  &\leq 
  \bp{\mc K^2\,\babs{\ln(\norm h)}^{-4q}}^{\!\nicefrac12}
  \\&=
  \mc K\,\babs{\ln(\norm h)}^{-2q}
  .
\ea
Combining 
  this, 
  \eqref{eq:bndphi4}, 
  and \eqref{eq:bndphi7}
proves that for all
  $x\in\{z\in \R^d\colon\norm z\leq R\}$,
  $h\in\{v\in\R^d\setminus\{0\}\colon\norm v<1\}$,
  $t\in[0,T]$
it holds that
\ba
  \label{eq:bndphipart}
  &\bEE{ \babs{F(\norm{X^{x+h}(t)-X^x(t)})}^2 }\\
  &\leq 
  \bEE{ \abs{\ln(1+\norm h Z)}^2(\1_{\{Z\leq 1/\norm h\}}+\1_{\{Z> 1/\norm h\}})}\\
  &=
  \bEE{ \abs{\ln(1+\norm h Z)}^2\1_{\{Z\leq 1/\norm h\}}}
    +\bEE{ \abs{\ln(1+\norm h Z)}^2\1_{\{Z>1/\norm h\}}}\\
  &\leq 
  \mc K\,\babs{\ln(\norm h)}^{-2q}+\mc K\,\babs{\ln(\norm h)}^{-2q}
  \\&=
  2\mc K\,\babs{\ln(\norm h)}^{-2q}
  .
\ea
  This
  and \eqref{eq:bnd1}
ensure that for all
  $x\in\{z\in \R^d\colon\norm z\leq R\}$,
  $h\in\{v\in\R^d\setminus\{0\}\colon\norm v<1\}$, 
  $t\in[0,T]$
it holds that
\beq
\bEE{ \babs{ G(\norm{X^{x+h}(t)-X^{x}(t)}) }^2 }<\infty
\qqandqq
\bEE{ \babs{ F(\norm{X^{x+h}(t)-X^{x}(t)}) }^2 }<\infty.
\eeq
Combining
  \com{a} this
  and \com{b} the Cauchy-Schwarz inequality 
with 
  \eqref{eq:splitFG}
proves that for all
  $x\in\{z\in \R^d\colon\norm z\leq R\}$,
  $h\in\{v\in\R^d\setminus\{0\}\colon\norm v<1\}$,
  $t\in[0,T]$
it holds that
\beq
\label{eq:distphipsi}
  \bEE{\norm{X^{x+h}(t)-X^{x}(t)}}
  \leq
  \bbp{ \bEE{\babs{G(\norm{X^{x+h}(t)-X^{x}(t)})}^2} \bEE{\babs{F(\norm{X^{x+h}(t)-X^{x}(t)})}^2} }^{\!\nicefrac12}
  .
\eeq
%
%
  This,
  \eqref{eq:bnd1},
  and \eqref{eq:bndphipart}
prove that for all
  $x\in\{z\in\R^d\colon\norm z\leq R\}$,
  $h\in\{v\in\R^d\setminus\{0\}\colon\norm v<1\}$,
  $t\in[0,T]$
it holds that
\beq
  \bEE{\norm{X^{x+h}(t)-X^{x}(t)}}
  \leq
  \bp{(4+16K)\mc K\,\babs{\ln(\norm h)}^{-2q}}^{\nicefrac12}
  =
  2\sqrt{(1+4K)\mc K}\,\babs{\ln(\norm h)}^{-q}
  .
\eeq
This completes the proof of Lemma~\ref{lem:lem11}.
\end{proof}

\begin{lemma}
\label{lem:lem1}
  Let $d\in\N$, 
  $T,\kappa\in[0,\infty)$,
  let $(\Omega,\mc F,\PP)$ be a probability space,
  let $\norm{\cdot}\colon\R^d\to[0,\infty)$ be a norm, 
  let $X^x\colon[0,T]\times\Omega\to\R^d$, $x\in\R^d$, be 
  stochastic processes
  which satisfy for all
    $\omega\in\Omega$
  that 
  $([0,T]\times\R^d\ni (t,x)\mapsto X^x(t,\omega)\in\R^d)\in C^{0,1}([0,T]\times\R^d,\R^d)$,
  assume for all $R,r\in[0,\infty)$ that
  $\bEE{ \sup_{x\in\{z\in\Q^d\colon \norm z\leq R\}}\sup_{t\in[0,T]\cap\Q}\bp{\norm{X^x(t)}^{r}} }<\infty$,
  and assume for all 
    $x,h\in\R^d$,
    $t\in[0,T]$, 
    $\omega\in\Omega$
  that
  \beq
  \label{eq:diffXxbnd}
    \bnorm{\bp{ \tfrac{\partial}{\partial x} X^x(t,\omega) }(h)}
    \leq 
    \norm h+\kappa\int_0^t(1+\norm{X^x(s,\omega)}^\kappa)\,\bnorm{\bp{\tfrac\partial{\partial x}X^x(s,\omega)}(h)}\,\diff s
    .
  \eeq
  Then it holds for all
    $R,q\in[0,\infty)$
  that there exists $c\in[0,\infty)$
  such that
  for all 
    $h\in\{v\in \R^d\setminus\{0\}\colon\norm v<1\}$
  it holds that
  \beq
  \label{eq:lem1conc}
    \bbbbr{\sup_{x\in\{z\in\R^d\colon \norm{z}\leq R\}}\,\sup_{t\in[0,T]}
      \bEE{\norm{X^{x+h}(t)-X^x(t)}}}
    \leq 
    c\,\babs{\ln(\norm h)}^{-q}
    .
  \eeq
\end{lemma}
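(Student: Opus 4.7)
\medskip

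\noindent\textbf{Proof plan for Lemma~\ref{lem:lem1}.}

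My plan is to reduce Lemma~\ref{lem:lem1} to a direct application of Lemma~\ref{lem:lem11} with the polynomial choice $\varphi\colon\R^d\to[0,\infty)$ given by $\varphi(x)=\kappa(1+\norm{x}^\kappa)$. Under this choice, the hypothesis~\eqref{eq:diffXxbnd} of Lemma~\ref{lem:lem1} coincides precisely with the integral inequality~\eqref{eq:bndDxX} in Lemma~\ref{lem:lem11}. The only work to do is therefore to verify (i) the measurability/integrability assumption on the derivative process $t\mapsto \bp{\tfrac{\partial}{\partial x}X^x(t,\omega)}(h)$ and (ii) the two moment bounds (the $4q+4$-th moment of $\varphi(X^x(t))$ taken jointly in $x,t$, and the second moment of $\norm{X^x(t)}$) required to feed into the constants $K$ and $\mc K$.

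For~(i), the assumption that $([0,T]\times\R^d\ni(t,x)\mapsto X^x(t,\omega)\in\R^d)\in C^{0,1}([0,T]\times\R^d,\R^d)$ ensures that $\tfrac{\partial}{\partial x}X^x(t,\omega)$ is jointly continuous in $(t,x)$, so for every $\omega\in\Omega$ and every $h\in\R^d$ the map $[0,T]\ni t\mapsto\bp{\tfrac{\partial}{\partial x}X^x(t,\omega)}(h)\in\R^d$ is continuous, hence $\mc B([0,T])$/$\mc B(\R^d)$-measurable and bounded, which yields the required $\int_0^T\bnorm{\bp{\tfrac{\partial}{\partial x}X^x(s,\omega)}(h)}\,\diff s<\infty$ for every $h\in\R^d$. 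For~(ii), fix $R,q\in[0,\infty)$. The elementary inequality $(1+a^\kappa)^{4q+4}\leq 2^{4q+4}(1+a^{\kappa(4q+4)})$ for $a\in[0,\infty)$ together with the polynomial moment hypothesis (applied with radius $R+1$ and exponent $r=\kappa(4q+4)$) shows that
\beq
  \bbbEE{\sup_{x\in\{z\in\Q^d\colon\norm{z}\leq R+1\}}\,\sup_{t\in[0,T]\cap\Q}\bp{[\varphi(X^x(t))]^{4q+4}}}<\infty.
\eeq
The hypothesis with $r=2$ and radius $R+1$, combined with the continuity of $(t,x)\mapsto X^x(t,\omega)$ and Lemma~\ref{lem:supmeas}, bounds $\sup_{x\in\{z\in\R^d\colon\norm z\leq R+1\}}\sup_{t\in[0,T]}\bEE{\norm{X^x(t)}^2}$ by a finite quantity. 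Let $K\in[0,\infty)$ be the maximum of these two bounds and set $\mc K=1+2^{4q+4}\bp{\abs{\ln(2+e^q)}^{4q+4}+T^{4q+4}K}$.

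With these preparations, Lemma~\ref{lem:lem11} (applied with the present $R$, $q$, $\varphi$, $\norm\cdot$, $(X^x)_{x\in\R^d}$, $K$, $\mc K$) yields that for all $x\in\{z\in\R^d\colon \norm z\leq R\}$, all $h\in\{v\in\R^d\setminus\{0\}\colon\norm v<1\}$, and all $t\in[0,T]$,
\beq
  \bEE{\norm{X^{x+h}(t)-X^x(t)}}\leq 2\sqrt{(1+4K)\mc K}\,\babs{\ln(\norm h)}^{-q}.
\eeq
Since the right-hand side depends only on $R$, $q$, $T$, $\kappa$ (and not on $x$ or $t$), taking the supremum over $x\in\{z\in\R^d\colon\norm z\leq R\}$ and $t\in[0,T]$ on the left-hand side produces~\eqref{eq:lem1conc} with $c=2\sqrt{(1+4K)\mc K}$. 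The proof is essentially a packaging argument: all the analytic substance (the slowly-growing-norm-type estimate controlling the possibly non-integrable derivative processes) is already contained in Lemma~\ref{lem:lem11}, and the principal thing to be careful about is only the bookkeeping that converts the polynomial growth of $\mu'$ into the moment bound on $\varphi(X^x(t))^{4q+4}$ via the polynomial moment hypothesis.
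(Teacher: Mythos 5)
Your proposal is correct and follows essentially the same route as the paper: choose $\varphi(x)=\kappa(1+\norm{x}^\kappa)$, use the $C^{0,1}$ regularity to get measurability and pathwise integrability of $s\mapsto\bp{\tfrac{\partial}{\partial x}X^x(s,\omega)}(h)$, deduce the $(4q+4)$-moment bound on $\varphi(X^x(t))$ and the uniform second-moment bound from the polynomial moment hypothesis (via the elementary power inequality and Lemma~\ref{lem:supmeas}), and then invoke Lemma~\ref{lem:lem11} with $K$ the maximum of the two bounds and $\mc K=1+2^{4q+4}\bp{\abs{\ln(2+e^q)}^{4q+4}+T^{4q+4}K}$. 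The constant $c=2\sqrt{(1+4K)\mc K}$ and the final supremum step coincide with the paper's argument.
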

\begin{proof}[Proof of Lemma~\ref{lem:lem1}]
  Throughout this proof
  let $\varphi\colon\R^d\to[0,\infty)$ be the function which satisfies 
    for all
      $x\in\R^d$
    that
    \beq
    \label{eq:defphi}
      \varphi(x)=\kappa(1+\norm{x}^\kappa)
    \eeq
  and let $K_{R,q}\in[0,\infty]$, $R,q\in[0,\infty)$, satisfy
    for all $R,q\in[0,\infty)$ that
  \begin{multline}
  K_{R,q}=\max\biggl\{ 
    \bbbEE{\sup_{x\in\{z\in\Q^d\colon \norm{z}\leq R+1\}}\,\sup_{t\in[0,T]\cap\Q}
      \bp{[\varphi(X^x(t))]^{4q+4}}},\\
      \bbbEE{\sup_{x\in\{z\in\Q^d\colon \norm{z}\leq R+1\}}\,\sup_{t\in[0,T]\cap\Q}\bp{\norm{X^x(t)}^2}}
  \biggr\}
  .
  \end{multline}
  Note that
    \eqref{eq:defphi}
    and Lemma~\ref{lem:powsum}
  show that for all
    $q\in[0,\infty)$,
    $x\in\R^d$,
    $t\in[0,T]$,
    $\omega\in\Omega$
  it holds that
  \ba
  \label{eq:phipow}
    [\varphi(X^x(t,\omega))]^{4q+4}
    &=
    \bbr{\kappa+\kappa\norm{X^x(t,\omega)}^\kappa}^{4q+4}
    \\&\leq
    2^{4q+3}\bp{\kappa^{4q+4}+\kappa^{4q+4}\norm{X^x(t,\omega)}^{\kappa(4q+4)}}
    .
  \ea
  Furthermore, observe that 
    the hypothesis that 
      for all
        $\omega\in\Omega$
      it holds that
        $[0,T]\times\R^d\ni(t,x)\mapsto X^x(t,\omega)\in\R^d$ is a continuous function
  ensures that for all
    $R,q\in[0,\infty)$,
    $\omega\in\Omega$
  it holds that
  \beq
    \bbbbr{\sup_{x\in\{z\in\R^d\colon \norm{z}\leq R+1\}}\,\sup_{t\in[0,T]}
      \norm{X^x(t,\omega)}^{\kappa(4q+4)}}
    <\infty.
  \eeq
  Combining
    this
  with
    \eqref{eq:phipow}
    and the hypothesis that 
      for all 
        $R,r\in[0,\infty)$ 
      it holds that
      $\bEE{ \sup_{x\in\{z\in\Q^d\colon \norm z\leq R\}}\sup_{t\in[0,T]\cap\Q}\bp{\norm{X^x(t)}^{r}} }<\infty$
  demonstrates that for all
    $R,q\in[0,\infty)$
  it holds that 
  \ba
  \label{eq:Kfin11}
    &\bbbEE{\sup_{x\in\{z\in\Q^d\colon \norm{z}\leq R+1\}}\,\sup_{t\in[0,T]\cap\Q}
      \bp{[\varphi(X^x(t))]^{4q+4}}}
    \\&\leq
    2^{4q+3}\bbbp{
      \kappa^{4q+4}
      +
      \kappa^{4q+4}\,
      \bbbEE{
        \sup_{x\in\{z\in\Q^d\colon \norm{z}\leq R+1\}}\,\sup_{t\in[0,T]\cap\Q}
          \bp{\norm{X^x(t)}^{4\kappa(q+1)}}
      }
    }
    <\infty.
  \ea
    This
    and the hypothesis that
      for all 
        $R,r\in[0,\infty)$ 
      it holds that
        $\EE\bigl[ \sup_{x\in\{z\in\Q^d\colon \norm z\leq R\}}$
        $\sup_{t\in[0,T]\cap\Q}\bp{\norm{X^x(t)}^{r}} \bigr]<\infty$
  prove that for all
    $R,q\in[0,\infty)$
  it holds that
  \beq
  \label{eq:KRqfin}
    K_{R,q}
    <\infty
    .
  \eeq
  Furthermore, note that
    Lemma~\ref{lem:supmeas}
    (with
      $d\is d$,
      $T\is T$,
      $R\is R+1$,
      $\norm\cdot\is\norm\cdot$,
      $(\Omega,\mc F,\PP)\is(\Omega,\mc F,\PP)$,
      $(Y^x)_{x\in\R^d}\is\bp{[0,T]\times\Omega\ni(t,\omega)\mapsto \norm{X^x(t,\omega)}^2\in[0,\infty)}{}_{x\in\R^d}$
      for $R\in[0,\infty)$
    in the notation of Lemma~\ref{lem:supmeas})
  shows that for all
    $R\in[0,\infty)$,
    $\omega\in\Omega$
  it holds that
  \beq
    \bbbbr{
      \sup_{x\in\{z\in\R^d\colon \norm{z}\leq R+1\}}\,\sup_{t\in[0,T]}
      \bp{\norm{X^x(t,\omega)}^2}
    }
    =
    \bbbbr{
      \sup_{x\in\{z\in\Q^d\colon \norm{z}\leq R+1\}}\,\sup_{t\in[0,T]\cap\Q}
      \bp{\norm{X^x(t,\omega)}^2}
    }
    .
  \eeq
  Hence, we obtain that for all
    $R,q\in[0,\infty)$,
    $x\in\{z\in\R^d\colon \norm{z}\leq R+1\}$,
    $t\in[0,T]$
  it holds that
  \beq
  \label{eq:hyp2}
    \bEE{\norm{X^x(t)}^2}
    \leq
    \bbbEE{
      \sup_{y\in\{z\in\Q^d\colon \norm{z}\leq R+1\}}\,\sup_{s\in[0,T]\cap\Q}
      \bp{\norm{X^y(s)}^2}
    }
    \leq
    K_{R,q}.
  \eeq
  Moreover, observe that
    the hypothesis that for all
      $\omega\in\Omega$
    it holds that
      $([0,T]\times\R^d\ni(t,x)\mapsto X^x(t,\omega)\in\R^d)\in C^{0,1}([0,T]\times\R^d,\R^d)$
  implies that for all
    $x,h\in\R^d$,
    $\omega\in\Omega$
  it holds that
    $[0,T]\ni t\mapsto \bp{\tfrac\partial{\partial x}X^x(t,\omega)}(h)\in\R^d$
      is a continuous function.
  This implies that for all
    $x,h\in\R^d$,
    $\omega\in\Omega$
  it holds that
  \beq
  \label{eq:hyp1}
    \int_0^T \bnorm{\bp{ \tfrac{\partial}{\partial x} X^x(s,\omega) }(h)}\,\diff s<\infty
    .
  \eeq
  Furthermore, note that
    \eqref{eq:diffXxbnd}
    and \eqref{eq:defphi}
  demonstrate that for all
    $x,h\in\R^d$,
    $t\in[0,T]$,
    $\omega\in\Omega$
  it holds that
  \beq
     \bnorm{\bp{ \tfrac{\partial}{\partial x} X^x(t,\omega) }(h)}
     \leq 
     \norm h+\int_0^t\varphi(X^x(s,\omega))\,\bnorm{\bp{\tfrac\partial{\partial x}X^x(s,\omega)}(h)}\,\diff s
    .
  \eeq
  Combining 
    this,
    \eqref{eq:KRqfin},
    \eqref{eq:hyp2},
    and \eqref{eq:hyp1}
  with
    Lemma~\ref{lem:lem11}
    (with
      $d\is d$,
      $T\is T$,
      $R\is R$,
      $q\is q$,
      $K\is K_{R,q}$,
      $\mc K\is 1+2^{4q+4}(\abs{\ln(2+e^q)}^{4q+4}+T^{4q+4}K_{R,q})$,
      $\varphi\is\varphi$,
      $(\Omega,\mc F,\PP)\is(\Omega,\mc F,\PP)$,
      $\norm\cdot\is\norm\cdot$,
      $(X^x)_{x\in\R^d}\is(X^x)_{x\in\R^d}$
      for $R,q\in[0,\infty)$
    in the notation of Lemma~\ref{lem:lem11})
    hence
  proves that for all
    $R,q\in[0,\infty)$,
    $x\in\{z\in\R^d\colon \norm{z}\leq R\}$,
    $h\in\{v\in \R^d\setminus\{0\}\colon\norm v<1\}$,
    $t\in[0,T]$
  it holds that
  \ba
    &\bEE{\norm{X^{x+h}(t)-X^x(t)}}
    \\&\leq 
    2\sqrt{(1+4K_{R,q})\bp{1+2^{4q+4}\bp{\abs{\ln(2+e^q)}^{4q+4}+T^{4q+4}K_{R,q}}}}\,\babs{\ln(\norm h)}^{-q}
    .
  \ea
  This completes
  the proof of Lemma~\ref{lem:lem1}.
\end{proof}

\section{Regularity with respect to the initial value for SDEs}\label{section8}

In this section we establish in Theorem~\ref{thm:finalthm} in Subsection~\ref{subsec:subhoelder}
below the main result of this article. 
Theorem~\ref{thm:finalthm} proves that every additive noise driven 
SDE with a drift coefficient function whose derivatives grows at most polynomially 
and which also admits a Lyapunov-type condition 
(which ensures the existence of a unique solution) is at least logarithmically 
Hoelder continuous in the initial value (see \eqref{eq:finalsubhoelder} in 
Theorem~\ref{thm:finalthm}
below for the precise statement). 
Our proof of Theorem~\ref{thm:finalthm} exploits Corollary~\ref{cor:bigcor} in 
Subsection~\ref{subsec:subhoelderpre}
below and the auxiliary continuity-regularity result in Lemma~\ref{lem:lnhoelder} in 
Subsection~\ref{subsec:subhoelder} below. 
Our proof of Corollary~\ref{cor:bigcor} is based on an application of 
Proposition~\ref{prop:bigthm} below. Our proof of Proposition~\ref{prop:bigthm}, in turn, 
uses the regularity result in Lemma~\ref{lem:lem1} in Subsection~\ref{subsec:condsubhoelder} above.

\subsection{Regularity with respect to the initial value for SDEs with general noise}

\begin{prop}
  \label{prop:bigthm}
  Let $d,m\in\N$,
  $T,\kappa\in[0,\infty)$,
  $\mu\in C^1(\R^d,\R^d)$,
  $\sigma\in\R^{d\times m}$,
  $\varphi\in C(\R^m,[0,\infty))$,
  $V\in C^1(\R^d,[0,\infty))$,
  let $\norm{\cdot}\colon\R^d\to[0,\infty)$ be a norm,
  assume 
    for all 
      $x,h\in\R^d$, 
      $z\in\R^m$ 
    that
      $\norm{\mu'(x)h}\leq \kappa \bp{1+\norm x^\kappa} \norm h$,
      $V'(x)\mu(x+\sigma z)\leq \varphi(z)V(x)$,
      and $\norm x\leq V(x)$,
  let $(\Omega,\mc F,\PP)$ be a probability space,
  let $W\colon[0,T]\times\Omega\to\R^m$ be a stochastic process
    with continuous sample paths,
  and assume 
    for all
      $c\in[0,\infty)$ 
    that
      $\bEE{
        \sup_{t\in[0,T]\cap\Q}\exp\bp{c\,\varphi(W(t))}
      }+\bEE{
        \sup_{t\in[0,T]\cap\Q}(\norm{\sigma W(t)}^c)
      }
      <\infty$.
  Then
  \begin{enumerate}[label=(\roman{enumi}),ref=(\roman{enumi})]
  \item \label{enum:propbigthm:1}
    there exist unique stochastic processes
      $X^x\colon [0,T]\times\Omega\to\R^d$, $x\in\R^d$, with continuous sample paths
    which satisfy for all
      $x\in\R^d$,
      $t\in[0,T]$,
      $\omega\in\Omega$
    that
    \beq
      X^x(t,\omega)=x+\int_0^t\mu(X^x(s,\omega))\,\diff s+\sigma W(t,\omega),
    \eeq
  \item \label{enum:propbigthm:1b}
    it holds for all
      $R,r\in[0,\infty)$
    that
    $
      \sup_{x\in\{z\in\R^d\colon \norm{z}\leq R\}}\,\sup_{t\in[0,T]} \bEE{ \norm{X^x(t)}^{r} }<\infty
    $,
  and
  \item \label{enum:propbigthm:2}
    it holds for all 
      $R,q\in[0,\infty)$
      that there exists 
        $c\in(0,\infty)$ 
        such that for all 
          $h\in\{v\in\R^d\setminus\{0\}\colon\norm v<1\}$ 
        it holds that
        \beq
        \label{eq:lnbnd}
          \bbbbr{\sup_{x\in\{v\in\R^d\colon\norm v\leq R\}}\;\sup_{t\in[0,T]}\bEE{\norm{X^{x+h}(t)-X^x(t)}}}
          \leq 
          c\,\babs{\ln(\norm h)}^{-q}.
        \eeq
  \end{enumerate}
\end{prop}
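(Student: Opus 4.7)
The plan is to assemble the conclusion from the three principal auxiliary results already established: Lemma~\ref{lem:existX} (existence, uniqueness, and pathwise $C^{0,1}$-differentiability of $X^x$ in $x$), Lemma~\ref{lem:regcrit} (integrability of the solution uniformly in the initial value on bounded sets), and Lemma~\ref{lem:lem1} (conditional logarithmic continuity in the initial value, given suitable integrability and the derivative bound). No further genuinely new argument is required; the work is in checking that the hypotheses of these lemmas are met with our current data.

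For \ref{enum:propbigthm:1}, I would invoke Lemma~\ref{lem:existX} directly with the given $\mu$, $\sigma$, $\varphi$, $V$, $\norm\cdot$, and $W$, which is possible because the Lyapunov hypothesis $V'(x)\mu(x+\sigma z)\leq\varphi(z)V(x)$ and the domination $\norm x\leq V(x)$ are part of the standing assumptions. This simultaneously yields~\ref{enum:propbigthm:1}, the pathwise $C^{0,1}$-regularity of $\R^d\ni x\mapsto X^x(t,\omega)$, and the variational integral identity
\beq
\label{eq:plan:diff}
  \bp{\tfrac{\partial}{\partial x} X^x(t,\omega)}(h)
  =h+\int_0^t \mu'(X^x(s,\omega))\bp{\bp{\tfrac{\partial}{\partial x} X^x(s,\omega)}(h)}\,\diff s.
\eeq

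For \ref{enum:propbigthm:1b}, the natural step is to apply Lemma~\ref{lem:regcrit} with the same $\mu$, $\sigma$, $\varphi$, $V$, $\norm\cdot$, $W$, and the processes $X^x$ produced in \ref{enum:propbigthm:1}. The hypotheses of Lemma~\ref{lem:regcrit} (continuity of $\varphi$, the Lyapunov inequality, the norm domination by $V$, and the joint finiteness of $\bEE{\sup_{t\in[0,T]\cap\Q}\exp(c\,\varphi(W(t)))}+\bEE{\sup_{t\in[0,T]\cap\Q}\norm{\sigma W(t)}^c}$) match exactly what we assume here. The conclusion gives, for every $R,r\in[0,\infty)$, finiteness of $\bEE{\sup_{x\in\{z\in\R^d\colon\norm z\leq R\}}\sup_{t\in[0,T]}\bp{\norm{X^x(t)}^r}}$, from which \ref{enum:propbigthm:1b} follows by trivially pulling the sup out of the expectation.

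For \ref{enum:propbigthm:2}, the strategy is to apply Lemma~\ref{lem:lem1}. The $C^{0,1}$-regularity needed is from \ref{enum:propbigthm:1}, the integrability hypothesis of Lemma~\ref{lem:lem1} follows from the stronger estimate \ref{enum:propbigthm:1b} (upgrading sup-over-$\R^d$-and-$[0,T]$ to sup-over-$\Q^d$-and-$[0,T]\cap\Q$ is automatic), and the pointwise derivative bound
\beq
  \bnorm{\bp{\tfrac{\partial}{\partial x} X^x(t,\omega)}(h)}
  \leq \norm h+\kappa\int_0^t\bp{1+\norm{X^x(s,\omega)}^\kappa}\bnorm{\bp{\tfrac{\partial}{\partial x} X^x(s,\omega)}(h)}\,\diff s
\eeq
is obtained by taking norms in~\eqref{eq:plan:diff}, passing the norm inside the integral, and invoking the polynomial growth assumption $\norm{\mu'(y)k}\leq\kappa(1+\norm y^\kappa)\norm k$ on $\mu'$. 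Lemma~\ref{lem:lem1} then delivers, for each $R,q\in[0,\infty)$, a constant $c\in[0,\infty)$ for which~\eqref{eq:lnbnd} holds on $\{h\in\R^d\setminus\{0\}\colon\norm h<1\}$; replacing $c$ by $c+1$ makes it strictly positive if needed. The only subtle point in the whole proof is the bookkeeping of the two slightly different integrability statements (one over $\R^d\times[0,T]$, the other over $\Q^d\times([0,T]\cap\Q)$); this is merely a matter of monotonicity of the supremum and causes no real difficulty.
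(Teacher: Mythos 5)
Your proposal is correct and follows essentially the same route as the paper: Lemma~\ref{lem:existX} for item~(i), Lemma~\ref{lem:regcrit} for item~(ii), and Lemma~\ref{lem:lem1} for item~(iii), with the derivative bound obtained by taking norms in the variational identity and using the polynomial growth hypothesis on $\mu'$. One small clarification: what feeds the integrability hypothesis of Lemma~\ref{lem:lem1} is the expectation-of-supremum bound $\bEE{\sup_{x\in\{z\in\R^d\colon\norm z\leq R\}}\sup_{t\in[0,T]}(\norm{X^x(t)}^r)}<\infty$ from Lemma~\ref{lem:regcrit}, restricted to the countable set $\{z\in\Q^d\colon\norm z\leq R\}\times([0,T]\cap\Q)$, rather than item~(ii) itself (where the supremum sits outside the expectation) --- which is exactly what your parenthetical about the two suprema indicates, so no gap results.
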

\begin{proof}[Proof of Proposition~\ref{prop:bigthm}]
  First, observe that 
    Lemma~\ref{lem:existX} 
  shows
  \begin{enumerate}[label=(\alph{enumi})]
  \item 
    that there exist unique stochastic processes
      $X^x\colon [0,T]\times\Omega\to\R^d$, $x\in\R^d$, with continuous sample paths
    which satisfy for all
      $x\in\R^d$,
      $t\in[0,T]$,
      $\omega\in\Omega$
    that
    \beq
    \label{eq:Xex}
      X^x(t,\omega)=x+\int_0^t\mu(X^x(s,\omega))\,\diff s+\sigma W(t,\omega),
    \eeq
  \item
    that for all $\omega\in\Omega$ it holds that
    \beq
    \label{eq:XxC01}
      \bp{[0,T]\times\R^d\ni (t,x)\mapsto X^x(t,\omega)\in\R^d}
      \in C^{0,1}([0,T]\times\R^d,\R^d),
    \eeq
    and
  \item 
    that for all 
      $x,h\in\R^d$,
      $t\in[0,T]$, 
      $\omega\in\Omega$
    it holds that
    \beq
    \label{eq:Xdiff2}
      \bp{ \tfrac{\partial}{\partial x} X^x(t,\omega) }(h)
      =
      h+\int_0^t \mu'(X^x(s,\omega))\bp{\bp{\tfrac{\partial}{\partial x} X^x(s,\omega) }(h)}\,\diff s
      .
    \eeq
  \end{enumerate}
  In the next step we note that
    Lemma~\ref{lem:regcrit}
  ensures
  \begin{enumerate}[label=(\Alph{enumi}),ref=(\Alph{enumi})]
  \item
    that for all 
      $R,r\in[0,\infty)$ 
    it holds that
      \beq
      \Omega\ni\omega\mapsto \bbbbr{\sup_{x\in\{z\in\R^d\colon\norm z\leq R\}}\,\sup_{t\in[0,T]}(\norm{X^x(t,\omega)}^r)}\in[0,\infty]
      \eeq
        is an $\mc F$/$\mc B([0,\infty])$-measurable function
    and
  \item
    that for all 
      $R,r\in[0,\infty)$ 
    it holds that
    \beq
    \label{eq:supXfinexp}
      \bbbEE{ \sup_{x\in\{z\in\R^d\colon \norm{z}\leq R\}}\,\sup_{t\in[0,T]} \bp{\norm{X^x(t)}^{r}} }
      <\infty.
    \eeq
  \end{enumerate}
  Furthermore, observe that for all
    $R,r\in[0,\infty)$,
    $y\in\{z\in\R^d\colon \norm{z}\leq R\}$,
    $s\in[0,T]$
  it holds that
  \beq
    \bEE{\norm{X^y(s)}^r}
    \leq
    \bbbEE{ \sup_{x\in\{z\in\R^d\colon \norm{z}\leq R\}}\,\sup_{t\in[0,T]} \bp{\norm{X^x(t)}^{r}} }
    .
  \eeq
  Combining this with
    \eqref{eq:supXfinexp}
  establishes~\ref{enum:propbigthm:1b}.
  Moreover, note that
    \eqref{eq:supXfinexp}
    and Lemma~\ref{lem:supmeas}
  prove that for all
    $R,r\in[0,\infty)$
  it holds that
  \beq
    \label{eq:supsupfin2}
    \bbbEE{ \sup_{x\in\{z\in\Q^d\colon \norm{z}\leq R\}}\,\sup_{t\in[0,T]\cap\Q} \bp{\norm{X^x(t)}^{r}} }<\infty
    .
  \eeq
  In the next step we combine
    \eqref{eq:Xdiff2},
    the triangle inequality,
    and the hypothesis that
      for all
        $x,h\in\R^d$
      it holds that 
        $\norm{\mu'(x)h}\leq \kappa \bp{1+\norm x^\kappa} \norm h$
  to obtain that for all 
    $x,h\in\R^d$,
    $t\in[0,T]$, 
    $\omega\in\Omega$
  it holds that
  \ba
    \bnorm{\bp{ \tfrac{\partial}{\partial x} X^x(t,\omega) }(h)}
    &\leq
    \norm{h}+\bbbnorm{\int_0^t \mu'(X^x(s,\omega))\bp{\bp{\tfrac{\partial}{\partial x} X^x(s,\omega) }(h)}\,\diff s}\\
    &\leq
    \norm{h}+\int_0^t \bnorm{\mu'(X^x(s,\omega))\bp{\bp{\tfrac{\partial}{\partial x} X^x(s,\omega) }(h)}}\,\diff s\\
    &\leq
    \norm{h}+\int_0^t \kappa(1+\norm{X^x(s,\omega)}^\kappa)\,\bnorm{\bp{\tfrac{\partial}{\partial x} X^x(s,\omega) }(h)}\,\diff s\\
    &=
    \norm{h}+\kappa\int_0^t (1+\norm{X^x(s,\omega)}^\kappa)\,\bnorm{\bp{\tfrac{\partial}{\partial x} X^x(s,\omega) }(h)}\,\diff s
    .
  \ea
  Combining 
    this,
    \eqref{eq:XxC01},
    and \eqref{eq:supsupfin2}
  with
    Lemma~\ref{lem:lem1}
    (with
      $d\is d$,
      $T\is T$,
      $\kappa\is\kappa$,
      $(\Omega,\mc F,\PP)\is(\Omega,\mc F,\PP)$,
      $\norm\cdot\is\norm\cdot$,
      $(X^x)_{x\in\R^d}\is(X^x)_{x\in\R^d}$
    in the notation of Lemma~\ref{lem:lem1})
  establishes
    \ref{enum:propbigthm:2}.
  The proof of Proposition~\ref{prop:bigthm} is thus completed.
\end{proof}

\subsection{Regularity with respect to the initial value for SDEs with Wiener noise}
\label{subsec:subhoelderpre}

\begin{cor}
  \label{cor:bigcor}
  Let $d,m\in\N$, 
  $T,\kappa\in[0,\infty)$,
  $\alpha\in[0,2)$,
  $\mu\in C^1(\R^d,\R^d)$,
  $\sigma\in\R^{d\times m}$,
  $V\in C^1(\R^d,[0,\infty))$,
  let $\norm{\cdot}\colon\R^d\to[0,\infty)$ and $\nnorm{\cdot}\colon\R^m\to[0,\infty)$ be norms,
  assume for all 
    $x,h\in\R^d$, 
    $z\in\R^m$ 
  that
    $\norm{\mu'(x)h}\leq \kappa \bp{1+\norm x^\kappa} \norm h$,
    $V'(x)\mu(x+\sigma z)\leq \kappa(1+\nnorm{z}^\alpha)V(x)$,
    and $\norm x\leq V(x)$,
  let $(\Omega,\mc F,\PP)$ be a probability space,
  and let $W\colon[0,T]\times\Omega\to\R^m$ be a standard Brownian motion
    with continuous sample paths.
  Then
  \begin{enumerate}[label=(\roman{enumi}),ref=(\roman{enumi})]
  \item \label{enum:corbigcor:1}
  there exist unique stochastic processes
  $X^x\colon [0,T]\times\Omega\to\R^d$, $x\in\R^d$, with continuous sample paths
  such that for all 
    $x\in\R^d$,
    $t\in[0,T]$,
    $\omega\in\Omega$
  it holds that
  \beq
  X^x(t,\omega)=x+\int_0^t\mu(X^x(s,\omega))\,\diff s+\sigma W(t,\omega),
  \eeq
  \item \label{enum:corbigcor:1b}
    it holds for all
      $R,r\in[0,\infty)$
    that
    $
      \sup_{x\in\{z\in\R^d\colon \norm{z}\leq R\}}\,
        \sup_{t\in[0,T]} \bEE{ \norm{X^x(t)}^{r} }
      <\infty
    $,
    and
  \item \label{enum:corbigcor:2}
    it holds for all 
      $R,q\in[0,\infty)$ 
      that there exists 
        $c\in(0,\infty)$ 
      such that for all 
        $h\in\{v\in\R^d\setminus\{0\}\colon\norm v<1\}$ 
      it holds that
      \beq
        \bbbbr{
          \sup_{x\in\{v\in\R^d\colon\norm v\leq R\}}\;
            \sup_{t\in[0,T]}\bEE{\norm{X^{x+h}(t)-X^x(t)}}
        }
        \leq 
        c\,\babs{\ln(\norm h)}^{-q}.
      \eeq
  \end{enumerate}
\end{cor}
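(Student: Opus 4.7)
The plan is to derive Corollary~\ref{cor:bigcor} directly from Proposition~\ref{prop:bigthm} by choosing the function $\varphi\in C(\R^m,[0,\infty))$ appropriately and verifying the two integrability hypotheses on the driving noise. Concretely, I would set $\varphi\colon\R^m\to[0,\infty)$ by the rule $\varphi(z)=\kappa(1+\nnorm{z}^\alpha)$ for all $z\in\R^m$. With this choice the Lyapunov-type inequality $V'(x)\mu(x+\sigma z)\leq\kappa(1+\nnorm{z}^\alpha)V(x)$ assumed in Corollary~\ref{cor:bigcor} is exactly the Lyapunov-type inequality $V'(x)\mu(x+\sigma z)\leq\varphi(z)V(x)$ assumed in Proposition~\ref{prop:bigthm}. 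The polynomial growth assumption on $\mu'$, the norm bound $\norm{x}\leq V(x)$, the continuity of $\mu$ and $V$, and the regularity of the Brownian sample paths are already present in the hypotheses of Corollary~\ref{cor:bigcor}, so only the integrability hypothesis on $W$ remains to be checked.

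For this remaining step I would split the required bound into its two summands. For the exponential-in-$\varphi$ part, I would apply Lemma~\ref{lem:regcrithelp} (with $C\is \kappa$, $\alpha\is\alpha$, $\norm{\cdot}\is\nnorm{\cdot}$, and $\varphi\is\varphi$) to obtain, for every $c\in[0,\infty)$,
\beq
  \bbbEE{\sup_{t\in[0,T]\cap\Q}\exp\bp{c\,\varphi(W(t))}}<\infty.
\eeq
This is the place where the hypothesis $\alpha<2$ is essential, since Lemma~\ref{lem:regcrithelp} ultimately rests on the exponential integrability of $\sup_{t\in[0,T]}\abs{W(t)}^\alpha$ for $\alpha<2$ provided by Lemma~\ref{lem:expbrownaux}. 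For the polynomial-in-$\norm{\sigma W}$ part I would invoke Lemma~\ref{lem:expWr} directly to obtain, for every $c\in[0,\infty)$,
\beq
  \bbbEE{\sup_{t\in[0,T]\cap\Q}\bp{\norm{\sigma W(t)}^c}}<\infty.
\eeq
Adding these two bounds gives exactly the integrability hypothesis required by Proposition~\ref{prop:bigthm}.

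Once the hypotheses of Proposition~\ref{prop:bigthm} have been verified, its three conclusions translate one-to-one into conclusions~\ref{enum:corbigcor:1},~\ref{enum:corbigcor:1b}, and~\ref{enum:corbigcor:2} of Corollary~\ref{cor:bigcor}: existence and uniqueness of continuous pathwise solutions to the SDE, finiteness of $\sup_{\norm x\leq R}\sup_{t\in[0,T]}\bEE{\norm{X^x(t)}^r}$ for all $R,r\in[0,\infty)$, and the logarithmic modulus-of-continuity estimate in the initial value. There is no substantial obstacle in this reduction, as the corollary is essentially the specialization of the abstract Proposition~\ref{prop:bigthm} to Wiener noise; the only content is recognizing that the Brownian increments satisfy the abstract noise hypothesis, which is precisely what Lemmas~\ref{lem:regcrithelp} and~\ref{lem:expWr} have been built to deliver.
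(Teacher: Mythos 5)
Your proposal is correct and follows essentially the same route as the paper's own proof: it defines $\varphi(z)=\kappa(1+\nnorm{z}^\alpha)$, verifies the noise integrability hypothesis via Lemma~\ref{lem:regcrithelp} and Lemma~\ref{lem:expWr}, and then applies Proposition~\ref{prop:bigthm}.
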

\begin{proof}[Proof of Corollary~\ref{cor:bigcor}]
  Throughout this proof
    let $\varphi\colon\R^m\to[0,\infty)$ 
    \intrtype{be the function which satisfies }%
    \intrtypen{satisfy }%
    for all 
      $z\in\R^m$ 
    that
    \beq
      \varphi(z)=\kappa(1+\nnorm{z}^\alpha).
    \eeq
  Note that 
    Lemma~\ref{lem:regcrithelp}
    (with
      $m\is m$,
      $T\is T$,
      $C\is\kappa$,
      $\alpha\is \alpha$,
      $\norm\cdot\is \nnorm{\cdot}$,
      $(\Omega,\mc F,\PP)\is(\Omega,\mc F,\PP)$,
      $W\is W$,
      $\varphi\is\varphi$
    in the notation of Lemma~\ref{lem:regcrithelp})
  shows that for all
    $c\in[0,\infty)$
  it holds that
  \beq
  \label{eq:EexpW2}
    \bbbEE{\sup_{t\in[0,T]\cap\Q}\exp\bp{c\,\varphi(W(t))}}
    <\infty
    .
  \eeq
  In addition, observe that
    Lemma~\ref{lem:expWr}
    (with
      $d\is d$,
      $m\is m$,
      $T\is T$,
      $r\is c$,
      $\sigma\is\sigma$,
      $\norm\cdot\is\norm\cdot$,
      $(\Omega,\mc F,\PP)\is(\Omega,\mc F,\PP)$,
      $W\is W$
      for $c\in[0,\infty)$
    in the notation of Lemma~\ref{lem:expWr})
  ensures that for all
    $c\in[0,\infty)$
  it holds that
  \beq
    \bbbEE{\sup_{t\in[0,T]\cap\Q}(\norm{\sigma W(t)}^c)}
    <\infty
    .
  \eeq
  Combining
    this
  and
    \eqref{eq:EexpW2}
  with
    Proposition~\ref{prop:bigthm}
  establishes~\ref{enum:corbigcor:1}, \ref{enum:corbigcor:1b},
  and~\ref{enum:corbigcor:2}.
  The proof of Corollary~\ref{cor:bigcor} is thus completed.
\end{proof}

\subsection{Sub-Hoelder continuity with respect to the initial value for SDEs}
\label{subsec:subhoelder}

\begin{lemma}
\label{lem:lnhoelder}
  Let $d\in\N$,
  $T,R,q,c,C\in[0,\infty)$,
  let $(\Omega,\mc F,\PP)$ be a probability space,
  let $X^x\colon[0,T]\times\Omega\to\R^d$, $x\in\R^d$, be stochastic processes,
  let $\norm\cdot\colon\R^d\to[0,\infty)$ be a norm,
  assume 
    for all
      $h\in\{v\in\R^d\setminus\{0\}\colon\norm{v}<1\}$
    that
    \beq
    \label{eq:logcont1}
      \bbbbr{
        \sup_{x\in\{v\in\R^d\colon\norm v\leq R\}}
          \sup_{t\in[0,T]}
            \bEE{\norm{X^{x+h}(t)-X^x(t)}}
      }
      \leq 
      c\,\babs{\ln(\norm h)}^{-q}
      ,
    \eeq
  and assume that
    $
      C
      =
      \sup_{x\in\{v\in\R^d\colon \norm v\leq R\}}\sup_{t\in[0,T]}\bEE{\norm{X^x(t)}}
    $.
  Then it holds for all 
    $x,y\in\{v\in\R^d\colon\norm v\leq R\}$ with $0<\norm{x-y}\neq 1$
  that
  \beq
    \sup_{t\in[0,T]}\bEE{\norm{X^x(t)-X^y(t)}}
    \leq 
    \max\{c,2C\,\abs{\ln(2R+1)}^q\}\, \babs{\ln(\norm{x-y})}^{-q}
    .
  \eeq
\end{lemma}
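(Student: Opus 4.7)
The plan is to split into two cases according to whether $\norm{x-y}$ is less than or greater than $1$.

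\textbf{Case 1: $0 < \norm{x-y} < 1$.} I set $h = y - x$, so that $\norm h = \norm{x-y} \in (0,1)$ and $X^{x+h} = X^y$. Since $x \in \{v \in \R^d\colon \norm v \leq R\}$, the hypothesis~\eqref{eq:logcont1} directly yields
\[
  \sup_{t\in[0,T]} \bEE{\norm{X^y(t) - X^x(t)}}
  \leq c\,\babs{\ln(\norm{x-y})}^{-q}
  \leq \max\{c,\, 2C\,\abs{\ln(2R+1)}^q\}\,\babs{\ln(\norm{x-y})}^{-q}.
\]

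\textbf{Case 2: $1 < \norm{x-y} \leq 2R$.} The triangle inequality together with the definition of $C$ gives
\[
  \sup_{t\in[0,T]} \bEE{\norm{X^x(t) - X^y(t)}}
  \leq \sup_{t\in[0,T]} \bEE{\norm{X^x(t)}} + \sup_{t\in[0,T]} \bEE{\norm{X^y(t)}}
  \leq 2C.
\]
Moreover, since $1 < \norm{x-y} \leq 2R < 2R+1$, one has $0 < \ln(\norm{x-y}) < \ln(2R+1)$, so $\babs{\ln(\norm{x-y})}^q \leq \abs{\ln(2R+1)}^q$. Rearranging,
\[
  2C \leq 2C\,\abs{\ln(2R+1)}^q\,\babs{\ln(\norm{x-y})}^{-q}
  \leq \max\{c,\, 2C\,\abs{\ln(2R+1)}^q\}\,\babs{\ln(\norm{x-y})}^{-q}.
\]

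Combining the two cases completes the proof. The only mildly subtle point is the handling of Case 2, where one must observe that $\norm{x-y} \leq 2R$ forces $\ln(\norm{x-y}) \leq \ln(2R+1)$ and thus produces the second entry of the maximum; nothing deeper is required. (If $2R < 1$ then Case 2 is vacuous and only Case 1 applies.)
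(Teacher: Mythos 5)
Your proposal is correct and follows essentially the same route as the paper: the small-distance case is handled by applying \eqref{eq:logcont1} with increment $h=y-x$ (the paper uses $h=x-y$ with base point $y$, which is the same idea), and the case $\norm{x-y}>1$ is handled by the trivial bound $2C$ combined with $\norm{x-y}\leq 2R$ and the monotonicity of $[1,\infty)\ni z\mapsto\abs{\ln(z)}^q$. No gaps.
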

\begin{proof}[Proof of Lemma~\ref{lem:lnhoelder}]
  First, note that 
    \eqref{eq:logcont1}
  implies that for all
    $x,y\in\{v\in \R^d\colon\norm v\leq R\}$ with $x\neq y$ and $\norm{x-y}<1$
  it holds that
  \beq
    \label{eq:smalldistbnd}
    \sup_{t\in[0,T]}\bEE{\norm{X^x(t)-X^y(t)}}
    =
    \sup_{t\in[0,T]}\bEE{\norm{X^{y+(x-y)}(t)-X^y(t)}}
    \leq
    \frac{c}{\abs{\ln(\norm{x-y})}^{q}}.
  \eeq
  Furthermore, observe that 
    the triangle inequality
    and the hypothesis that $C<\infty$
  show that for all
    $x,y\in\{v\in \R^d\colon\norm v\leq R\}$,
    $t\in[0,T]$
  it holds that
  \beq
    \bEE{\norm{X^x(t)-X^y(t)}}
    \leq
    \bEE{\norm{X^x(t)}+\norm{X^y(t)}}
    =
    \bEE{\norm{X^x(t)}}+\bEE{\norm{X^y(t)}}
    \leq 
    2C
    .
  \eeq
    The fact that
      for all 
        $q\in[0,\infty)$
      it holds that
        $[1,\infty)\ni z\mapsto \abs{\ln(z)}^q\in\R$ is an increasing function
    and the fact that
      for all
        $x,y\in\{v\in \R^d\colon \norm v\leq R\}$
      it holds that
        $\norm{x-y}\leq 2R$
    hence
  show that for all
    $t\in[0,T]$,
    $x,y\in\{v\in \R^d\colon \norm v\leq R\}$ with $\norm{x-y}> 1$
  it holds that
  \beq
    \sup_{t\in[0,T]}\bEE{\norm{X^x(t)-X^y(t)}}
    \leq
    \frac{2C\,\abs{\ln(\norm{x-y})}^q}{\abs{\ln(\norm{x-y})}^q}
    \leq
    \frac{2C\,\abs{\ln(2R+1)}^q}{\abs{\ln(\norm{x-y})}^q}
    .
  \eeq
  Combining 
    this 
  with 
    \eqref{eq:smalldistbnd}
  demonstrates that for all
    $x,y\in\{v\in \R^d\colon \norm v\leq R\}$ with $0<\norm{x-y}\neq 1$
  it holds that
  \ba
    \sup_{t\in[0,T]}\bEE{\norm{X^x(t)-X^y(t)}}
    &\leq 
    \max\biggl\{ \frac{c}{\abs{\ln(\norm{x-y})}^{q}}, \frac{2C\,\abs{\ln(2R+1)}^q}{\abs{\ln(\norm{x-y})}^q} \biggr\}
    \\&=
    \max\{c,2C\,\abs{\ln(2R+1)}^q\}\, \babs{\ln(\norm{x-y})}^{-q}
    .
  \ea
  The proof of Lemma~\ref{lem:lnhoelder} is thus completed.
\end{proof}

\begin{theorem}
  \label{thm:finalthm}
  Let $d,m\in\N$, 
  $T,\kappa\in[0,\infty)$,
  $\alpha\in[0,2)$,
  $\mu\in C^1(\R^d,\R^d)$,
  $\sigma\in\R^{d\times m}$,
  $V\in C^1(\R^d,[0,\infty))$,
  let $\norm{\cdot}\colon\R^d\to[0,\infty)$ and $\nnorm\cdot\colon\R^m\to[0,\infty)$ be norms,
  assume for all 
    $x,h\in\R^d$, 
    $z\in\R^m$ 
  that
    $\norm{\mu'(x)h}\leq \kappa \bp{1+\norm x^\kappa} \norm h$,
    $V'(x)\mu(x+\sigma z)\leq \kappa(1+\nnorm{z}^\alpha)V(x)$,
    and $\norm x\leq V(x)$,
  let $(\Omega,\mc F,\PP)$ be a probability space,
  and let $W\colon[0,T]\times\Omega\to\R^m$ be a standard Brownian motion
    with continuous sample paths.
  Then
  \begin{enumerate}[label=(\roman{enumi}),ref=(\roman{enumi})]
  \item \label{enum:thmfinalthm:1}
    there exist unique stochastic processes
    $X^x\colon [0,T]\times\Omega\to\R^d$, $x\in\R^d$, with continuous sample paths
    such that for all 
      $x\in\R^d$,
      $t\in[0,T]$,
      $\omega\in\Omega$
    it holds that
    \beq
    X^x(t,\omega)=x+\int_0^t\mu(X^x(s,\omega))\,\diff s+\sigma W(t,\omega)
    \eeq
    and
  \item \label{enum:thmfinalthm:2}
    it holds for all 
      $R,q\in[0,\infty)$
      that there exists 
        $c\in(0,\infty)$ 
      such that for all 
        $x,y\in\{v\in \R^d\colon \norm v\leq R\}$ with $0<\norm{x-y}\neq 1$
      it holds that
      \beq
      \label{eq:finalsubhoelder}
        \sup_{t\in[0,T]}\bEE{\norm{X^x(t)-X^y(t)}}\leq c\,\babs{\ln(\norm{x-y})}^{-q}
        .
      \eeq
  \end{enumerate}
\end{theorem}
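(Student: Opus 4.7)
The proof of Theorem~\ref{thm:finalthm} is essentially a combination of the work already done in the preceding lemmas, so the plan is compact. First, I would invoke Corollary~\ref{cor:bigcor} directly: every hypothesis of Theorem~\ref{thm:finalthm} matches the hypotheses of Corollary~\ref{cor:bigcor} verbatim (the Lyapunov bound $V'(x)\mu(x+\sigma z)\leq \kappa(1+\nnorm{z}^\alpha)V(x)$, the polynomial growth $\norm{\mu'(x)h}\leq\kappa(1+\norm{x}^\kappa)\norm{h}$, and the control $\norm{x}\leq V(x)$). This immediately yields assertion~\ref{enum:thmfinalthm:1}, the existence of unique continuous-sample-path processes $X^x$ solving the integral equation. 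In addition, Corollary~\ref{cor:bigcor} supplies two further pieces of information needed for~\ref{enum:thmfinalthm:2}: first, that for all $R,r\in[0,\infty)$ it holds that
\[
\sup_{x\in\{v\in\R^d\colon\norm v\leq R\}}\,\sup_{t\in[0,T]}\bEE{\norm{X^x(t)}^r}<\infty
\]
(the case $r=1$ will suffice), and second, that for each pair $R,q\in[0,\infty)$ there exists $c\in(0,\infty)$ such that for all $h\in\R^d\setminus\{0\}$ with $\norm{h}<1$ one has
\[
\bbbbr{\sup_{x\in\{v\in\R^d\colon\norm v\leq R\}}\,\sup_{t\in[0,T]}\bEE{\norm{X^{x+h}(t)-X^x(t)}}}\leq c\,\babs{\ln(\norm h)}^{-q}.
\]

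Second, to upgrade this small-perturbation bound to an estimate valid for \emph{arbitrary} pairs $x,y$ in the closed ball of radius $R$ (including the case $\norm{x-y}>1$, which is the new feature of Theorem~\ref{thm:finalthm} relative to Corollary~\ref{cor:bigcor}), I would appeal to Lemma~\ref{lem:lnhoelder}. Fix $R,q\in[0,\infty)$, let $c$ be the constant furnished by Corollary~\ref{cor:bigcor}, and set
\[
C=\bbbbr{\sup_{x\in\{v\in\R^d\colon\norm v\leq R\}}\,\sup_{t\in[0,T]}\bEE{\norm{X^x(t)}}},
\]
which is finite by the $r=1$ case of the moment bound above. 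The logarithmic-continuity hypothesis of Lemma~\ref{lem:lnhoelder} is then exactly the small-$h$ bound from Corollary~\ref{cor:bigcor}, so Lemma~\ref{lem:lnhoelder} gives, for all $x,y\in\{v\in\R^d\colon\norm v\leq R\}$ with $0<\norm{x-y}\neq 1$,
\[
\sup_{t\in[0,T]}\bEE{\norm{X^x(t)-X^y(t)}}\leq \max\{c,\,2C\,\babs{\ln(2R+1)}^q\}\,\babs{\ln(\norm{x-y})}^{-q}.
\]
Taking the constant in~\eqref{eq:finalsubhoelder} to be this maximum completes the verification of~\ref{enum:thmfinalthm:2}.

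There is no genuine obstacle remaining at this stage. The real work was carried out earlier: in Lemma~\ref{lem:lem11}/Lemma~\ref{lem:lem1}, where the slowly-growing norm-type function $y\mapsto y/\ln(1+y)$ was used to sidestep the possibly infinite moments of the derivative processes and thereby convert the polynomial growth of $\mu'$ into a logarithmic modulus of continuity; in Lemma~\ref{lem:regcrit} (together with Lemma~\ref{lem:regcrithelp} and Lemma~\ref{lem:expWr}), which produced the required all-order moment bounds for $X^x$; and in Lemma~\ref{lem:existX}, which supplied the pathwise $C^{0,1}$-regularity and the linear variational equation for $\tfrac\partial{\partial x}X^x$. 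The contribution of the proof of Theorem~\ref{thm:finalthm} itself is thus simply to chain Corollary~\ref{cor:bigcor} with Lemma~\ref{lem:lnhoelder}, the latter handling the trivial but necessary extension from $\norm{x-y}<1$ to the entire regime $0<\norm{x-y}\neq 1$.
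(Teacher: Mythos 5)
Your proposal is correct and follows essentially the same route as the paper: the paper's proof of Theorem~\ref{thm:finalthm} likewise applies Corollary~\ref{cor:bigcor} to obtain item~(i), the $r$th moment bounds, and the small-increment logarithmic estimate, and then feeds these (with $C=\sup_{x\in\{v\in\R^d\colon\norm v\leq R\}}\sup_{t\in[0,T]}\bEE{\norm{X^x(t)}}$ and $c=c_{R,q}$) into Lemma~\ref{lem:lnhoelder} to cover all $x,y$ with $0<\norm{x-y}\neq 1$. No gaps.
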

\begin{proof}[Proof of Theorem~\ref{thm:finalthm}]
  Throughout this proof
    let $\varphi\colon\R^m\to[0,\infty)$ 
    \intrtype{be the function which satisfies }%
    \intrtypen{satisfy }%
    for all 
      $z\in\R^m$ 
    that
    \beq
      \varphi(z)=\kappa(1+\nnorm z^\alpha).
    \eeq
  Note that
    Corollary~\ref{cor:bigcor}
  establishes
  \begin{enumerate}[label=(\alph{enumi})]
  \item
  that there exist unique stochastic processes
  $X^x\colon [0,T]\times\Omega\to\R^d$, $x\in\R^d$, with continuous sample paths
  such that for all 
    $x\in\R^d$,
    $t\in[0,T]$,
    $\omega\in\Omega$
  it holds that
  \beq
  \label{eq:Xdef4}
    X^x(t,\omega)=x+\int_0^t\mu(X^x(s,\omega))\,\diff s+\sigma W(t,\omega),
  \eeq
  \item
    that for all
      $R,r\in[0,\infty)$
    it holds that
    \beq
    \label{eq:supsupE}
      \sup_{x\in\{z\in\R^d\colon \norm{z}\leq R\}}\,\sup_{t\in[0,T]} \bEE{ \norm{X^x(t)}^{r} }<\infty,
    \eeq
    and
  \item
    that there exist $c_{R,q}\in(0,\infty)$, $R,q\in[0,\infty)$,
      such that for all
        $R,q\in[0,\infty)$,
        $h\in\{v\in\R^d\setminus\{0\}\colon\norm v<1\}$ 
      it holds that
      \beq
      \label{eq:lnhoelder2}
        \sup_{x\in\{v\in\R^d\colon\norm v\leq R\}}\;\sup_{t\in[0,T]}\bEE{\norm{X^{x+h}(t)-X^x(t)}}\leq c_{R,q}\,\babs{\ln(\norm h)}^{-q}.
      \eeq
  \end{enumerate}
  Combining
    \eqref{eq:supsupE}
    and \eqref{eq:lnhoelder2}
  with
    Lemma~\ref{lem:lnhoelder}
    (with
      $d\is d$,
      $T\is T$,
      $R\is R$,
      $q\is q$,
      $c\is c_{R,q}$,
      $C\is \sup_{x\in\{z\in\R^d\colon \norm z\leq R\}}\sup_{t\in[0,T]}\bEE{\norm{X^x(t)} }$,
      $(\Omega,\mc F,\PP)\is(\Omega,\mc F,\PP)$,
      $(X^x)_{x\in\R^d}\is (X^x)_{x\in\R^d}$,
      $\norm\cdot\is\norm\cdot$
      for
      $R,q\in[0,\infty)$
    in the notation of Lemma~\ref{lem:lnhoelder})
  establishes~\ref{enum:thmfinalthm:2}.
  The proof of Theorem~\ref{thm:finalthm} is thus completed.
\end{proof}

\bibliographystyle{acm}
\bibliography{mgy}

\end{document}